\theoremstyle{plain} 
\newtheorem{thm}{Theorem}[section]
\newtheorem{cor}[thm]{Corollary} 
\newtheorem{lemma}[thm]{Lemma} 
\newtheorem{prop}[thm]{Proposition}
\theoremstyle{definition} 
\newtheorem{dfn}[thm]{Definition}
\newtheorem{rmk}[thm]{Remark} 
\newtheorem{example}[thm]{Example} 
\newtheorem*{str}{Strategy of the proofs}
\newtheorem*{rw}{Related works and motivations}
\newtheorem*{plan}{Plan of the paper}
\newtheorem*{ack}{Acknowledgements}
\DeclareMathOperator{\C}{\mathbb{C}}
\DeclareMathOperator{\Z}{\mathbb{Z}}
\DeclareMathOperator{\R}{\mathbb{R}}
\DeclareMathOperator{\Hom}{\text{Hom}}
\DeclareMathOperator{\A}{\mathcal{A}}
\DeclareMathOperator{\T}{\mathcal{T}}
\DeclareMathOperator{\D}{\mathrm{D}^b}
\DeclareMathOperator{\G}{\text{Gr}}
\DeclareMathOperator{\ch}{\text{ch}}
\DeclareMathOperator{\td}{\text{td}}
\def\Ku{\mathop{\mathrm{Ku}}\nolimits}
\DeclareMathOperator{\Coh}{\text{Coh}}
\def\P{\ensuremath{\mathbb{P}}}
\def\L{\ensuremath{\mathbb L}}
\def\Hom{\mathop{\mathrm{Hom}}\nolimits}
\def\Forg{\mathop{\mathrm{Forg}}\nolimits}
\DeclareMathOperator{\CCoh}{\emph{Coh}}
\def\AA{\ensuremath{\mathcal A}}
\def\BB{\ensuremath{\mathcal B}}
\def\DD{\ensuremath{\mathcal D}}
\def\EE{\ensuremath{\mathcal E}}
\def\GG{\ensuremath{\mathcal G}}
\def\HH{\ensuremath{\mathcal H}}
\def\II{\ensuremath{\mathcal I}}
\def\MM{\ensuremath{\mathcal M}}
\def\NN{\ensuremath{\mathcal N}}
\def\OO{\ensuremath{\mathcal O}}
\def\PP{\ensuremath{\mathcal P}}
\def\TT{\ensuremath{\mathcal T}}
\def\UU{\ensuremath{\mathcal U}}
\def\ZZ{\ensuremath{\mathcal Z}}
\def\ch{\mathop{\mathrm{ch}}\nolimits}
\def\rk{\mathop{\mathrm{rk}}\nolimits}
\title{Stability conditions on Kuznetsov components of Gushel--Mukai threefolds and Serre functor}
\author{Laura Pertusi, Ethan Robinett}
\date{\today}
\begin{document}
\maketitle

\begin{abstract}
We show that the stability conditions on the Kuznetsov component of a Gushel--Mukai threefold, constructed by Bayer, Lahoz, Macrì and Stellari, are preserved by the Serre functor, up to the action of the universal cover of $\text{GL}^+_2(\R)$. As application, we construct stability conditions on the Kuznetsov component of special Gushel--Mukai fourfolds.      
\end{abstract} 

\section{Introduction} The idea of a stability condition on a triangulated category $\mathcal{T}$ was introduced by Bridgeland in \cite{Bridgeland07}, where he also proved that the space of all such stability conditions admits a natural topology making it a complex manifold, thus defining a novel invariant of triangulated categories. The study of the properties of this stability manifold is difficult, even if we consider only $\mathcal{T}= \D(X)$ for a smooth projective variety $X$ over $\mathbb{C}$, which was the setting that originally motivated Bridgeland. It is, for instance, unknown if this space is nonempty for all $X$ of dimension $\geq 3$, and a complete description of the stability manifold is known only when $X$ is a curve \cite{Bridgeland07, Macri, Okada}.  For more results in this direction, we refer the reader to \cite{Bridgeland, Arcara_Bertram} in the case of surfaces, and in higher dimension to \cite{BMT, Macri14, BMSZ, BMS, Li, Liu} and the references contained therein. 
\\
\indent A related line of inquiry is the study of the stability manifold of certain triangulated subcategories of $\D(X)$. The most well-studied situation of this kind is when $\D(X)$ admits an exceptional collection $E_0, \cdot \cdot \cdot, E_m$ and the subcategory $\mathcal{T}$ is $\langle E_0, \cdot \cdot \cdot, E_m \rangle^\perp$. When $X$ is a cubic fourfold, $\D(X)$ admits an exceptional collection given by $\mathcal{O}_X, \mathcal{O}_X(1), \mathcal{O}_X(2)$, and the right orthogonal is called the \textit{Kuznetsov component}, denoted by $\textrm{Ku}(X)$. Kuznetsov showed in \cite{Kuz_cubicfourfold} that for many cubic fourfolds (notably, in each case $X$ was rational), $\textrm{Ku}(X)$ is equivalent to the derived category of a K3 surface. Afterwards, it was shown in \cite{BLMS} that $\textrm{Ku}(X)$ admits stability conditions, and that the corresponding moduli spaces of semistable complexes are smooth, projective hyperkahler varieties \cite{BLMNPS}.
\\
\indent The richness of these results motivates the study of analogous situations. In particular, the derived categories of Gushel--Mukai (GM) varieties are known to contain exceptional collections \cite{KP}, the Kuznetsov components of which have been shown to admit stability conditions in \cite{BLMS, PPZ}. See Section \ref{sec_GM} for a summary. 

In this paper, we consider the case when $X$ is a GM threefold. In \cite[Theorem 6.9]{BLMS} Bayer, Lahoz, Macrì and Stellari constructed a family of stability conditions on $\textrm{Ku}(X)$, which we denote by $\sigma(\alpha,\beta)$, depending on two real numbers $\alpha$, $\beta$ satisfying certain conditions (see Theorem \ref{thm_BLMSresult} for a more precise formulation). Our main result concerns the action of the Serre functor $S_{\Ku(X)}$ of $\Ku(X)$ on the stability conditions $\sigma(\alpha,\beta)$. Recall that there is a right action of the universal cover $\widetilde{\textrm{GL}}_2^+(\mathbb{R})$ of the group $\textrm{GL}_2^+(\mathbb{R})$ of real $2 \times 2$ matrices with positive determinant on the stability manifold. We show that $S_{\textrm{Ku}(X)}[-2]$ acts as the identity of $\widetilde{\textrm{GL}}_2^+(\mathbb{R})$ on the stability conditions in the same orbit of the stability conditions $\sigma(\alpha, \beta)$ with respect to the $\widetilde{\textrm{GL}}_2^+(\mathbb{R})$-action. 

\begin{thm}[Theorem \ref{thm_mainsec3}, Corollary \ref{cor_finalresult}, Corollary \ref{cor_stabonfourfold}]
\label{thm_main}
Let $X$ be Gushel--Mukai threefold. Let $\sigma$ be a stability condition on the Kuznetsov component $\Ku(X)$ which is in the same orbit of $\sigma(\alpha, \beta)$ with respect to the action of $\widetilde{\mathrm{GL}}^+_2(\R)$. Then 
$$S_{\Ku(X)}[-2] \cdot \sigma = \sigma.$$
\end{thm}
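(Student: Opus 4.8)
The plan is to separate the verification into the action on the central charge and the action on the heart, after reducing to the BLMS stability conditions themselves. Since the left action of autoequivalences of $\Ku(X)$ on $\Stab(\Ku(X))$ commutes with the right action of $\widetilde{\mathrm{GL}}^+_2(\R)$, it suffices to prove $S_{\Ku(X)}[-2]\cdot\sigma(\alpha,\beta)=\sigma(\alpha,\beta)$ for one representative of each orbit: if $\sigma=\sigma(\alpha,\beta)\cdot\tilde g$, then $S_{\Ku(X)}[-2]\cdot\sigma=(S_{\Ku(X)}[-2]\cdot\sigma(\alpha,\beta))\cdot\tilde g$. Recalling that an autoequivalence $\Phi$ acts by $\Phi\cdot(Z,\mathcal A)=(Z\circ\Phi_*^{-1},\Phi(\mathcal A))$, I must check that $S_{\Ku(X)}[-2]$ fixes the central charge $Z=Z_{\alpha,\beta}$ of Theorem \ref{thm_BLMSresult} and the heart $\mathcal A(\alpha,\beta)$.

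For the central charge I would first make the Serre functor explicit, applying the standard formula that expresses $S_{\Ku(X)}$ as a mutation of the Serre functor $(-\otimes\omega_X)[3]$ of $\D(X)$ through the exceptional objects $\mathcal O_X,\mathcal U_X^\vee$. The relevant output is the induced map on $\mathcal N(\Ku(X))\cong\mathbb Z^2$; as $[-2]$ acts trivially on $K$-theory, $(S_{\Ku(X)}[-2])_*=(S_{\Ku(X)})_*$, and Serre duality $\chi(a,b)=\chi(b,(S_{\Ku(X)})_* a)$ gives this map as $E^{-1}E^{T}$, with $E$ the Gram matrix of the Euler form. I would then check that $E$ is symmetric — equivalently, that $\tau:=S_{\Ku(X)}[-2]$ acts trivially on $\mathcal N(\Ku(X))$, as expected from the fact that $\tau$ is an involution (i.e. $S_{\Ku(X)}^2\cong[4]$). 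Hence $(S_{\Ku(X)}[-2])_*=\mathrm{id}$ and $Z\circ(S_{\Ku(X)}[-2])_*^{-1}=Z$. Since $Z$ is a real-linear isomorphism onto $\C$, this already forces $S_{\Ku(X)}[-2]\cdot\sigma(\alpha,\beta)$ and $\sigma(\alpha,\beta)$ to differ by at most an element of the kernel $\mathbb Z\cdot[2]$ of $\widetilde{\mathrm{GL}}^+_2(\R)\to\mathrm{GL}^+_2(\R)$, once one knows they lie in a common orbit.

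The \emph{main obstacle} is the heart: showing that $S_{\Ku(X)}[-2]$ preserves $\mathcal A(\alpha,\beta)$, equivalently that it sends $\sigma(\alpha,\beta)$-semistable objects to $\sigma(\alpha,\beta)$-semistable objects of the same phase. The difficulty is that the Serre functor is intrinsic to $\Ku(X)$ and is not a priori compatible with the tilt-stability weak stability conditions on $\D(X)$ from which $\mathcal A(\alpha,\beta)$ is built, so the heart cannot be tracked termwise. I would therefore argue by rigidity: using the explicit (semi)stable objects of small phase that generate the heart together with the mutation description of $S_{\Ku(X)}$, I would show that $S_{\Ku(X)}[-2]\cdot\sigma(\alpha,\beta)$ again satisfies the defining positivity of the BLMS construction and so lies in the distinguished orbit. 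Combined with the previous step this gives $S_{\Ku(X)}[-2]\cdot\sigma(\alpha,\beta)=\sigma(\alpha,\beta)\cdot[2k]$ for some $k\in\mathbb Z$.

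To force $k=0$ I would compare, for a single fixed $\sigma(\alpha,\beta)$-stable object $F\in\Ku(X)$, the phase of $F$ with that of $S_{\Ku(X)}[-2]\,F$: the central charges coincide by the second step, so the phases agree modulo $\mathbb Z$, and the residual integer is read off from the cohomological amplitude of $S_{\Ku(X)}[-2]\,F$, which a direct computation on $F$ shows to vanish. This yields the identity on $\sigma(\alpha,\beta)$, and hence on the whole orbit by the first step. Finally, the application to special GM fourfolds is immediate, since the BLMS inducing theorem constructs a stability condition on the Kuznetsov component of the fourfold out of precisely a stability condition on $\Ku(X)$ satisfying $S_{\Ku(X)}[-2]\cdot\sigma=\sigma$, which is exactly what has been established.
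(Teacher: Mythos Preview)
Your reduction to a single orbit representative is correct, and your central charge argument is fine: the Euler form on $\mathcal N(\Ku(X))$ is indeed symmetric (Gram matrix $\begin{pmatrix}-2&-3\\-3&-5\end{pmatrix}$), so $(S_{\Ku(X)}[-2])_*=\mathrm{id}$ and the central charge is fixed automatically. This is a clean observation that the paper does not make directly.

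The genuine gap is exactly where you flag it: the heart. Your proposed resolution --- ``argue by rigidity'' and show that $S_{\Ku(X)}[-2]\cdot\sigma(\alpha,\beta)$ ``satisfies the defining positivity of the BLMS construction'' --- is not a strategy, it is a hope. The BLMS stability conditions are produced by restricting a double tilt of slope stability from $\D(X)$; the image stability condition $S_{\Ku(X)}[-2]\cdot\sigma(\alpha,\beta)$ is already a stability condition on $\Ku(X)$, and there is no ``positivity'' to test against. What you actually need is to identify its heart with (a tilt of) some $\mathcal A(\alpha',\beta')$, and nothing in your outline addresses this. The paper's entire Sections~\ref{sec_stabonKu3}--\ref{sec_endofproof} are devoted to precisely this point: one writes $S_{\Ku(X)}^{-1}=(- \otimes \mathcal O_X(H))\circ \L_{\mathcal U_X}\circ \L_{\mathcal O_X}[-3]$, introduces the auxiliary components $\Ku(X)_1,\Ku(X)_2,\Ku(X)_3$, and tracks each factor separately, showing via careful tilt-stability estimates (and crucially enlarging the region of induced stability conditions below the parabola using Li's sharper Bogomolov inequality) that each mutation carries a BLMS heart to a tilt of a BLMS heart on the next component. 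The rank-$2$ bundle $\mathcal U_X$ is what forces this extra work compared to the cubic threefold case.

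Once Serre-invariance up to $\widetilde{\mathrm{GL}}^+_2(\R)$ is established, your final step and the paper's diverge but both work. You propose computing the phase of $S_{\Ku(X)}[-2]F$ for one explicit stable $F$; the paper instead uses that $S_{\Ku(X)}[-2]$ is an involution, so $\sigma=\sigma\cdot\tilde g^2$, forcing $\tilde g^2=\mathrm{id}$ in $\widetilde{\mathrm{GL}}^+_2(\R)$ and hence $\tilde g=\mathrm{id}$. The paper's argument is cleaner and object-free; yours would require exhibiting a concrete $F$ and controlling the cohomological amplitude of $\L_{\mathcal U_X}\L_{\mathcal O_X}(F\otimes\mathcal O_X(H))$, which is doable but redundant given the involution trick.
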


\noindent Theorem \ref{thm_main} shows that the stability conditions constructed in \cite{BLMS} are Serre-invariant in the sense of Definition \ref{def_serreinvariant}. As pointed out in Theorem \ref{thm_fanoevengenus}, an analogous result holds more generally for certain Fano threefolds of Picard rank $1$, index $1$ and even genus. In this case, we show that the Serre functor of their Kuznetsov component preserves the orbit of the stability conditions constructed in \cite{BLMS} with respect to the $\widetilde{\textrm{GL}}_2^+(\mathbb{R})$-action. 
 
As an application, we construct stability conditions on the Kuznetsov component of a special GM fourfold. Recall that a special GM fourfold $X$ is a double cover of a linear section of the Grassmannian $\text{Gr}(2, 5)$ ramified over an ordinary GM threefold $Z$. By \cite[Corollary 1.3]{KP_cyclic} there is an exact equivalence 
$$\Ku(Z)^{\mathbb{Z}/ 2\mathbb{Z}} \simeq \Ku(X),$$
where $\Ku(Z)^{\mathbb{Z}/ 2\mathbb{Z}}$ denotes the category of $\mathbb{Z}/ 2\mathbb{Z}$-equivariant objects of $\Ku(Z)$ and the $\mathbb{Z}/ 2\mathbb{Z}$-action on $\Ku(Z)$ is given by $S_{\Ku(Z)}[-2]$.
\begin{thm}[Corollary \ref{cor_stabonfourfold}, Remark \ref{rmk_stabonfourfold}] \label{thm_application}
Let $X$ be a special GM fourfold and $Z$ be its associated ordinary GM threefold. Serre-invariant stability conditions on $\Ku(Z)$ induce stability conditions on the equivariant category $\Ku(Z)^{\mathbb{Z}/2\mathbb{Z}}$. In particular, they define stability conditions on $\Ku(X)$. 
\end{thm}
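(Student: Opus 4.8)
The plan is to deduce the statement from the general machinery for inducing stability conditions on equivariant categories due to Macr\`i, Mehrotra and Stellari, using Theorem \ref{thm_main} to supply its one nontrivial hypothesis. Write $G = \mathbb{Z}/2\mathbb{Z}$ and let $\tau := S_{\Ku(Z)}[-2]$ be the autoequivalence generating the $G$-action on $\Ku(Z)$ appearing in the equivalence $\Ku(Z)^{G} \simeq \Ku(X)$ of \cite{KP_cyclic}. The induction theorem takes as input a stability condition $\sigma$ on $\Ku(Z)$ that is invariant under the $G$-action, i.e. $g \cdot \sigma = \sigma$ for every $g \in G$, and returns a stability condition $\sigma^{G}$ on the equivariant category $\Ku(Z)^{G}$, whose heart consists of the equivariant objects whose underlying object lies in the heart of $\sigma$ and whose central charge is obtained by composing that of $\sigma$ with the forgetful functor $\Forg \colon \Ku(Z)^{G} \to \Ku(Z)$.

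First I would record that the $G$-invariance hypothesis is automatic from the main result. Since $G = \langle \tau \rangle$ is cyclic, it suffices to verify $\tau \cdot \sigma = \sigma$, and this is exactly the content of Theorem \ref{thm_main}: any stability condition $\sigma$ in the $\widetilde{\mathrm{GL}}^+_2(\R)$-orbit of $\sigma(\alpha,\beta)$ satisfies $S_{\Ku(Z)}[-2] \cdot \sigma = \sigma$ on the nose, with $\tau$ acting as the identity element of $\widetilde{\mathrm{GL}}^+_2(\R)$ rather than merely preserving the orbit. Thus the Serre-invariance proved above is precisely the fixed-point condition needed to run the induction. If one starts from a $\sigma$ for which only orbit-invariance is known, one replaces it by the orbit representative singled out by Theorem \ref{thm_main}, for which the equality holds strictly; in either case the equivariant construction applies.

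Next I would apply the induction theorem to the datum $(\Ku(Z), G, \tau, \sigma)$ to produce $\sigma^{G}$ on $\Ku(Z)^{G}$, and then transport it along the equivalence $\Ku(Z)^{G} \simeq \Ku(X)$ of \cite{KP_cyclic}. Since any exact equivalence induces a homeomorphism of stability manifolds, the transport is formal and yields a stability condition on $\Ku(X)$. The two assertions of the statement—existence on $\Ku(Z)^{G}$ and existence on $\Ku(X)$—follow in this order.

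The main obstacle is not the invariance, which is handed to us by Theorem \ref{thm_main}, but verifying that the technical hypotheses of the induction theorem are genuinely satisfied here. Concretely, one must confirm that the $G$-action is an honest action by exact autoequivalences, including the coherence isomorphism $\tau^{2} \cong \id$ that makes $\Ku(Z)^{G}$ well defined; this is part of the construction of \cite{KP_cyclic} and may be taken as given. One must also use that we work in characteristic zero, so that $|G| = 2$ is invertible and the equivariantization is well behaved (Karoubian closure, compatibility of $\Forg$ with its adjoint). The most delicate point is that the numerical and local-finiteness conditions, and in particular the support property of $\sigma$, must descend to $\sigma^{G}$ with respect to the relevant lattice pulled back through $\Forg$, so that the induced datum is a genuine stability condition and not merely a weak pre-stability condition; this is exactly what the induction theorem guarantees once $G$-invariance is in place, so the argument ultimately reduces to checking these hypotheses and invoking it.
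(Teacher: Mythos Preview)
Your overall plan --- check strict $G$-invariance of $\sigma$, apply the Macr\`i--Mehrotra--Stellari induction \cite[Lemma~2.16]{MMS}, then transport along the Kuznetsov--Perry equivalence \eqref{eq_equivalence} --- is exactly the paper's. The gap is in how you obtain strict invariance.

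You invoke Theorem~\ref{thm_main} for the equality $S_{\Ku(Z)}[-2]\cdot\sigma=\sigma$ on the nose, but in the paper that equality is \emph{proved} inside Corollary~\ref{cor_stabonfourfold} itself (this is why Corollary~\ref{cor_stabonfourfold} is one of the references attached to Theorem~\ref{thm_main}). Prior to that point only the orbit statement $S_{\Ku(Z)}[-2]\cdot\sigma=\sigma\cdot\widetilde g$ of Theorem~\ref{thm_mainsec3} is available, so your citation is circular. More seriously, your fallback for a general Serre-invariant $\sigma$ --- ``replace it by the orbit representative singled out by Theorem~\ref{thm_main}'' --- does not work: if $\tau\cdot\sigma=\sigma\cdot\widetilde g$ then for any $\widetilde h$ one has $\tau\cdot(\sigma\cdot\widetilde h)=(\sigma\cdot\widetilde h)\cdot(\widetilde h^{-1}\widetilde g\,\widetilde h)$, so strict invariance of the new representative still forces $\widetilde g=1$. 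Passing to another point of the orbit cannot convert orbit-invariance into strict invariance, and a general Serre-invariant $\sigma$ is not known a priori to lie in the $\sigma(\alpha,\beta)$-orbit.

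What you are missing is the short argument the paper supplies and which Remark~\ref{rmk_stabonfourfold} singles out as the actual content: use that $\tau=S_{\Ku(Z)}[-2]$ is an involution. Applying $\tau$ twice gives $\sigma=\sigma\cdot\widetilde g^{\,2}$; since the image of $Z$ is not contained in a line, the $\widetilde{\mathrm{GL}}^+_2(\R)$-action is free and $\widetilde g^{\,2}=1$. Writing $\widetilde g=(g,M)$, the function $g$ is an increasing involution of $\R$, hence $g=\id$; then $M$ induces the identity on the circle of rays and satisfies $M^2=I$ with $\det M>0$, so $M=I$. This upgrades Serre-invariance to strict $G$-invariance for \emph{every} Serre-invariant $\sigma$, after which your steps~2 and~3 go through verbatim.
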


In Corollary \ref{cor_uniqueness} we show that there is a unique $\widetilde{\mathrm{GL}}^+_2(\R)$-orbit of Serre-invariant stability conditions on $\Ku(X)$ of a GM threefold $X$. 

\begin{rw}
In \cite[Proposition 5.7]{PY} it is shown that the stability conditions induced on the Kuznetsov component of a Fano threefold of Picard rank $1$ and index $2$ (e.g.\ a cubic threefold) with the method in \cite{BLMS} are Serre-invariant. Using this result, the authors further proved that non-empty moduli spaces of stable objects with respect to these stability conditions are smooth. They also gave another proof of the categorical Torelli Theorem for cubic threefolds in \cite[Theorem 5.17]{PY}, following the strategy in \cite[Theorem 1.1]{BMMS} where this result was proved for the first time (see also \cite{Soheyla} for a different approach). 

In fact, the property of Serre-invariance is very helpful in the study of the properties of moduli spaces and the stability of objects, see for instance \cite{JLLZ, LZ} for many recent applications. In \cite{FP} the notion of Serre-invariance is applied to show that the moduli space of stable Ulrich bundles of rank $d \geq 2$ on a cubic threefold is irreducible.

On the other hand, not all triangulated subcategories of the bounded derived category of a smooth projective variety admit Serre-invariant stability conditions. In the recent paper \cite{KP} the authors show that the Kuznetsov component (called residual category) of almost all Fano complete intersections of codimension $\geq 2$ does not admit Serre-invariant stability conditions.

In \cite[Theorem 1.1]{FP} a criterion is proved which ensures that a fractional Calabi--Yau category of dimension $\leq 2$ admits a unique Serre-invariant stability condition, up to
the action of $\widetilde{\mathrm{GL}}^+_2(\R)$. In Corollary \ref{cor_uniqueness} we show this criterion applies to the Kuznetsov component of a GM threefold. Note that this result was already known by \cite[Theorem 4.25]{JLLZ}. In particular, all known stability conditions on $\Ku(X)$ for $X$ a Fano threefold of Picard rank $1$, index $2$ or index $1$ and even genus $\geq 6$ are Serre-invariant. 

The next interesting question is to investigate whether the property of Serre-invariance characterize the stability conditions on $\Ku(X)$, providing a complete description of the stability manifold as in the case of curves \cite{Macri} (see Remark \ref{rmk_quest}). 

Stability conditions on the Kuznetsov component of a GM fourfold have been constructed in \cite{PPZ}. However, this existence is not shown through an explicit construction for special GM fourfolds, where it follows from the proof of the duality conjecture for GM varieties in \cite{KP_cones}. The stability conditions constructed in Theorem \ref{thm_application} are, to the authors' knowledge, the first explicit ones defined on special GM fourfolds. In the work in preparation \cite{PPZ_preparation}, Theorems \ref{thm_main} and \ref{thm_application} are useful to study properties (like non-emptyness) of moduli spaces of stable objects in $\Ku(X)$ of an ordinary GM threefold $X$, together with the results in \cite{PPZ} on moduli spaces on the associated special GM fourfold.  
\end{rw}

\begin{str}
Our proof of Theorem \ref{thm_main} is inspired by the approach used in \cite{PY} to address the corresponding question for cubic threefolds, although the situation in the case of GM threefolds is more complicated. Roughly speaking, the main issue is the presence of the rank two exceptional bundle $\UU_X$, which does not allow to use the same argument applied for cubic threefolds, where the exceptional objects were two line bundles. In fact, the inverse of the Serre functor decomposes as a composition of the left mutations with respect to $\OO_X$ and $\UU_X$, a twist by $\mathcal{O}_X (H)$ and a shift; in the case of cubic threefolds, we had a similar decomposition, but with only one left mutation with respect to the line bundle $\OO_X$. 

To overcome this problem, we first induce stability conditions $\sigma(s,q)$ on $\textrm{Ku}(X)$ from (a double tilt of) tilt stability conditions defined on $\D(X)$, via the criterion given in Proposition 5.1 of \cite{BLMS}, for pairs $(s,q)$ in a region that is slightly larger than the one considered in \cite{BLMS}, lying above the boundary given by Li in \cite{Li_Fano3} (see Propositions \ref{prop_Libound} and \ref{prop_stabcondKu}). Next, fixing one such stability condition, we study the action on it of the left mutation with respect to $\OO_X$ and then with respect to $\UU_X$. These left mutations induce equivalences between the right orthogonals of two distinct exceptional collections in $\D(X)$ and $\Ku(X)$. The Serre invariance of the considered stability condition follows from showing that its image via these equivalences is in the same $\widetilde{\mathrm{GL}}^+_2(\R)$-orbit of the induced stability conditions $\sigma(s,q)$. See Section \ref{sec_proof} for a more detailed summary of the argument.

The proof of Theorem \ref{thm_application} consists in showing that Serre-invariant stability conditions, for instance the induced stability conditions $\sigma(s,q)$, are invariant with respect to the $\mathbb{Z}/ 2\mathbb{Z}$-action on the Kuznetsov component of the GM threefold.
\end{str}

\begin{plan}
In Section \ref{sec_preliminary} we review the notions of (weak) stability conditions, GM varieties and their Kuznetsov components, and the construction of stability conditions in the case of GM threefolds. Section \ref{sec_actionSerrefunctor} is devoted to the proof of Theorem \ref{thm_main}. In Section \ref{sec_staboverLibound} we induce stability conditions on the Kuznetsov component of $X$ following the method in \cite{BLMS}, enlarging the region parametrizing them over Li's boundary, defined in \cite{Li_Fano3}. In Section \ref{sec_proof} we outline the proof of Theorem \ref{thm_main}, which will be performed in Sections \ref{sec_stabonKu3}, \ref{sec_stabonKu2}, \ref{sec_endofproof}. Section \ref{sec_applications} is devoted to the proof of Theorem \ref{thm_application} and Corollary \ref{cor_uniqueness}.  
\end{plan}

\begin{ack}
We would like to thank Xiaolei Zhao for many useful discussions and comments during the preparation of this work. We are grateful to Arend Bayer, Soheyla Feyzbakhsh, Chunyi Li, Emanuele Macrì, Paolo Stellari, Shizhuo Zhang for many interesting conversations. We wish to thank the referees for careful reading of the paper and for pointing out many inaccuracies.

L.P.\ is supported
by the national research project PRIN 2017 Moduli and Lie Theory. E.R.\ is partially supported by NSF FRG grant DMS-2052665.
\end{ack}

\section{Preliminaries on GM varieties and stability conditions} \label{sec_preliminary} 

In this section we review the definitions of (weak) stability conditions, Gushel--Mukai varieties and some basic properties of their Kuznetsov components. Then in the case of Gushel--Mukai threefolds, we recall the construction of stability conditions from \cite{BLMS}. We work over the field of complex numbers $\mathbb{C}$ throughout this paper.

\subsection{(Weak) stability conditions and tilting}
A (weak) stability condition on a triangulated category $\mathcal{T}$ is given by two pieces of data: a full subcategory $\mathcal{A} \subseteq \mathcal{T}$ called a heart of a bounded t-structure and a group homomorphism $Z: K(\mathcal{A}) \to \mathbb{C}$ called a (weak) stability function. We review these definitions now.
\begin{dfn}[\cite{Bridgeland07}, Lemma 3.2]
A \textit{heart of a bounded t-structure} is a full subcategory $\mathcal{A} \subseteq \mathcal{T}$ such that:
\begin{enumerate}
    \item For any $E,F \in \mathcal{A}$ and $k<0$, we have $\Hom(E,F [k])=0$.
    \item For any $E \in \mathcal{T}$, there is a filtration:
    \begin{align*}
        0=E_0 \xrightarrow{\phi_1} E_1 \xrightarrow{\phi_2} \cdot \cdot \cdot \xrightarrow{\phi_m} E_m = E
    \end{align*}
    such that for each $i$, $\textrm{Cone}(\phi_i) \cong A_i [k_i]$ for some $A_i \in \mathcal{A}$ and $k_1 >k_2 > \cdot \cdot \cdot > k_m$.
\end{enumerate}
\end{dfn}
\noindent A heart of a bounded t-structure is an abelian subcategory of $\mathcal{T}$ \cite{BBD}. We now define the (weak) stability functions mentioned above.
\begin{dfn}
Let $\mathcal{A}$ be an abelian category. A \textit{weak stability function} on $\mathcal{A}$ is a homomorphism of groups:
\begin{align*}
    Z \colon K(\mathcal{A}) &\to \mathbb{C} \\
    E &\mapsto \mathfrak{R}Z(E) + i \mathfrak{I}Z(E)
\end{align*}
where $K(\mathcal{A})$ denotes the Grothendieck group of $\mathcal{A}$, such that for all $0 \neq E \in \mathcal{A}$, we have $\mathfrak{I}Z(E) \geq 0$ and $\mathfrak{I}Z(E) = 0$ implies $\mathfrak{R}Z(E) \leq 0$. We say that $Z$ is a \textit{stability function} if, in addition, when $\mathfrak{I}Z(E) = 0$ we have $\mathfrak{R}Z(E) < 0$. 
\end{dfn}
We denote by $K(\TT)$ the Grothendieck group of $\TT$. Let $\Lambda$ be a finite rank lattice and $v \colon K(\mathcal{T}) \to \Lambda$ a surjective group homomorphism. 
\begin{dfn}
A \textit{weak stability condition} on $\mathcal{T}$ with respect to $\Lambda$ is a pair $\sigma = (\mathcal{A}, Z)$, where $\mathcal{A}$ is a heart of a bounded t-structure and $Z \colon \Lambda \to \mathbb{C}$ is a group homomorphism, such that:
\begin{enumerate}
    \item The composition $K(\mathcal{A})\cong K(\mathcal{T}) \xrightarrow{v} \Lambda \xrightarrow{Z} \mathbb{C}$ is a weak stability function on $\mathcal{A}$. We will omit the function $v$ and write $Z(E) = Z(v(E))$ for brevity. Given such a $Z$, we may define the slope of any $E \in \mathcal{A}$ as:
    \begin{align*}
       \mu_{\sigma}(E) = \begin{cases} 
      - \frac{\mathfrak{R}Z(E)}{\mathfrak{I}Z(E)} & \mathfrak{I}Z(E) \neq 0 \\
     +\infty & \textrm{otherwise.} 
   \end{cases} 
    \end{align*}
    We also obtain a notion of semistability (stability): we say $0 \neq E \in \mathcal{A}$ is $\sigma$-semistable ($\sigma$-stable) if for any nonzero, proper subobject $F \hookrightarrow E$, we have $\mu_{\sigma} (F) \leq \mu_{\sigma} (E)$ ($\mu_{\sigma} (F) < \mu_{\sigma} (E/F)$). 
    \item Any $E \in \mathcal{A}$ admits a Harder-Narasimhan filtration with $\sigma$-semistable factors. Explicitly, this means that given $E \in \mathcal{A}$, there is a filtration:
    \begin{align*}
         0=E_0 \xrightarrow{\phi_1} E_1 \xrightarrow{\phi_2} \cdot \cdot \cdot \xrightarrow{\phi_m} E_m = E
    \end{align*}
    such that $E_i/ E_{i-1}$ is $\sigma$-semistable, with $\mu_{\sigma} (E_1/E_0) > \cdot \cdot \cdot > \mu_{\sigma} (E_m/E_{m-1})$.
    \item (Support Property) There is a quadratic form $Q$ on $\Lambda \otimes \mathbb{R}$ such that $Q|_{\textrm{ker}Z}$ is negative-definite, and $Q(E) \geq 0$ for all $\sigma$-semistable $E \in \mathcal{A}$. 
\end{enumerate}
\end{dfn}
\begin{dfn}
A weak stability condition $\sigma = (\mathcal{A}, Z)$ on $\mathcal{T}$ with respect to $\Lambda$ is called a \textit{stability condition} if $Z$ is a stability function. 
\end{dfn}
Fix a (weak) stability condition $\sigma= (\mathcal{A}, Z)$ on $\mathcal{T}$. Given a semistable object $E \in \mathcal{A}$ with $Z(E) \neq 0$, we define the \textit{phase} of $E$ as
\begin{align*}
    \phi (E) = \frac{1}{\pi} \textrm{arg}(Z(E)).
\end{align*}
If $Z(E)=0$, we set $\phi(E) = 1$, and for any shift $E[n]$, we define $\phi(E[n])= \phi(E)+n$. The notion of phase of a semistable object $E \in \mathcal{A}$ naturally gives rise to a \textit{slicing} of $\mathcal{T}$.
\begin{dfn}
Let $\sigma$ be a (weak) stability condition on $\TT$. The \textit{slicing} of $\mathcal{T}$ associated to $\sigma$ is a collection $\mathcal{P}$ of full additive subcategories $\mathcal{P}(\phi)$ of $\mathcal{T}$ for each $\phi \in \mathbb{R}$ such that: 
\begin{enumerate}
    \item For $\phi \in (0,1]$, $\mathcal{P}(\phi)$ is the subcategory of all $\sigma$-semistable objects of phase $\phi$, together with the zero object.
    \item For $\phi \in (0,1]$ and $n \in \mathbb{Z}$, $\mathcal{P}(\phi +n)= \mathcal{P}(\phi)[n]$.
\end{enumerate}
\end{dfn}
We denote by $\PP(I)$ the extension-closed subcategory of $\TT$ generated by the subcategories $\PP(\phi)$ with $\phi \in I$, where $I \subset \R$ is an interval. Given a (weak) stability condition $\sigma$ with slicing $\mathcal{P}$, the heart $\mathcal{A}$ is recovered via $\mathcal{A}= \mathcal{P}((0,1])$, and conversely, a slicing $\mathcal{P}$ arises from $\sigma$ immediately by definition. In the next, we will use the notations $(\mathcal{A}, Z)$ and $(\mathcal{P},Z)$ for a (weak) stability condition interchangeably.

We write $\textrm{Stab}_{\Lambda}(\mathcal{T})$ to denote the set of stability conditions on $\mathcal{T}$. The space $\textrm{Stab}_{\Lambda}(\mathcal{T})$ can be given a metrizable topology in a natural way, and Bridgeland \cite{Bridgeland07} proved that with this topology, the map $\textrm{Stab}_{\Lambda} (\mathcal{T}) \to \textrm{Hom}(\Lambda, \mathbb{C})$ given by $(\mathcal{A}, Z) \mapsto Z$ is a local homeomorphism, hence $\textrm{Stab}(\mathcal{T})$ is a complex manifold of dimension $\textrm{rk}(\Lambda)$. 
\\
The manifold $\textrm{Stab}_{\Lambda}(\mathcal{T})$ admits two natural group actions, one from the universal cover of  $\textrm{GL}_2^+(\mathbb{R})$ (denoted $\widetilde{\textrm{GL}}_2^+(\mathbb{R})$) and one from the group $\textrm{Aut}_{\Lambda}(\mathcal{T})$ of exact autoequivalences which are compatible with $v$. For the former of these, given some $\widetilde{g}=(g,M) \in \widetilde{\textrm{GL}}_2^+(\mathbb{R})$ with $M \in \textrm{GL}_2^+(\mathbb{R})$ and $g: \mathbb{R} \to \mathbb{R}$ increasing with $g(\phi +1 )= g(\phi) +1$, the action on a stability condition $\sigma = (\mathcal{P}, Z)$ is given by $\sigma \cdot \widetilde{g} = (\mathcal{P}', M^{-1} \circ Z)$, where $\mathcal{P}'(\phi) = \mathcal{P}(g(\phi))$. For the $\textrm{Aut}_{\Lambda}(\mathcal{T})$-action, given some $\Phi \in \text{Aut}_{\Lambda}(\mathcal{T})$, we have $\Phi \cdot \sigma = (\Phi (\mathcal{P}), Z \circ \Phi_*^{-1})$, where $\Phi_*$ is the induced automorphism on $K(\mathcal{T})$.

The first issue in the construction of stability conditions is to produce a suitable heart of a bounded t-structure. For instance, the canonical choice of $\Coh(X)$ cannot be the heart of a stability condition with respect to the numerical Grothendieck group $\Lambda=\NN(X)$ of $X$, unless $X$ is a curve \cite{Toda}.
However, if we have a (weak) stability condition $(\mathcal{A}, Z)$, it is sometimes possible to produce a new heart by \textit{tilting} the old one. We discuss this procedure now.
\begin{dfn}
Let $\mathcal{A}$ be an abelian category. A \textit{torsion pair} is a pair of two full, additive subcategories $(\mathcal{F}, \mathcal{T})$ of $\AA$ such that:
\begin{enumerate}
    \item For any $T \in \mathcal{T}, F \in \mathcal{F}$, we have $\Hom( T, F) =0$.
    \item Given any $E \in \mathcal{A}$, there are $T \in \mathcal{T}, F \in \mathcal{F}$ and a short exact sequence:
    \begin{align*}
        0 \to T \to E \to F \to 0
    \end{align*}
\end{enumerate}
\end{dfn}
\noindent The importance of this notion comes from the following theorem:
\begin{thm}[\cite{HRS}]  
Let $\mathcal{A} \subset \D(X)$ be a heart of a bounded t-structure and let $(\mathcal{F}, \mathcal{T})$ be a torsion pair in $\mathcal{A}$. Then the extension-closure $\langle \mathcal{F}[1], \mathcal{T}\rangle$ is also a heart of a bounded t-structure in $\D(X)$. 
\end{thm}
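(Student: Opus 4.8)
The plan is to realise $\langle \mathcal{F}[1], \mathcal{T}\rangle$ as the heart of the t-structure obtained by ``tilting'' the given one at the torsion pair, and then to invoke the fact that the heart of a bounded t-structure automatically satisfies conditions (1) and (2) in the Definition above (this is the content of Lemma~3.2 of \cite{Bridgeland07}). Write $H^i_{\mathcal{A}} \colon \D(X) \to \mathcal{A}$ for the cohomology functors attached to the bounded t-structure with heart $\mathcal{A}$, and set $\mathcal{A}^{\sharp} := \langle \mathcal{F}[1], \mathcal{T}\rangle$. First I would introduce the candidate aisles
\begin{align*}
\mathcal{D}^{\leq 0} &:= \{ E : H^i_{\mathcal{A}}(E) = 0 \text{ for } i > 0 \text{ and } H^0_{\mathcal{A}}(E) \in \mathcal{T} \}, \\
\mathcal{D}^{\geq 0} &:= \{ E : H^i_{\mathcal{A}}(E) = 0 \text{ for } i < -1 \text{ and } H^{-1}_{\mathcal{A}}(E) \in \mathcal{F} \},
\end{align*}
and check that $\mathcal{D}^{\leq 0} \cap \mathcal{D}^{\geq 0} = \mathcal{A}^{\sharp}$: an object whose $\mathcal{A}$-cohomology is concentrated in degrees $-1, 0$ with $H^0_{\mathcal{A}} \in \mathcal{T}$ and $H^{-1}_{\mathcal{A}} \in \mathcal{F}$ sits in a triangle $\mathcal{F}[1] \to E \to \mathcal{T}$, hence lies in the extension closure, and conversely. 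The nesting inclusions $\mathcal{D}^{\leq 0}[1] \subseteq \mathcal{D}^{\leq 0}$ and $\mathcal{D}^{\geq 0}[-1] \subseteq \mathcal{D}^{\geq 0}$ are immediate from the defining cohomological conditions.

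Next I would verify the two genuine t-structure axioms. For orthogonality, take $A \in \mathcal{D}^{\leq 0}$ and $B \in \mathcal{D}^{\geq 0}[-1]$, so that $A \in \D^{\leq 0}_{\mathcal{A}}$ and $B \in \D^{\geq 0}_{\mathcal{A}}$ with $H^0_{\mathcal{A}}(A) \in \mathcal{T}$ and $H^0_{\mathcal{A}}(B) \in \mathcal{F}$. The standard truncation adjunctions for the t-structure of $\mathcal{A}$ give $\Hom(A, B) \cong \Hom_{\mathcal{A}}(H^0_{\mathcal{A}}(A), H^0_{\mathcal{A}}(B))$, which vanishes by condition (1) of the torsion pair. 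This is the only point where the torsion-pair hypothesis, rather than just the heart property of $\mathcal{A}$, is used.

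For the truncation triangles, given $E$ I would decompose its middle cohomology via the torsion pair, $0 \to T \to H^0_{\mathcal{A}}(E) \to F \to 0$ with $T \in \mathcal{T}$, $F \in \mathcal{F}$, and glue this to the standard triangle $\tau^{\leq -1}_{\mathcal{A}} E \to \tau^{\leq 0}_{\mathcal{A}} E \to H^0_{\mathcal{A}}(E)$ by the octahedral axiom. Setting $A := \mathrm{fib}\big(\tau^{\leq 0}_{\mathcal{A}} E \to H^0_{\mathcal{A}}(E) \to F\big)$, the octahedron on this composite produces a triangle $\tau^{\leq -1}_{\mathcal{A}} E \to A \to T$, from which $H^i_{\mathcal{A}}(A) = H^i_{\mathcal{A}}(E)$ for $i \leq -1$, $H^0_{\mathcal{A}}(A) = T$ and $H^i_{\mathcal{A}}(A) = 0$ for $i \geq 1$, so $A \in \mathcal{D}^{\leq 0}$. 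A second octahedron on $A \to \tau^{\leq 0}_{\mathcal{A}} E \to E$ exhibits $B := \mathrm{Cone}(A \to E)$ in a triangle $F \to B \to \tau^{\geq 1}_{\mathcal{A}} E$, giving $H^0_{\mathcal{A}}(B) = F$, $H^i_{\mathcal{A}}(B) = H^i_{\mathcal{A}}(E)$ for $i \geq 1$ and $H^i_{\mathcal{A}}(B) = 0$ for $i < 0$, so $B \in \mathcal{D}^{\geq 0}[-1]$; this yields the required truncation $A \to E \to B$.

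Finally, boundedness is inherited from $\mathcal{A}$: since $H^i_{\mathcal{A}}(E)$ vanishes for $|i| \gg 0$, every $E$ lies in $\mathcal{D}^{\leq n} \cap \mathcal{D}^{\geq -n}$ for $n \gg 0$. Hence $\mathcal{A}^{\sharp}$ is the heart of a bounded t-structure, and by Lemma~3.2 of \cite{Bridgeland07} it satisfies conditions (1) and (2) of the Definition. Concretely, condition (1) is the dévissage of the vanishing $\Hom(\mathcal{A}^{\sharp}, \mathcal{A}^{\sharp}[k]) = 0$ for $k < 0$ to the generators $\mathcal{T}$ and $\mathcal{F}[1]$, where the boundary case $k = -1$ is again the torsion-pair vanishing $\Hom_{\mathcal{A}}(\mathcal{T}, \mathcal{F}) = 0$, and condition (2) is the iterated truncation just constructed. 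I expect the main obstacle to be precisely the octahedral bookkeeping in the truncation step --- verifying that the cohomology objects of $A$ and $B$ come out as claimed --- whereas the nesting and the $\Hom$-vanishing reductions are routine once the torsion-pair axiom is invoked at the boundary degree.
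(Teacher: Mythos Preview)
The paper does not prove this theorem; it is stated with a citation to \cite{HRS} and used as a black box in the subsequent tilting constructions. Your proposal is the standard Happel--Reiten--Smal\o\ argument and is correct: the definition of the tilted aisles, the $\Hom$-vanishing via the torsion-pair axiom at the boundary degree, the octahedral construction of the truncation triangles, and the inheritance of boundedness are all as in the original reference. There is nothing to compare against here, since the authors simply invoke the result rather than reproving it.
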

Given a (weak) stability condition $\sigma = (\mathcal{A}, Z)$ on $\D(X)$, one may produce a new heart according to the theorem above by choosing any $\mu \in \mathbb{R}$ and considering the following torsion pair:
\begin{align*}
    \mathcal{F}_{\sigma}^{\mu} &= \langle E \in \mathcal{A}: E \textrm{ is semistable with } \mu_{\sigma} (E) \leq \mu \rangle  \\
    \mathcal{T}_{\sigma}^{\mu} &= \langle E \in \mathcal{A}: E \textrm{ is semistable with } \mu_{\sigma} (E) > \mu \rangle .
\end{align*}
We say that the new heart $\langle \mathcal{F}_{\sigma}^{\mu}[1], \mathcal{T}_{\sigma}^{\mu} \rangle$ is constructed by tilting the (weak) stability condition $\sigma$ at the slope $\mu$. This construction is ubiquitous in what follows.
\begin{example}(\cite[Example 2.8]{BLMS})  \label{ex_tiltslopestab}
Let $X$ be a smooth projective variety of dimension $n$ with an ample class $H$. We have that the group morphism 
$$Z_H \colon \Lambda \cong \Z^2 \to \C; \quad (H^n\rk(E), H^{n-1}\ch_1(E)) \mapsto -H^{n-1}\ch_1(E)+ H^n\rk(E) \sqrt{-1}$$ 
defines a weak stability function on $\Coh(X)$. Moreover, the pair $\sigma_H=(\Coh(X), Z_H)$ is a weak stability condition on $\D(X)$ with respect to $\Lambda$, known as \emph{slope stability}. The slope with respect to $\sigma_H$ is denoted by $\mu_H$. Furthermore, if $n=1$, then $\sigma_H$ is a stability condition on $\D(X)$.

We remark that slope semistable coherent sheaves satisfy the classical Bogomolov--Gieseker inequality: for every $\mu_H$-semistable $E \in \Coh(X)$ we have the inequality
\begin{equation}
\label{eq_BGineq}    
(H^{n-1}\ch_1(E))^2 -2H^n\rk(E) H^{n-2}\ch_2(E) \geq 0
\end{equation}
\end{example}

\subsection{GM varieties and Kuznetsov components} \label{sec_GM}
A Gushel--Mukai (GM) variety of dimension $n$, for $2 \leq n \leq 6$, is a smooth intersection $$\textrm{Cone}(\textrm{Gr}(2,5)) \cap Q,$$ where $\textrm{Cone}(\textrm{Gr}(2,5))$ is the projective cone over the Pl\"ucker-embedded Grassmanian $\textrm{Gr}(2,5) \hookrightarrow \mathbb{P}^9$ and $Q$ is a quadric hypersurface in some $\mathbb{P}(W) \cong \mathbb{P}^{n+4} \hookrightarrow \mathbb{P}^{10}$. Gushel \cite{Gushel} and Mukai \cite{Mukai} showed that for $n \geq 3$, GM varieties are precisely the Fano varieties of Picard number $1$, degree $10$ and coindex $3$, while if $n=2$, GM surfaces are Brill--Noether general polarized K3 surfaces. If the vertex of the cone $\textrm{Cone}(\textrm{Gr}(2,5))$ is not in the linear section $\P(W)$, then $X$ is an \emph{ordinary} GM variety, otherwise $X$ is a \emph{special} GM variety.  

Kuznetsov and Perry \cite{KP} proved that the bounded derived category $\D(X)$ of a GM variety $X$ of dimension $n \geq 3$ admits a semiorthogonal decomposition of the form
\begin{equation} \label{eq_sod}
    \D(X) = \langle \textrm{Ku}(X),\mathcal{O}_X, \mathcal{U}^\vee_X, ..., \mathcal{O}_X ((n-3)H), \mathcal{U}^\vee_X ((n-3)H) \rangle,
\end{equation}
where $\mathcal{U}_X$ is the pullback to $X$ of the rank $2$ tautological subbundle on the Grassmannian, $H \subset X$ is a hyperplane class and $\textrm{Ku}(X) :=\langle \mathcal{O}_X, \mathcal{U}_X^\vee, ..., \mathcal{O}_X ((n-3)H), \mathcal{U}_X^\vee ((n-3)H) \rangle^{\perp}$ is the \textit{Kuznetsov component}. For $n=2$, set $\Ku(X):= \D(X)$.

Since $\Ku(X)$ is an admissible subcategory of $\D(X)$, it admits a Serre functor, which we denote by $S_{\Ku(X)}$. By \cite[Proposition 2.6]{KP} (which makes use of \cite[Corollaries 3.7, 3.8]{Kuz_calabi}) the Serre functor of $\Ku(X)$ has the following property:
\begin{itemize}
\item if $n$ is even, then $S_{\Ku(X)} \cong [2]$;
\item if $n$ is odd, then $S_{\Ku(X)} \cong \sigma[2]$ for a
nontrivial involutive autoequivalence $\sigma$ of $\Ku(X)$.
\end{itemize}
Moreover, computing the Hochschild homology \cite[Proposition 2.9]{KP} one sees that if $n$ is even, then $\Ku(X)$ is a noncommutative K3 surface, while for $n$ odd $\Ku(X)$ is a noncommutative Enriques surface.

Let $X$ be a GM threefold. Since $\omega_X \cong \OO_X(-H)$, by Serre duality we can write the semiorthogonal decomposition \eqref{eq_sod} as
$$\D(X)=\langle \Ku(X), \OO_X, \UU_X^\vee \rangle=\langle \UU_X^\vee(-H), \Ku(X), \OO_X \rangle.$$
Since $\UU_X^\vee(-H) \cong \UU_X$, we obtain the alternative semiorthogonal decomposition 
$$\D(X)=\langle \L_{\UU_X}(\Ku(X)), \UU_X, \OO_X \rangle$$ 
which is the one used in \cite{BLMS} for the construction of stability conditions. Note that $\Ku(X)$ and $\L_{\UU_X}(\Ku(X))$ are equivalent by \cite[Proposition 3.8]{Kuz_cubic}, \cite{Bondal}. In order to be compatible with  \cite{BLMS}, we set
\begin{equation}
\label{eq_defKu}    
\Ku(X)_1:= \langle \UU_X, \OO_X \rangle^{\perp}
\end{equation}
sitting in
$$\D(X)=\langle \Ku(X)_1, \UU_X, \OO_X \rangle$$
(in fact, in the rest of this paper we will need to be precise on which Kuznetsov component we are working on, see Section \ref{sec_staboverLibound}).  By \cite[Proposition 3.9]{Kuz}, the numerical Grothendieck group $\NN(\Ku(X)_1)$ of $\textrm{Ku}(X)_1$ satisfies $\NN(\Ku(X)_1) \cong \mathbb{Z}^{\oplus 2}$ and a basis is
\begin{align} \label{eq_basis}
    b_1 &= 1- \frac{3}{10}H^2 + \frac{1}{20}H^3 \\
    b_2 &= H- \frac{3}{5}H^2 + \frac{1}{60}H^3. \nonumber
\end{align}
The Todd class of $X$ is
\begin{equation*}
\td(X)=1 + \frac{1}{2}H + \frac{17}{60}H^2 + \frac{1}{10}H^3.    
\end{equation*}


\subsection{Stability conditions on $\Ku(X)$}
\label{sec_stabcondonKu}
The existence of stability conditions on $\Ku(X)$ of a GM variety is known. More precisely, if $X$ has dimension $2$, this follows from Bridgeland's work \cite{Bridgeland}. By the duality conjecture \cite[Theorem 1.6]{KP_cones} if $X$ has dimension $6$ or $5$, the problem reduces to the same question in dimension $4$ and $3$, respectively: if $X$ is a GM fourfold, this is proved in \cite{PPZ}, while the case of GM threefolds is solved by Bayer, Lahoz, Macr\`i and Stellari in \cite{BLMS}.

In this section we focus on GM threefolds and we review the construction of stability conditions on $\Ku(X)$ defined in \eqref{eq_defKu} given in \cite{BLMS}.


Stability conditions on $\Ku(X)$ are induced from double-tilted slope stability on $\D(X)$.
First for $\alpha >0$ and $\beta \in \mathbb{R}$, consider the weak stability conditions on $\D(X)$ of the form $$\sigma_{\alpha, \beta} = (\textrm{Coh}^{\beta}(X), Z_{\alpha, \beta})$$  
with respect to the rank-$3$ lattice $\Lambda$ generated by vectors $(H^3 \rk(E), H^2\ch_1(E), H\ch_2(E))$ for $E \in \D(X)$. Here, $\textrm{Coh}^{\beta} (X)$ is the heart of a bounded t-structure obtained by tilting $\textrm{Coh}(X)$ with respect to slope stability at slope $\mu = \beta$ (see Example \ref{ex_tiltslopestab}), and the central charge $Z_{\alpha, \beta}$ is 
\begin{align}
    Z_{\alpha, \beta}(E) = \frac{1}{2} \alpha^2 H^3 \cdot \textrm{ch}^{\beta}_0 (E) - H \cdot \textrm{ch}^{\beta}_2(E) +i H^2 \cdot \textrm{ch}^{\beta}_1 (E),
\end{align}
where $\textrm{ch}^{\beta}_i (E)$ is the $i$-th component of the twisted Chern character $\textrm{ch}^{\beta} (-) = e^{-\beta H} \cdot \textrm{ch}(-)$ (see \cite[Proposition 2.12]{BLMS}).
For $E \in \Coh^\beta(X)$ the slope of $E$ defined by $\sigma_{\alpha, \beta}$ is
\begin{equation} \label{eq_mualphabeta}
 \mu_{\alpha, \beta}(E)=
\begin{cases}
-\frac{\Re Z_{\alpha, \beta}(E)}{\Im Z_{\alpha, \beta}(E)} & \text{if } \Im Z_{\alpha, \beta}(E) > 0\\
+ \infty & \text{otherwise}.
\end{cases}   
\end{equation}
Note that $\sigma_{\alpha, \beta}$-semistable objects satisfy the inequality \eqref{eq_BGineq} which can be taken as the quadratic form satisfying the support property.

Second for $\mu \in \R$, denote by $\textrm{Coh}_{\alpha, \beta}^{\mu} (X)$ the heart obtained by tilting $\Coh^\beta(X)$ with respect to $\sigma_{\alpha, \beta}$ at slope $\mu_{\alpha, \beta}= \mu$. Fix $u \in \mathbb{C}$ such that $u$ is the unit vector in the upper half plane with $\mu = - \frac{\textrm{Re}(u)}{\textrm{Im}(u)}$. By \cite[Proposition 2.15]{BLMS} we have that 
\begin{equation} \label{eq_weakstabcond} 
\sigma_{\alpha, \beta}^{\mu} = (\textrm{Coh}^{\mu}_{\alpha, \beta} (X), Z^{\mu}_{\alpha, \beta})    
\end{equation}
is a weak stability condition on $\D(X)$ with respect to $\Lambda$, where $Z^{\mu}_{\alpha, \beta} = \frac{1}{u}Z_{\alpha, \beta}$. 

Now we recall the following criterion from \cite{BLMS}, which is useful for determining when weak stability conditions defined on $\D(X)$, like those above, restrict to stability conditions on the orthogonal complement of a subcategory determined by an exceptional collection. In the following, $\TT$ is a triangulated category with Serre functor $S$, $E_0, \cdot \cdot \cdot, E_m$ are exceptional objects in $\TT$ and $\DD= \langle E_0, \cdot \cdot \cdot, E_m \rangle$, giving a semiorthogonal decomposition $\TT= \langle \DD^{\perp}, \DD \rangle$. 

\begin{prop}[\cite{BLMS}, Proposition 5.1]
\label{prop_criterion}
Let $\sigma =(\A, Z)$ be a weak stability condition on $\T$. Assume that:
\begin{enumerate}
    \item $E_i \in \A$
    \item $S(E_i) \in \A[1]$
    \item $Z(E_i) \neq 0$ for all $i$.
\end{enumerate}
If for all $0 \neq E \in \A \cap \DD^{\perp}= \A_1$ we have $Z(E) \neq 0$, then the pair $(\A_1, Z|_{\A_1})$ defines a stability condition on $\DD^{\perp}$.  
\end{prop}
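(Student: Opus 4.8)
The plan is to verify, in order, the four defining properties of a stability condition for the candidate $\sigma_1 := (\A_1, Z|_{\A_1})$ on $\DD^\perp$, where $\A_1 = \A \cap \DD^\perp$: that $\A_1$ is the heart of a bounded t-structure, that $Z|_{\A_1}$ is a genuine stability function, that Harder--Narasimhan filtrations exist, and that the support property holds. The conceptual crux — and the step where hypotheses (1) and (2) are used essentially — is the first one, so I would spend most of the effort there; the remaining three are comparatively formal once the heart is in place.

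To show $\A_1$ is a heart on $\DD^\perp$, first note that $\Hom(E, F[k]) = 0$ for $E, F \in \A_1$ and $k < 0$ is inherited from $\A$, since $\A_1 \subseteq \A$. The substantive point is the filtration axiom: given $G \in \DD^\perp$, I must show that its cohomology objects $H^q_{\A}(G) \in \A$ with respect to the t-structure of $\A$ actually lie in $\DD^\perp$, i.e. that $\Hom^\bullet(E_j, H^q_{\A}(G)) = 0$ for every $j$. Since $E_j, H^q_\A(G) \in \A$ by condition (1), the heart axiom gives $\Hom(E_j, H^q_\A(G)[p]) = 0$ for $p < 0$ directly. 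For $p \geq 2$, Serre duality gives
\[
\Hom\bigl(E_j, H^q_\A(G)[p]\bigr) \cong \Hom\bigl(H^q_\A(G), S(E_j)[-p]\bigr)^\vee,
\]
and condition (2), $S(E_j) \in \A[1]$, writes $S(E_j) = B_j[1]$ with $B_j \in \A$, so the right-hand side equals $\Hom\bigl(H^q_\A(G), B_j[1-p]\bigr)^\vee = 0$ because $1 - p \leq -1 < 0$. Hence in the hypercohomology spectral sequence
\[
E_2^{p,q} = \Hom\bigl(E_j, H^q_\A(G)[p]\bigr) \Longrightarrow \Hom^{p+q}(E_j, G)
\]
only the columns $p = 0$ and $p = 1$ survive. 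The differentials $d_r$ for $r \geq 2$ land in columns $\geq 2$ and so vanish, whence $E_2 = E_\infty$. Since $G \in \DD^\perp$ forces the abutment to vanish in all degrees, every $E_2^{p,q}$ is zero, which supplies the missing cases $p = 0, 1$ and gives $H^q_\A(G) \in \DD^\perp \cap \A = \A_1$. Thus the canonical $\A$-filtration of $G$ is an $\A_1$-filtration and the t-structure restricts to $\DD^\perp$. I expect this two-column degeneration to be the main obstacle, in the sense that it is precisely where the shape of hypotheses (1)--(2) is forced; condition (3) enters only as the nondegeneracy guaranteeing that the $E_j$ have nonzero central charge.

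With $\A_1$ a heart, the remaining properties follow more routinely. That $Z|_{\A_1}$ is a stability function uses the displayed hypothesis of the proposition: for $0 \neq E \in \A_1$ we already have $\Im Z(E) \geq 0$ with $\Im Z(E) = 0 \Rightarrow \Re Z(E) \leq 0$ from $Z$ being a weak stability function on $\A$, and the assumption $Z(E) \neq 0$ upgrades this to $\Re Z(E) < 0$ when $\Im Z(E) = 0$, which is exactly the stability-function condition. For the Harder--Narasimhan property I would invoke the standard criterion of \cite{Bridgeland07}: since $Z$ factors through the finite-rank lattice $\Lambda$, the image $\Im Z$ lies in a discrete subgroup of $\R$, so the requisite chain conditions hold; alternatively the HN filtrations of $\sigma$ may be shown to restrict.

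Finally, for the support property I would take the same quadratic form $Q$ on $\Lambda \otimes \R$. Negative-definiteness on $\ker(Z|_{\A_1})$ is automatic because $\ker(Z|_{\A_1}) \subseteq \ker Z$. The one genuinely delicate point is that a $\sigma_1$-semistable object $E \in \A_1$ need not be $\sigma$-semistable in $\A$, so $Q(E) \geq 0$ does not follow immediately; in general one must compare the two notions of semistability by controlling the $\sigma$-HN factors of $E$, which still lie in $\A$. In the setting of this paper this subtlety evaporates: the relevant lattice has rank $2$ and $Z|_{\A_1}$ has trivial kernel, so $\ker(Z|_{\A_1}) = 0$ and one may simply choose a positive-definite $Q$, for which both conditions hold trivially. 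Assembling the four verifications yields that $(\A_1, Z|_{\A_1})$ is a stability condition on $\DD^\perp$.
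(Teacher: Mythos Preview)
The paper does not supply its own proof of this proposition: it is quoted verbatim as \cite[Proposition~5.1]{BLMS} and used as a black box, so there is no in-paper argument to compare against. Your sketch follows what is essentially the proof given in \cite{BLMS}: the two-column spectral-sequence degeneration (which the paper later invokes as \cite[Lemma~4.3]{BLMS}) is exactly the mechanism by which hypotheses (1) and (2) force the $\A$-cohomology of objects of $\DD^\perp$ to remain in $\DD^\perp$, and your treatment of the stability-function axiom from the hypothesis $Z(E)\neq 0$ is correct.

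One remark on the support property: you are right that for the rank-$2$ application in this paper the kernel of $Z|_{\A_1}$ is trivial and any positive-definite $Q$ works, but as stated the proposition is for a general lattice $\Lambda$, and in \cite{BLMS} the support property is obtained by a separate argument (comparing the $\sigma_1$-HN filtration in $\A_1$ with the $\sigma$-HN filtration in $\A$ and using that the only $\sigma$-semistable objects of $\A$ with $Z=0$ are not in $\DD^\perp$). Your disclaimer about this step is appropriate, but if you want a proof valid in the stated generality you should fill in that comparison rather than fall back on the rank-$2$ shortcut.
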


The criterion above was applied in \cite{BLMS} to show the following existence result.
\begin{thm}[\cite{BLMS}, Theorem 6.9] \label{thm_BLMSresult}
Let $X$ be a GM threefold. Then the weak stability conditions $\sigma^{\mu}_{\alpha, \beta}$ defined in \eqref{eq_weakstabcond} induce stability conditions on $\emph{Ku}(X)_1$ so long as $\alpha >0$ is sufficiently close to $0$, $\beta>-1$ is sufficiently close to $-1$ and $\mu_{\alpha, \beta} (\mathcal{O}_X (-1)[1])< \mu < \mu_{\alpha, \beta} (\mathcal{U}_X)$.  
\end{thm}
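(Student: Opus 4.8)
The goal is to verify the three hypotheses of Proposition \ref{prop_criterion} for the weak stability condition $\sigma^{\mu}_{\alpha,\beta}$ applied to the exceptional collection $E_0 = \UU_X$, $E_1 = \OO_X$ (so that $\DD = \langle \UU_X, \OO_X\rangle$ and $\DD^\perp = \Ku(X)_1$), together with the additional nonvanishing condition $Z^\mu_{\alpha,\beta}(E) \neq 0$ for all nonzero $E \in \A \cap \DD^\perp$. Here the heart is $\A = \Coh^\mu_{\alpha,\beta}(X)$ and the central charge is $Z^\mu_{\alpha,\beta} = \tfrac{1}{u} Z_{\alpha,\beta}$. The overall strategy mirrors the approach taken in \cite{BLMS}: one controls the position of the exceptional objects and their images under the Serre functor with respect to the double-tilted heart, using slope estimates that become sharp in the limit $\alpha \to 0^+$, $\beta \to -1^+$.

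The main steps are as follows. First I would compute the twisted Chern characters $\ch^\beta(\OO_X)$ and $\ch^\beta(\UU_X)$, hence the values $Z_{\alpha,\beta}(\OO_X)$ and $Z_{\alpha,\beta}(\UU_X)$, and verify condition (3), i.e. that these are nonzero for $\alpha,\beta$ in the prescribed range; this is a direct calculation using the Chern character data for $X$ recorded above. Second, to establish conditions (1) and (2), I would show that $\OO_X$ and $\UU_X$ lie in the double-tilted heart $\Coh^\mu_{\alpha,\beta}(X)$, and that their Serre duals lie in a single shift of it. The key input is that both $\OO_X$ and $\UU_X$ are $\mu_{\alpha,\beta}$-stable in $\Coh^\beta(X)$ for the relevant parameters (this follows from their being slope-stable sheaves with the appropriate slopes), so that membership in the tilted heart $\Coh^\mu_{\alpha,\beta}(X)$ is governed precisely by where their slopes $\mu_{\alpha,\beta}(\OO_X)$ and $\mu_{\alpha,\beta}(\UU_X)$ sit relative to the tilting slope $\mu$. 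The stated hypothesis $\mu_{\alpha,\beta}(\OO_X(-1)[1]) < \mu < \mu_{\alpha,\beta}(\UU_X)$ is exactly engineered so that $\UU_X$ survives into $\A$ and $\OO_X$ does as well (the bound on $\OO_X(-1)[1]$ controlling the shifted dual via $S_{\D(X)}(-) \cong (-)\otimes\OO_X(-H)[3]$ and Serre duality on the threefold).

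Third, for condition (2) I would use that the Serre functor of $\D(X)$ is $(-)\otimes \omega_X[3] = (-)\otimes\OO_X(-H)[3]$, so $S(\OO_X) = \OO_X(-H)[3]$ and $S(\UU_X) = \UU_X(-H)[3]$; I would then track the slopes of $\OO_X(-H)$ and $\UU_X(-H)$ and verify, again via the limit estimates as $(\alpha,\beta)\to(0^+,-1^+)$, that these twisted objects land in $\A[1]$ after accounting for the shift. This is where the asymptotic analysis is essential, and where the hypotheses on $\beta$ near $-1$ are used most heavily. Finally, the additional nonvanishing of $Z^\mu_{\alpha,\beta}$ on $\A \cap \Ku(X)_1$ reduces, since $Z^\mu_{\alpha,\beta}$ differs from $Z_{\alpha,\beta}$ by the nonzero scalar $\tfrac{1}{u}$, to the corresponding statement for $Z_{\alpha,\beta}$; any nonzero object $E$ of the Kuznetsov component with $Z_{\alpha,\beta}(E)=0$ would have to have vanishing $(\ch_0,\ch_1,\ch_2)$-projection, and one rules this out using the explicit description of $\NN(\Ku(X)_1)$ via the basis $b_1,b_2$ together with the support property.

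I expect the main obstacle to be Step two and three jointly, namely the verification of conditions (1) and (2) — the precise bookkeeping of which shifts of the exceptional objects and their Serre duals lie in the double-tilted heart. The difficulty is that $\UU_X$ has rank $2$, so its slope behaves differently from that of a line bundle, and one must check stability and slope inequalities for both $\UU_X$ and $\UU_X(-H)$ uniformly as the parameters approach the boundary; controlling these limits, and ensuring the open interval for $\mu$ is nonempty, is the technical heart of the argument. Conditions (3) and the final nonvanishing are comparatively routine once the Chern character computations are in hand.
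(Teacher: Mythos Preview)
Your plan follows essentially the same route as the paper's proof of the refined version in Proposition~\ref{prop_stabcondKu} (Theorem~\ref{thm_BLMSresult} itself is only cited from \cite{BLMS} without proof): verify the hypotheses of Proposition~\ref{prop_criterion} by checking that $\UU_X$, $\OO_X$ and the shifts $\UU_X(-H)[3]$, $\OO_X(-H)[3]$ coming from the Serre functor land in the correct shift of the double-tilted heart, using that all four underlying sheaves are slope-stable (hence $\sigma_{\alpha,\beta}$-stable) and then comparing their tilt-slopes against the parameter $\mu$.

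The one place where your outline has a gap is the final nonvanishing step. You assert that a nonzero $E \in \A \cap \Ku(X)_1$ with $Z_{\alpha,\beta}(E)=0$ ``would have to have vanishing $(\ch_0,\ch_1,\ch_2)$-projection,'' and then appeal to the basis $b_1,b_2$ of $\NN(\Ku(X)_1)$ and the support property. But $Z_{\alpha,\beta}=0$ is only two real conditions on the rank-three lattice $\Lambda$, and the support property does not by itself force $\ch_{\leq 2}=0$; you must use that $E$ actually lies in the heart. The missing input is \cite[Lemma 2.16]{BLMS}: an object of $\Coh^\mu_{\alpha,\beta}(X)$ with $Z^\mu_{\alpha,\beta}=0$ already lies in $\Coh^\beta(X)$ with $Z_{\alpha,\beta}=0$, and such objects are torsion sheaves supported on points. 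The paper then finishes with the one-line observation $\Hom(\OO_X,T)\neq 0$ for any such $T$, so $T\notin \Ku(X)_1$. Your numerical route also works once this lemma is in place --- a nonzero sheaf supported on points has class $(0,0,0,n)$ with $n>0$, which is visibly not in the span of $b_1,b_2$ --- but you need to invoke that reduction explicitly rather than the support property.
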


\section{Action of the Serre functor on stability conditions on $\Ku(X)$} \label{sec_actionSerrefunctor}
This section is devoted to the proof of Theorem \ref{thm_main}. In Section \ref{sec_staboverLibound} we induce stability conditions on the Kuznetsov component of $X$ from (a tilt of the) tilt stability conditions lying over Li's boundary, defined in \cite{Li_Fano3}. This allows to enlarge the region where there are induced stability conditions on $\Ku(X)$ with the method of \cite{BLMS} and will be useful in Section \ref{sec_stabonKu2}. In Section \ref{sec_proof} we outline the proof of Theorem \ref{thm_main}, which will be carried out in Sections \ref{sec_stabonKu3}, \ref{sec_stabonKu2}, \ref{sec_endofproof}. 

\subsection{Stability conditions over Li's boundary} \label{sec_staboverLibound}
Let $X$ be a GM threefold. Note that we have the following semiorthogonal decompositions: 
\begin{gather}
\D(X)= \langle \Ku(X)_1, \UU_X, \OO_X \rangle, \label{eq_Ku1} \\
\D(X)= \langle \Ku(X)_2, \OO_X, \UU_X^\vee \rangle, \label{eq_Ku2} \\
\D(X)= \langle \Ku(X)_3, \UU_X^\vee, \OO_X(H) \rangle \label{eq_Ku3}    
\end{gather} 
Here $\Ku(X)_1$ was already defined in \eqref{eq_defKu} and $\Ku(X)_2:=\Ku(X)$ as in \eqref{eq_sod}. We can obtain \eqref{eq_Ku3} tensoring \eqref{eq_Ku1} by $\OO_X(H)$ and setting $\Ku(X)_3:= \Ku(X)_1(H)$. Analogously, by Serre duality we have
$$\D(X)= \langle \Ku(X)_3, \UU_X^\vee, \OO_X(H) \rangle= \langle \OO_X, \Ku(X)_3, \UU_X^\vee \rangle=\langle \L_{\OO_X}(\Ku(X)_3), \OO_X, \UU_X^\vee\rangle$$
so we get $\Ku(X)_2=\L_{\OO_X}(\Ku(X)_3)$. Note also that $\Ku(X)_1$, $\Ku(X)_2$, $\Ku(X)_3$ are equivalent to each others by \cite[Proposition 3.8]{Kuz_cubic}, \cite{Bondal}. 

As in \cite[Section 1]{LiZhao_birational}, \cite{Li_Fano3}, we consider the following reparametrization of the tilt stability condition $\sigma_{\alpha,\beta}$, whose definition is recalled in Section \ref{sec_preliminary}. For $q>0, s \in \R$ and $E \in \D(X)$ we define
$$Z_{s,q}(E)= -(H \cdot \ch_2(E)-q \rk(E)H^3)+ \sqrt{-1}(H^2 \cdot \ch_1(E)-s\rk(E)H^3).$$
For $E \in \Coh^s(X)$ we have the associated slope function
$$\mu_{s,q}(E)= \frac{H \cdot \ch_2(E)-q \rk(E)H^3}{H^2 \cdot \ch_1(E)-s\rk(E)H^3}.$$
Then for $\alpha>0, \beta \in \R$, setting $s=\beta$, $q=\frac{\alpha^2+\beta^2}{2}$, it follows that
\begin{equation} \label{eq_reparametrizedslope}
\mu_{\alpha,\beta}= \mu_{s,q}-s,   
\end{equation}
where $\mu_{\alpha, \beta}$ is defined in \eqref{eq_mualphabeta} and $\mu_{s,q}$ is defined in the above formula, and for $q> \frac{1}{2}s^2$ the pair $\sigma_{s,q}=(\Coh^s(X), Z_{s,q})$ defines a weak stability condition on $\D(X)$. 

For $E \in \D(X)$ we consider the reduced character
$$\widetilde{v}_H(E):=[H^3\rk(E):H^2 \cdot \ch_1(E): H \cdot \ch_2(E)]$$
which defines a point in a projective plane $\mathbb{P}^2_{\R}$ when $\widetilde{v}_H(E) \neq \textbf{0}$. If $\rk(E) \neq 0$, we consider the affine coordinates
$$\left ( s(E):=\frac{H^2 \cdot \ch_1(E)}{H^3\rk(E)},\, q(E):= \frac{H \cdot \ch_2(E)}{H^3\rk(E)} \right ) \in \mathbb{A}^2_{\R}.$$
Note that since the inequality \eqref{eq_BGineq} holds for $\sigma_{s,q}$-semistable objects, we have that points below the parabola $q= \frac{1}{2}s^2$ correspond to $\sigma_{s,q}$-semistable objects. Furthermore, the slope of a $\sigma_{s,q}$-semistable objects $E \in \Coh^s(X)$ is the gradient of the line connecting $(s,q)$ and $(s(E), q(E))$ (see Figure \eqref{fig_0}).

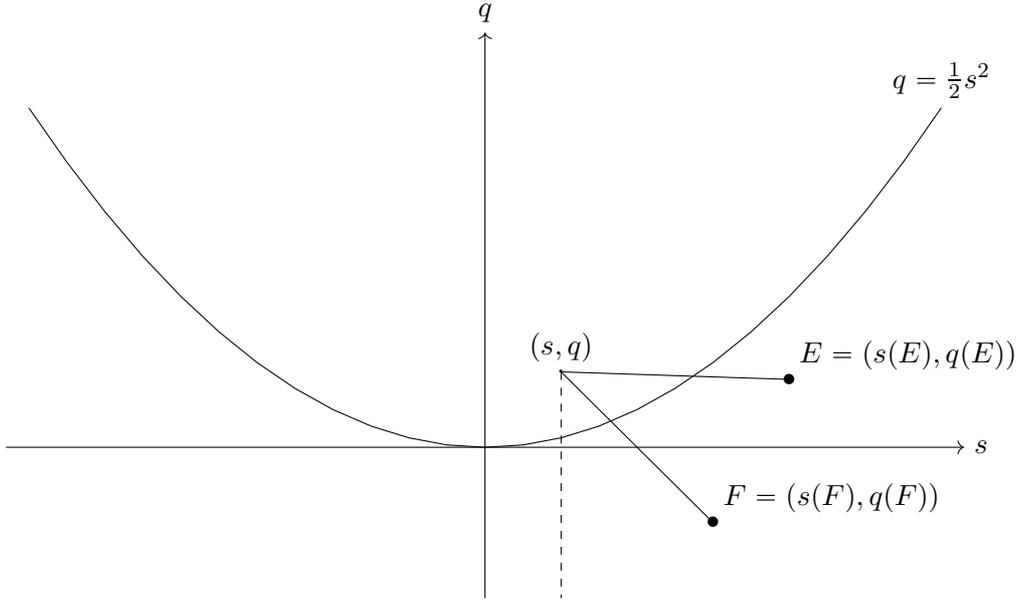
\begin{figure}[htb]
\centering
\begin{tikzpicture}[domain=-6:6]
\draw[->] (-6.3,0) -- (6.3,0) node[right] {$s$};
\draw[->] (0,-2) -- (0, 5.5) node[above] {$q$};
\draw plot (\x,{0.125*\x*\x}) node[above] {$q = \frac{1}{2}s^{2}$};
\coordinate (O) at (0,0);

\coordinate (E) at (4, 0.9);
\node  at (E) {$\bullet$};
\draw (E) node [above right]  {$E=(s(E),q(E))$}; 

\coordinate (F) at (3, -1);
\node  at (F) {$\bullet$};
\draw (F) node [above right]  {$F=(s(F),q(F))$}; 

\node  at (1,1) {$\cdot$};
\draw (1,1) node [above]  {$(s,q)$}; 

\draw (E) -- (1,1);
\draw (F) -- (1,1);
\draw[dashed] (1,1) -- (1, -2);
 
\end{tikzpicture}

\caption{If $E$, $F$ are $\sigma_{s,q}$-semistable, their $\mu_{s,q}$-slope is the gradient of the line connecting the point $(s,q)$ with $(s(E),q(E))$ and $(s(F),q(F))$, respectively. We may also compare the $\mu_{s,q}$-slope of $E$ and $F$, using the picture: $E$ has larger slope than $F$ is and only if the line connecting $E$ to $(s,q)$ is above the line connecting $F$ with $(s,q)$ (see \cite[Lemma 2]{LiZhao2}). \label{fig_0}}   
\end{figure}

By \cite[Theorem 0.3]{Li_Fano3} slope stable coherent sheaves on $X$ satisfy a stronger Bogomolov inequality. More precisely, in the affine plane $\mathbb{A}^2_{\R}$ we consider the open region
\begin{equation}
\label{eq_regionR}
R_{\frac{3}{20}}
\end{equation}
defined in \cite[Definition 3.1]{Li_Fano3} as the set of points above the curve $s^2-2q=\frac{3}{20}$ and above the tangent lines to the curve $s^2-2q=0$ at $\widetilde{v}_H(\OO_X(kH))$ for all $k \in \Z$ (see Figure \eqref{fig_1}). 

\begin{figure}[htb]
\centering
\begin{tikzpicture}[domain=-6:6]
\draw[->] (-6.3,0) -- (6.3,0) node[right] {$s$};
\draw[->] (0,-2) -- (0, 5.5) node[above] {$q$};
\draw plot (\x,{0.125*\x*\x}) node[above] {$q = \frac{1}{2}s^{2}$};
\coordinate (O) at (0,0);
\node  at (O) {$\bullet$};
\draw (O) node [above right]  {$\mathcal{O}_{X}$};

\draw plot (\x,{0.125*\x*\x -0.3}) node[right] {$q = \frac{1}{2}s^{2}- \frac{3}{40}$};

\coordinate (OH) at (-4,2);
\node  at (OH) {$\bullet$};
\draw (-4,2) node [above right]  {$\mathcal{O}_{X}(-H)$};

\coordinate (OK) at (4,2);
\node  at (OK) {$\bullet$};
\draw (4,2) node [above left]  {$\mathcal{O}_{X}(H)$};

\coordinate (P) at (-1.7,-0.2);
\draw (OH)--(P);

\coordinate (Q) at (1.7,-0.2);
\draw (OK)--(Q);

\coordinate (M) at (-2.3,0.38);
\node at (M) {$\bullet$}; 

\coordinate (N) at (2.3,0.38);
\node at (N) {$\bullet$}; 

\draw[red] (OH)--(M) ;
\draw[red] (OK)--(N) ;

\coordinate (A) at (-1.5,0);
\node at (A) {$\bullet$}; 

\coordinate (B) at (1.5,0);
\node at (B) {$\bullet$}; 
\draw[red] (A)--(B) ;

\draw[domain=-2.3:-1.5,color=red] plot (\x,{0.125*\x*\x -0.3});
\draw[domain=1.5:2.3,color=red] plot (\x,{0.125*\x*\x -0.3});

\end{tikzpicture}

\caption{We represent the boundary of the region $R_{\frac{3}{20}}$ among $\widetilde{v}_H(\OO_X(-H))$ and $\widetilde{v}_H(\OO_X(H))$ in red.  \label{fig_1}}  
\end{figure}

As a consequence, we obtain the following refined result.

\begin{prop}[\cite{BLMS}, Proposition 2.12, \cite{Li_Fano3}, Theorem 0.3]  \label{prop_Libound}
For $(s,q) \in R_{\frac{3}{20}}$, the pair $\sigma_{s,q}=(\emph{Coh}^s(X), Z_{s,q})$    defines a weak stability condition on $\D(X)$ with respect to the lattice $\Lambda^2_H \cong \Z^{\oplus 3}$ generated by the reduced Chern character. 
\end{prop}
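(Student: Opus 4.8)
The plan is to verify the three defining conditions of a weak stability condition for the pair $(\Coh^s(X), Z_{s,q})$ when $(s,q) \in R_{\frac{3}{20}}$, taking as input the construction of \cite[Proposition 2.12]{BLMS} valid for $q > \frac{1}{2}s^2$ and upgrading it by means of the stronger Bogomolov inequality of \cite[Theorem 0.3]{Li_Fano3}. The crucial preliminary remark is that the heart $\Coh^s(X)$ depends only on $s$: it is the tilt of $\Coh(X)$ at the slope $\mu_H = s$, and is a heart of a bounded t-structure by \cite{HRS}, independently of $q$. Hence, fixing $s$ and letting $q$ decrease from the range $q > \frac{1}{2}s^2$ into $R_{\frac{3}{20}}$ changes only the central charge, and the whole problem is to control this one-parameter family of charges on a fixed heart.

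First I would check that $Z_{s,q}$ is a weak stability function. The imaginary part $\Im Z_{s,q}(E) = H^2 \cdot \ch_1(E) - s\,\rk(E)H^3$ is nonnegative on $\Coh^s(X)$ by construction of the tilt, regardless of $q$, so the only point is the implication $\Im Z_{s,q}(E) = 0 \Rightarrow \Re Z_{s,q}(E) \le 0$. The objects $0 \neq E \in \Coh^s(X)$ with $\Im Z_{s,q}(E) = 0$ are extensions of sheaves supported in dimension $\le 1$, for which $\Re Z_{s,q}(E) = -H \cdot \ch_2(E) \le 0$ automatically, together with shifts $F[1]$ of $\mu_H$-semistable sheaves $F$ of slope exactly $s$; for the latter $\Re Z_{s,q}(F[1]) = H^3\rk(F)\,(q(F) - q)$, so one needs $q(F) \le q$. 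Passing to the $\mu_H$-stable factors of $F$, those not isomorphic to some $\OO_X(kH)$ satisfy $q \le \frac{1}{2}s^2 - \frac{3}{40}$ by \cite[Theorem 0.3]{Li_Fano3} and so lie strictly below $(s,q)$, while a factor $\OO_X(kH)$ can occur only for $s = k \in \Z$ and is then accounted for precisely by the tangent-line part of the boundary of $R_{\frac{3}{20}}$, which forces $q > \frac{1}{2}k^2 = q(\OO_X(kH))$. The Harder--Narasimhan property is then obtained exactly as in \cite{BLMS, BMS}, from the noetherianity of $\Coh^s(X)$ and the discreteness of the image of $\Im Z_{s,q}$, and does not depend on $q$.

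The main obstacle is the support property, and here \cite[Theorem 0.3]{Li_Fano3} is indispensable. The naive candidate, the discriminant $\Delta_H(E) = (H^2 \cdot \ch_1(E))^2 - 2H^3\rk(E)\,H \cdot \ch_2(E)$, does not work below the parabola: on the line $\ker Z_{s,q}$ one computes $\Delta_H = (\rk H^3)^2(s^2 - 2q)$, which is negative definite exactly when $s^2 - 2q < 0$, i.e.\ only above $q = \frac{1}{2}s^2$. I would therefore produce a quadratic form $Q$ on $\Lambda^2_H \otimes \R \cong \R^3$ adapted to the shape of $R_{\frac{3}{20}}$, obtained by combining $\Delta_H$ with the linear forms cutting out the stronger parabola $s^2 - 2q = \frac{3}{20}$ and the tangent lines to $s^2 - 2q = 0$ at the $\widetilde v_H(\OO_X(kH))$, so that $Q|_{\ker Z_{s,q}}$ becomes negative definite throughout $R_{\frac{3}{20}}$. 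The content of \cite[Theorem 0.3]{Li_Fano3} is that the reduced characters of $\mu_H$-stable sheaves are confined to the complement of the open region $R_{\frac{3}{20}}$; propagating this confinement through the tilting construction (as in \cite{BMS}) gives $Q(E) \ge 0$ for every $\sigma_{s,q}$-semistable object $E$.

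I expect the delicate part to be twofold: first, the behaviour near the tangent lines, i.e.\ near integral $s$, where the line bundles $\OO_X(kH)$ sit on the classical parabola and cannot be separated by the stronger parabola alone, so that the tangent-line part of the boundary must be exploited; and second, transferring the sheaf-level inequality of \cite[Theorem 0.3]{Li_Fano3} to tilt-semistable objects. The cleanest way to organise the latter is a deformation argument: since $\sigma_{s,q}$ is already known to be a weak stability condition for $q > \frac{1}{2}s^2$, one shows that as $(s,q)$ moves down into $R_{\frac{3}{20}}$ the validity of $Q \ge 0$ on semistable objects is preserved, because no semistable class can reach the boundary of $R_{\frac{3}{20}}$ without violating Li's inequality, so the support property is inherited from the already-established region.
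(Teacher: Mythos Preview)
The paper does not provide its own proof of this proposition: it is stated as a direct consequence of the cited results \cite[Proposition 2.12]{BLMS} and \cite[Theorem 0.3]{Li_Fano3}, and the text immediately moves on. Your proposal is therefore not being compared against a proof in the paper, but against the way those two references fit together.

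That said, your outline is correct and captures exactly the standard argument. The observation that the heart $\Coh^s(X)$ depends only on $s$ is the right starting point, and the verification that $Z_{s,q}$ is a weak stability function using Li's inequality is precisely what is needed. One small imprecision: you write that the non--line-bundle stable factors $F'$ of slope $s$ satisfy $q(F') \le \frac{1}{2}s^2 - \frac{3}{40}$, but Li's theorem only asserts $(s(F'), q(F')) \notin R_{\frac{3}{20}}$, which for $s$ near an integer $k$ may instead give the tangent-line bound $q(F') \le ks - \frac{k^2}{2}$. This does not affect the conclusion, since in either case $q > q(F')$ whenever $(s,q) \in R_{\frac{3}{20}}$ (the region is upward-closed in $q$ for fixed $s$), but it is worth being precise about which piece of the boundary is binding. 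Your support-property discussion is somewhat schematic --- the actual construction of the adapted quadratic form and the propagation from $\mu_H$-stable sheaves to tilt-semistable objects are carried out in detail in \cite{Li_Fano3} --- but the plan you describe is the right one.
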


Now using the same strategy as in \cite{BLMS} we can induce stability conditions on the Kuznetsov components \eqref{eq_Ku1},\eqref{eq_Ku2},\eqref{eq_Ku3} from $\sigma_{s,q}$ for certain values of $(s,q) \in R_{\frac{3}{20}}$. As done in \eqref{eq_weakstabcond}, we need to tilt a second time. For $\mu \in \R$, we  denote by $\Coh_{s,q}^\mu(X)$ the heart obtained by tilting $\Coh^s(X)$ with respect to $\sigma_{s,q}$ at $\mu$. Then \cite[Proposition 2.15]{BLMS}, which applies in the same way to the reparametrized tilt stability conditions, implies that $\sigma^{\mu}_{s,q}=(\Coh_{s,q}^\mu(X), Z^\mu_{s,q})$ is a weak stability condition on $\D(X).$ 

For $i=1,2,3$, we set 
$$\A(s,q):=\Coh^\mu_{s,q}(X) \cap \Ku(X)_i$$
and 
\begin{equation} \label{eq_defodZsq}
Z(s,q):=Z^\mu_{s,q}|_{\Ku(X)_i}    
\end{equation}
where $Z^\mu_{s,q}=\frac{1}{u}Z_{s,q}$ and $\mu=-\frac{\Re u}{\Im u}$. 
We also note that the exceptional bundles in the semiortoghonal decompositions \eqref{eq_Ku1}, \eqref{eq_Ku2}, \eqref{eq_Ku3} are on the boundary of $R_{\frac{3}{20}}$ as
$$\ch_{\leq 2}(\OO_X(kH))=(1,kH,\frac{k^2}{2}H^2), \quad \ch_{\leq 2}(\UU_X^\vee)=(2, H, \frac{1}{10}H^2),$$ $$\ch_{\leq 2}(\UU_X)=(2, -H, \frac{1}{10}H^2), \quad  \ch_{\leq 2}(\UU_X(-H))=(2, -3H, \frac{21}{10}H^2).$$

\begin{prop}
\label{prop_stabcondKu}
Let $(s,q)$ be points in the region $R_{\frac{3}{20}}$. 
\begin{enumerate}
    \item If $(s,q)$ is below the segment connecting $\widetilde{v}_H(\OO_X(-H))$ and $\widetilde{v}_H(\UU_X)$, then the pair $\sigma(s,q)=(\A(s,q), Z(s,q))$ defines a Bridgeland stability condition on $\Ku(X)_1$ with respect to $\Lambda^2_H$ for $\mu \in \R$ satisfying $\mu_{s,q}(\OO_X(-H)[1]) \leq \mu < \mu_{s,q}(\UU_X)$.
    \item If $(s,q)$ is below the segment connecting $\widetilde{v}_H(\OO_X)$ and $\widetilde{v}_H(\UU_X)$, then the pair $\sigma(s,q)=(\A(s,q), Z(s,q))$ defines a Bridgeland stability condition on $\Ku(X)_2$ with respect to $\Lambda^2_H$ for $\mu \in \R$ satisfying $\mu_{s,q}(\UU_X[1]) \leq \mu < \mu_{s,q}(\OO_X)$.
    \item If $(s,q)$ is below the segment connecting $\widetilde{v}_H(\OO_X)$ and $\widetilde{v}_H(\UU_X^\vee)$, then the pair $\sigma(s,q)=(\A(s,q), Z(s,q))$ defines a Bridgeland stability condition on $\Ku(X)_3$ with respect to $\Lambda^2_H$ for $\mu \in \R$ satisfying $\mu_{s,q}(\OO_X[1]) \leq \mu < \mu_{s,q}(\UU_X^\vee)$.
\end{enumerate}
\end{prop}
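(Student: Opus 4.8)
The plan is to apply the BLMS criterion (Proposition \ref{prop_criterion}) to each of the three semiorthogonal decompositions \eqref{eq_Ku1}, \eqref{eq_Ku2}, \eqref{eq_Ku3}, using the weak stability condition $\sigma^\mu_{s,q}=(\Coh^\mu_{s,q}(X), Z^\mu_{s,q})$ as the ambient weak stability condition. For each part $i=1,2,3$ the exceptional collection is the pair of bundles appearing in the corresponding decomposition, so I must verify the three hypotheses of Proposition \ref{prop_criterion} for these objects and then check the nonvanishing condition $Z(s,q)(E)\neq 0$ on $\A(s,q)=\Coh^\mu_{s,q}(X)\cap\Ku(X)_i$. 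The role of the constraint on $\mu$ (for instance $\mu_{s,q}(\OO_X(-H)[1])\le\mu<\mu_{s,q}(\UU_X)$ in part (1)) is precisely to guarantee that both exceptional objects, after the appropriate shift, lie in the tilted heart $\Coh^\mu_{s,q}(X)$; and the role of the position of $(s,q)$ below the indicated segment is to control the phases so that the Serre-functor hypothesis (2) holds.

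First I would verify hypotheses (1) and (2). Using the reduced Chern characters listed just before the statement, each exceptional bundle $G$ (e.g. $\UU_X$, $\OO_X(-H)$, $\OO_X$, $\UU_X^\vee$) is $\sigma_{s,q}$-(semi)stable because its point $\widetilde v_H(G)$ lies on the boundary of $R_{\frac{3}{20}}$, and one reads off its $\mu_{s,q}$-slope from Figure \eqref{fig_0} as the gradient of the line joining $(s,q)$ to $\widetilde v_H(G)$. The tilting interval for $\mu$ is chosen so that, after the second tilt, each $E_i$ (or its shift $E_i[1]$, for the objects of negative slope in the collection) lands in $\Coh^\mu_{s,q}(X)$, giving (1). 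For (2) I would use that $S_{\D(X)}=\otimes\,\OO_X(-H)[3]$ on $\D(X)$, compute the reduced character of $S(E_i)$, locate it in the $(s,q)$-plane, and check it lies in $\A[1]$; this is where the geometric hypothesis that $(s,q)$ sits below the relevant segment is used, exactly as in the proof of Theorem \ref{thm_BLMSresult}. Hypothesis (3), $Z^\mu_{s,q}(E_i)\neq 0$, follows since the $E_i$ are slope-stable with nonzero reduced character, so their central charges are nonzero.

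The remaining and most delicate step is the nonvanishing $Z(s,q)(E)\neq 0$ for all $0\neq E\in\A(s,q)$. Following \cite{BLMS}, this reduces to showing that there is no nonzero object $E\in\Ku(X)_i$ whose class has $H^2\cdot\ch_1(E)=0$ together with the real part of $Z_{s,q}$ vanishing, which by the support property and the stronger Bogomolov--Gieseker inequality valid in $R_{\frac{3}{20}}$ (Proposition \ref{prop_Libound}, from \cite{Li_Fano3}) forces $E$ to be generated by the exceptional objects — contradicting $E\in\DD^\perp$. I expect this to be the main obstacle, since it is precisely here that enlarging the region over Li's boundary is exploited: the improved inequality excludes classes that would otherwise lie in $\ker Z$, and one must confirm the argument of \cite[Section 6]{BLMS} goes through verbatim for all $(s,q)\in R_{\frac{3}{20}}$ in the indicated subregions and for the three distinct Kuznetsov components. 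Once nonvanishing is established, Proposition \ref{prop_criterion} yields that $\sigma(s,q)=(\A(s,q),Z(s,q))$ is a Bridgeland stability condition on $\Ku(X)_i$, completing all three cases.
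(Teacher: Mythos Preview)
Your overall strategy is correct and matches the paper: apply the BLMS criterion (Proposition~\ref{prop_criterion}) to the weak stability condition $\sigma^\mu_{s,q}$ for each of the three semiorthogonal decompositions. Your treatment of hypotheses (1)--(3) is essentially right: the exceptional bundles and their Serre images lie on $\partial R_{\frac{3}{20}}$, hence are $\sigma_{s,q}$-stable by \cite[Corollary~3.11]{BMS}, and the constraint on $(s,q)$ below the relevant segment is what gives the slope ordering (e.g.\ $\mu_{s,q}(\OO_X(-H)[1])<\mu_{s,q}(\UU_X)$ in case (1)) needed to make the interval for $\mu$ nonempty and to place $E_i$, $S(E_i)[-1]$ in $\Coh^\mu_{s,q}(X)$.

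The gap is in your nonvanishing step. You describe it as ``the most delicate step'' and propose to exclude classes in $\ker Z$ via the stronger Bogomolov--Gieseker inequality, concluding that any such $E$ ``would be generated by the exceptional objects.'' This is not the right mechanism, and in fact the argument is much simpler than you expect. By \cite[Lemma~2.16]{BLMS}, an object $E\in\Coh^\mu_{s,q}(X)$ with $Z^\mu_{s,q}(E)=0$ is already an object of $\Coh^s(X)$ with $Z_{s,q}(E)=0$; but the only such objects are torsion sheaves supported on points (this is a general feature of tilt stability, independent of Li's bound). For any such sheaf $T$ one has $\Hom(\OO_X,T)\neq 0$ (or $\Hom(\OO_X(H),T)\neq 0$ in case (3)), so $T\notin\Ku(X)_i$. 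This immediately gives $Z(s,q)(E)\neq 0$ for all nonzero $E\in\A(s,q)$, and Proposition~\ref{prop_criterion} applies. No appeal to the support property or to excluding numerical classes is needed here; Li's inequality is used only to enlarge the region where $\sigma_{s,q}$ is a weak stability condition (Proposition~\ref{prop_Libound}), not in the nonvanishing argument itself.
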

In Figure \ref{fig_2} we represent the regions where there are induced stability conditions as in Proposition \ref{prop_stabcondKu}.

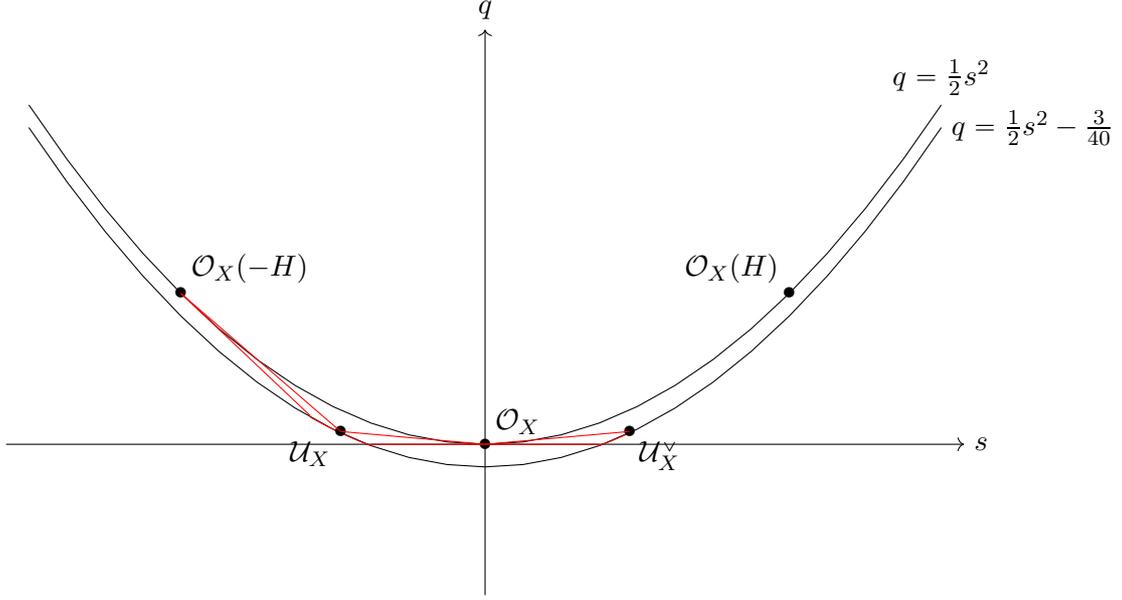
\begin{figure}[htb]
\centering
\begin{tikzpicture}[domain=-6:6]
\draw[->] (-6.3,0) -- (6.3,0) node[right] {$s$};
\draw[->] (0,-2) -- (0, 5.5) node[above] {$q$};
\draw plot (\x,{0.125*\x*\x}) node[above] {$q = \frac{1}{2}s^{2}$};
\coordinate (O) at (0,0);
\node  at (O) {$\bullet$};
\draw (O) node [above right]  {$\mathcal{O}_{X}$};

\draw plot (\x,{0.125*\x*\x -0.3}) node[right] {$q = \frac{1}{2}s^{2}- \frac{3}{40}$};

\coordinate (OH) at (-4,2);
\node  at (OH) {$\bullet$};
\draw (-4,2) node [above right]  {$\mathcal{O}_{X}(-H)$};

\coordinate (OK) at (4,2);
\node  at (OK) {$\bullet$};
\draw (4,2) node [above left]  {$\mathcal{O}_{X}(H)$};

\coordinate (M) at (-2.3,0.38);

\coordinate (N) at (2.3,0.38);

\draw[red] (OH)--(M) ;

\coordinate (A) at (-1.5,0);

\coordinate (B) at (1.5,0);
\draw[red] (A)--(B) ;

\draw[domain=-2.3:-1.5,color=red] plot (\x,{0.125*\x*\x -0.3});
\draw[domain=1.5:1.9,color=red] plot (\x,{0.125*\x*\x -0.3});

\coordinate (U) at (-1.9,0.17); 
\node at (U) {$\bullet$};  
\draw (U) node [below left]  {$\mathcal{U}_{X}$}; 

\coordinate (UV) at (1.9,0.17); 
\node at (UV) {$\bullet$};  
\draw (UV) node [below right]  {$\mathcal{U}_{X}^\vee$}; 
  
\draw[red] (OH)--(U); 
\draw[red] (U)--(O); 
\draw[red] (UV)--(O);

\end{tikzpicture}

\caption{We represent in red the boundary of the regions defined in Proposition \ref{prop_stabcondKu}. \label{fig_2}}  
\end{figure}

\begin{proof}
This is a refinement of \cite[Theorem 6.8]{BLMS}, where the statement is proved in the case of $\Ku(X)_1$ for $(s,q)$ above the parabola $q-\frac{1}{2}s^2=0$ and $\mu$ as in item 1. 

We study the case of $\Ku(X)_1$, the others can be treated analogously. Note that $\UU_X$, $\OO_X$, $\UU_X(-H)$, $\OO_X(-H)$ are slope stable sheaves with slope $-\frac{1}{2}$, $0$, $-\frac{3}{2}$, $-1$, respectively. Thus $\UU_X$, $\OO_X$, $\UU_X(-H)[1]$, $\OO_X(-H)[1]$ belong to $\Coh^s(X)$ for $-1 \leq s < -\frac{1}{2}$. Since these objects are on the boundary of $R_{\frac{3}{20}}$, by \cite[Corollary 3.11]{BMS} we have that $\UU_X$, $\OO_X$, $\UU_X(-H)[1]$, $\OO_X(-H)[1]$ are $\sigma_{s,q}$-stable in $\Coh^s(X)$. For $(s,q)$ as in the assumptions of item 1, by a direct computation or comparing the slopes using the picture, we see that
$$\mu_{s,q}(\UU_X(-H)[1]) < \mu_{s,q}(\OO_X(-H)[1]) < \mu_{s,q}(\UU_X) < \mu_{s,q}(\OO_X).$$
Thus for $\mu$ as in the statement, we have $\UU_X$, $\OO_X$, $\UU_X(-H)[2]$, $\OO_X(-H)[2]$ in $\Coh^\mu_{s,q}(X)$. Finally, by \cite[Lemma 2.16]{BLMS} objects in $\Coh^\mu_{s,q}(X)$ with vanishing central charge $Z^{\mu}_{s,q}$ are objects in $\Coh^s(X)$ with vanishing central charge $Z_{s,q}$, which are torsion sheaves supported on points. Since for such a sheaf $T$ we always have $\Hom(\OO_X, T) \neq 0$, we conclude that $T$ does not belong to $\Ku(X)_1$. Then Proposition \ref{prop_criterion} implies the statement.
\end{proof}

Note that we omit $\mu$ from the notation of the induced stability condition. In fact, $\sigma(s,q)$ does not depend on $\mu$, up to the action of $\widetilde{\text{GL}}^+(2,\R)$, as we show in the next lemma.

\begin{lemma} \label{lem_notdependonmu} 
Fix $i=1,2,3$. Let $(s,q)$ be a point in $R_{\frac{3}{20}}$, $\mu>\mu' \in \R$ satisfying the conditions in item (i) of Proposition \ref{prop_stabcondKu}. Then the stability condition induced from $\sigma^\mu_{s,q}$ is the same as the one induced from $\sigma^{\mu'}_{s,q}$, up to the $\widetilde{\emph{GL}}^+(2,\R)$-action.
\end{lemma}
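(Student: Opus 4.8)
The plan is to show that the two weak stability conditions $\sigma^\mu_{s,q}$ and $\sigma^{\mu'}_{s,q}$ on $\D(X)$ from which the induced stability conditions are built lie in a single $\widetilde{\mathrm{GL}}^+_2(\R)$-orbit, and then to transport this relation to $\Ku(X)_i$ by restriction. First I would record that the two central charges differ by a rotation. Writing $u,u'$ for the unit vectors in the upper half plane attached to $\mu,\mu'$ as in the definition of $Z^\mu_{s,q}=\frac1u Z_{s,q}$, one has $Z^\mu_{s,q}=\frac{u'}{u}\,Z^{\mu'}_{s,q}$; since $|u|=|u'|=1$, multiplication by $u'/u$ is a rotation, hence an element $M\in\mathrm{GL}^+_2(\R)$. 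Thus the central charges agree after acting by $M$.

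Next I would promote $M$ to an element $\tilde g=(g,M)\in\widetilde{\mathrm{GL}}^+_2(\R)$ whose phase function $g$ records the passage from the tilt at slope $\mu'$ to the tilt at slope $\mu$. By construction both $\Coh^\mu_{s,q}(X)$ and $\Coh^{\mu'}_{s,q}(X)$ are tilts of $\Coh^s(X)$ with respect to $\sigma_{s,q}$ (at the slopes $\mu$ and $\mu'$), so they are tilts of one another. This is exactly the situation in which the combined operation of tilting the heart and rotating the central charge is realized by the $\widetilde{\mathrm{GL}}^+_2(\R)$-action: acting on $\sigma^{\mu'}_{s,q}$ by $\tilde g$ produces a weak stability condition with central charge $Z^\mu_{s,q}$ and heart a tilt of $\Coh^{\mu'}_{s,q}(X)$. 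Since this heart and $\Coh^\mu_{s,q}(X)$ have the same central charge and are related by tilting (up to shift), \cite[Lemma 8.11]{BMS} forces $\sigma^\mu_{s,q}=\sigma^{\mu'}_{s,q}\cdot\tilde g$ as weak stability conditions on $\D(X)$; here the hypotheses $\mu>\mu'$ within the range of Proposition \ref{prop_stabcondKu} guarantee that the intermediate tilt genuinely interpolates between the two hearts, so no exceptional bundle enters or leaves along the way.

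Finally I would restrict to $\Ku(X)_i$. The $\widetilde{\mathrm{GL}}^+_2(\R)$-action only reparametrizes phases and transforms the central charge, so it commutes with the formation of the induced data $\A(s,q)=\Coh^\bullet_{s,q}(X)\cap\Ku(X)_i$ and $Z(s,q)=Z^\bullet_{s,q}|_{\Ku(X)_i}$; since both $\mu$ and $\mu'$ lie in the range of Proposition \ref{prop_stabcondKu}, both restrictions are genuine stability conditions on $\Ku(X)_i$. Therefore the relation $\sigma^\mu_{s,q}=\sigma^{\mu'}_{s,q}\cdot\tilde g$ descends to the corresponding relation between the two induced stability conditions on $\Ku(X)_i$, which is the assertion. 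The step I expect to be the main obstacle is the second one: making precise that tilting together with the rotation of the central charge is exactly an element of $\widetilde{\mathrm{GL}}^+_2(\R)$, i.e.\ constructing the phase function $g$ compatibly with the tilt and checking that the resulting heart is a tilt of the target so that \cite[Lemma 8.11]{BMS} applies. Once the central charges are matched and the hearts are identified as mutual tilts, the passage to $\Ku(X)_i$ is formal, and the only delicate bookkeeping is to remain inside the slope range of Proposition \ref{prop_stabcondKu}.
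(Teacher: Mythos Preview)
Your overall strategy matches the paper's, but you execute it at the wrong level. You try to establish $\sigma^\mu_{s,q}=\sigma^{\mu'}_{s,q}\cdot\tilde g$ on $\D(X)$ and then restrict; the paper instead restricts first and carries out the entire comparison on $\Ku(X)_i$. This reordering is not cosmetic: two genuine issues arise.

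First, invoking \cite[Lemma 8.11]{BMS} on $\D(X)$ is not legitimate. That lemma is stated for Bridgeland stability conditions, and $\sigma^\mu_{s,q}$, $\sigma^{\mu'}_{s,q}$ are only \emph{weak} stability conditions on $\D(X)$: skyscraper sheaves have $Z^\mu_{s,q}=0$, so two hearts with the same central charge which are tilts of each other need not coincide. Relatedly, the $\widetilde{\mathrm{GL}}^+_2(\R)$-action is defined via the slicing, and a slicing is not canonically attached to a weak stability condition.

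Second, your final step---that restriction to $\Ku(X)_i$ commutes with the $\widetilde{\mathrm{GL}}^+_2(\R)$-action---is not formal. An object of $\Ku(X)_i$ that is $\sigma(s,q,\mu')$-semistable need not be $\sigma^{\mu'}_{s,q}$-semistable in $\D(X)$, so the slicing on $\Ku(X)_i$ is not simply the intersection of the ambient slicing with $\Ku(X)_i$.

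The paper's route avoids both problems. It first checks directly that $\Coh^\mu_{s,q}(X)\subset\langle\Coh^{\mu'}_{s,q}(X),\Coh^{\mu'}_{s,q}(X)[1]\rangle$ (an elementary argument with $\sigma_{s,q}$-semistable factors). It then invokes \cite[Lemma 4.3]{BLMS} to pass this containment to the restricted hearts, obtaining $\A(s,q,\mu)\subset\langle\A(s,q,\mu'),\A(s,q,\mu')[1]\rangle$ on $\Ku(X)_i$. Now both sides are hearts of \emph{genuine} stability conditions with central charges related by the rotation $M=u/u'$, and \cite[Lemma 8.11]{BMS} applies correctly. The missing ingredient in your argument is precisely \cite[Lemma 4.3]{BLMS}, which is what makes the restriction step work; once you have it, you should apply \cite[Lemma 8.11]{BMS} on $\Ku(X)_i$ rather than on $\D(X)$.
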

\begin{proof}
Denote by $\sigma(s,q,\mu)$ and $\sigma(s,q,\mu')$ the induced stability conditions on $\Ku(X)_i$ corresponding to the choice of $\mu$ and $\mu'$, respectively. We claim that
$$\Coh^\mu_{s,q}(X) \subset \langle \Coh^{\mu'}_{s,q}(X), \Coh^{\mu'}_{s,q}(X)[1] \rangle.$$
Indeed, consider $F \in \Coh^s(X)$ semistable with $\mu_{s,q}(F)> \mu$, which is an object in $\Coh^{\mu}_{s,q}(X)$. Then $\mu_{s,q}(F)>\mu'$, so $F \in \Coh^{\mu'}_{s,q}(X)$. Otherwise, consider $F \in \Coh^s(X)$ semistable with $\mu_{s,q}(F) \leq \mu$, so $F[1] \in \Coh^{\mu}_{s,q}(X)$. If $\mu_{s,q}(F) \leq \mu'$, then $F[1] \in \Coh^{\mu'}_{s,q}(X)$, while if $\mu_{s,q}(F) > \mu'$, then $F[1] \in \Coh^{\mu'}_{s,q}(X)[1]$. By the definition of $\Coh^{\mu}_{s,q}(X)$, we deduce the claim. 

As a consequence, we have the same relation between the restrictions of the hearts on $\Ku(X)_i$ by \cite[Lemma 4.3]{BLMS}, i.e.\
$$\AA(s,q,\mu) \subset \langle \AA(s,q,\mu'),  \AA(s,q,\mu')[1]\rangle.$$

By definition $Z^{\mu}_{s,q}= \frac{1}{u}Z_{s,q}$ and  $Z^{\mu'}_{s,q}= \frac{1}{u'}Z_{s,q}$, for unit vectors $u, u'$ in the upper half plane. Recall the generators $b_1$ and $b_2$ of $\NN(\Ku(X)_1)$ defined in \eqref{eq_basis}. Since $\Ku(X)_3=\Ku(X)_1(H)$ and $\Ku(X)_2=\L_{\OO_X}(\Ku(X)_3)$, we have that 
\begin{align} \label{eq_d12}
d_1:=b_1(H)=(1, H, \frac{1}{5}H^2, -\frac{5}{6}),\\
d_2:=b_2(H)=(0, H, \frac{2}{5}H^2, -\frac{5}{6}) \nonumber
\end{align}
form a basis of $\NN(\Ku(X)_3)$, and 
\begin{align} \label{eq_c12}
c_1:=(\L_{\OO_X})_*(d_1)=(-3, H, \frac{1}{5}H^2, -\frac{5}{6}),\\
c_2:=(\L_{\OO_X})_*(d_2)=(-4, H, \frac{2}{5}H^2, -\frac{5}{6}) \nonumber
\end{align}
for $\NN(\Ku(X)_2)$. An easy computation shows that multiplying by $1/u$ and $1/u'$ does not change the orientation of the basis $Z_{s,q}(b_1), Z_{s,q}(b_2)$ of $\C$. Thus the basis $Z^{\mu}_{s,q}(b_1)$, $Z^{\mu}_{s,q}(b_2)$ and $Z^{\mu'}_{s,q}(b_1)$, $Z^{\mu'}_{s,q}(b_2)$ have the same orientation. Analogous comments hold for $d_1, d_2$ and $c_1, c_2$. 

Note that $Z^{\mu}_{s,q}= \frac{u'}{u}Z^{\mu'}_{s,q}$, thus setting $M:=\frac{u}{u'}$, we have $Z(s,q,\mu)= M^{-1}Z(s,q,\mu')$ and there exists a cover $\widetilde{g}=(g, M) \in \widetilde{\text{GL}}^+(2,\R)$ such that $\sigma(s,q,\mu') \cdot \widetilde{g}=(\AA', M^{-1}Z(s,q,\mu')=Z(s,q,\mu))$, where $$\AA' \subset \langle \AA(s,q,\mu'), \AA(s,q,\mu')[1]  \rangle.$$
It follows that the stability conditions $\sigma(s,q,\mu)$ and  $\sigma(s,q,\mu') \cdot \widetilde{g}$ have the same central charge and their hearts are tilt of the same heart $\AA(s,q,\mu')$. \cite[Lemma 8.11]{BMS} implies that they are the same stability condition.  
\end{proof}

We end this section by showing that the induced stability conditions on each $\Ku(X)_i$ are in the same orbit with respect to the action of $\widetilde{\text{GL}}^+(2,\R)$. 

\begin{prop} \label{prop_sameorbit}
Fix $i=1,2,3$. The stability conditions induced in item (i) of Proposition \ref{prop_stabcondKu} on $\Ku(X)_i$ are in the same orbit with respect to the $\widetilde{\emph{GL}}^+(2,\R)$- action.
\end{prop}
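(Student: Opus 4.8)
The plan is to show that for a fixed $i$, any two induced stability conditions $\sigma(s,q)$ and $\sigma(s',q')$ on $\Ku(X)_i$, where $(s,q)$ and $(s',q')$ range over the relevant region of Proposition \ref{prop_stabcondKu}, lie in a single $\widetilde{\text{GL}}^+(2,\R)$-orbit. The strategy follows the template of the (commented-out) Proposition 3.6 / Lemma 3.8 / Lemma 3.9 of \cite{PY}: it suffices to produce, for any target point $(s',q')$, an element $\widetilde{g} \in \widetilde{\text{GL}}^+(2,\R)$ carrying $\sigma(s,q)$ to a stability condition with the \emph{same} central charge as $\sigma(s',q')$ and a heart that is a tilt of the heart of $\sigma(s,q)$ (possibly shifted); then \cite[Lemma 8.11]{BMS} forces the two stability conditions to coincide, exactly as in the proof of Lemma \ref{lem_notdependonmu}.

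The first step is the central-charge comparison. Using the explicit bases $b_1,b_2$ of $\NN(\Ku(X)_1)$ from \eqref{eq_basis} (and $d_1,d_2$, $c_1,c_2$ from \eqref{eq_d12}, \eqref{eq_c12} for $i=3,2$), I would compute $Z_{s,q}(b_1)$ and $Z_{s,q}(b_2)$ as functions of $(s,q)$ and assemble them into a real $2\times 2$ matrix $A(s,q)$. The claim I need is that $\det A(s,q) > 0$ throughout the region, so that the change of basis from $\{Z_{s,q}(b_j)\}$ to $\{Z_{s',q'}(b_j)\}$ is given by a matrix $M \in \text{GL}^+(2,\R)$. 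This is the analogue of Lemma 3.9 of \cite{PY}: there the determinant was a quadratic in $\beta$ with no real roots, hence positive. Here, since $Z_{s,q}$ is affine-linear in the variables $(s,q)$ with $q > \tfrac12 s^2$ on the region, I expect $\det A(s,q)$ to again reduce to a manifestly positive expression (the positivity is essentially the support property / Bogomolov inequality in disguise), and I would record it as a short direct computation.

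The second step handles the hearts. I would argue, as in Lemma \ref{lemma_heartchangingalphabeta} of \cite{PY}, that moving the pair $(s,q)$ inside the region changes $\Coh^s(X)$ (and hence, after the double tilt, $\AA(s,q)$) only by a tilt: when the first tilting parameter $s$ is unchanged the heart $\Coh^s(X)$ is literally the same, and varying $s$ within a suitable range produces a tilt of the original heart or of its shift, by the standard fact that tilting slope stability at nearby slopes gives tilts of the same heart. By \cite[Lemma 4.3]{BLMS} this relation descends to the restrictions $\AA(s,q)$ on $\Ku(X)_i$, so $\AA(s',q')$ is a tilt of $\AA(s,q)$ (up to shift). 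Having chosen $\widetilde{g}=(g,M)$ lifting the matrix $M$ from Step 1, the stability condition $\sigma(s,q)\cdot\widetilde{g}$ has central charge $M^{-1}Z(s,q)=Z(s',q')$ and a heart that is a tilt of $\AA(s,q)$; comparing with $\sigma(s',q')$, which has the same central charge and a heart that is also a tilt of $\AA(s,q)$, \cite[Lemma 8.11]{BMS} concludes that $\sigma(s,q)\cdot\widetilde{g}=\sigma(s',q')$.

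The main obstacle I anticipate is not the orbit argument itself, which is structurally identical to \cite{PY}, but verifying the two technical inputs uniformly over the enlarged region $R_{\frac{3}{20}}$ rather than the small neighborhood of $\beta=-1$ used in \cite{BLMS,PY}. Concretely, the determinant positivity of Step 1 must be checked on the whole triangular region of Proposition \ref{prop_stabcondKu} (below the relevant segments), and the heart-comparison of Step 2 relies on the region being connected so that one can pass between any two points by moving $s$ monotonically; if the admissible $\mu$-interval degenerates at the boundary segments one must stay strictly inside, or invoke Lemma \ref{lem_notdependonmu} to absorb the dependence on $\mu$. Once connectedness and the sign of $\det A(s,q)$ are in hand, the result follows by transitivity of the $\widetilde{\text{GL}}^+(2,\R)$-action along any path in the region.
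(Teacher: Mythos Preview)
Your proposal is correct and follows essentially the same approach as the paper: the paper also reduces to (i) showing $\det A(s,q)>0$ on the whole region (computed explicitly as $q+\tfrac{3}{5}s+\tfrac{3}{10}$, positive because $(s,q)$ lies above the parabola $q=\tfrac12 s^2-\tfrac{3}{40}$) and (ii) a heart-comparison lemma (Lemma~\ref{lemma_heartchangingsq}) showing $\A(s',q')$ is a tilt of $\A(s,q)$ when $s'\geq s$, then concludes via \cite[Lemma~8.11]{BMS}. The only refinement in the paper beyond your sketch is that it fixes a single $\mu=-\tfrac{9}{10}$ valid throughout the region (using Lemma~\ref{lem_notdependonmu}) and proves the heart-comparison by identifying $\Coh^\mu_{s,q}(X)$ with a slice $\PP_{s,q}(\phi_u,\phi_u+1]$, which makes the tilt relation transparent under varying $s$.
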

\begin{proof}
We explain the proof for $i=1$, the other cases are analogous.

Let $(s, q)$, $(s', q')$ as in Proposition \ref{prop_stabcondKu}(1). It is not restrictive to assume $s' \geq s$ and $q \geq q'$. By Lemma \ref{lemma_heartchangingsq} below, we only need to show that the central charges of $\sigma(s, q)$ and $\sigma(s', q')$ are in the same orbits with respect to the action of $\text{GL}^+(2, \R)$. Note that for every $(s, q)$ as in Proposition \ref{prop_stabcondKu}(1), we can choose to tilt at $\mu=-\frac{9}{10}$. Indeed, since $(s,q)$ is below the line $q=-\frac{9}{10}s-\frac{2}{5}$ passing through $\widetilde{v}_H(\OO_X(-H))$ and $\widetilde{v}_H(\UU_X)$, it satisfies the inequalities
\begin{equation*}
\begin{cases}
\mu_{s,q}(\OO_X(-H)[1])=\frac{\frac{1}{2}-q}{-1-s} \leq -\frac{9}{10} \\
\mu_{s,q}(\UU_X)=\frac{\frac{1}{10} -2q}{-1-2s} > -\frac{9}{10}.
\end{cases}    
\end{equation*}
By Lemma \ref{lem_notdependonmu}, the stability condition $\sigma(s,q)$ does not depend on the choice of $\mu$, so we can assume $\mu=-\frac{9}{10}$. In particular, $u=\frac{1}{\sqrt{181}}(9+10\sqrt{-1})$. 

Now consider the central charges $Z_{s,q}^\mu$ and $Z_{s',q'}^{\mu}$. Since multiplying by $\frac{1}{u}$ does not change the orientation, we reduce to compare the orientations of $Z_{s,q}$ and $Z_{s',q'}$ on the basis $b_1$, $b_2$. We have
$$Z_{s,q}(b_1)=10(q+\frac{3}{10})+10\sqrt{-1}(-s), \quad Z_{s,q}(b_2)= 10(\frac{3}{5}+\sqrt{-1}).$$ 
Then 
$$
\begin{vmatrix}
q+\frac{3}{10} & \frac{3}{5}\\
-s & 1
\end{vmatrix}= q+\frac{3}{5}s+\frac{3}{10} > \frac{1}{2}s^2 + \frac{3}{5}s + \frac{9}{40}>0,
$$
since $(s, q)$ is above the parabola $q=\frac{1}{2}s^2-\frac{3}{40}$. In particular, there exists $N \in \text{GL}^+(2, \R)$ such that $Z_{s', q'}= N^{-1} \cdot Z_{s,q}$. We write $N=\frac{1}{\text{det}(N^{-1})}\begin{pmatrix}
a & b \\
c & d
\end{pmatrix}$, where
$$a=\frac{10q +3 + 6s'}{10q+3+6s}, \quad b=\frac{6(q'-q)}{10q+3+6s}$$
$$c=\frac{10(s'-s)}{10q+3+6s}, \quad d=\frac{10q'+3+6s}{10q+3+6s}.$$
As a consequence, we have $Z_{s',q'}^{\mu}= M^{-1} Z_{s,q}^{\mu}$, where $M^{-1}= \frac{1}{u}N^{-1}u$ and there exists $(g, M) \in \widetilde{\text{GL}}^+(2, \R)$ such that $g(0, 1) \subset (0,2)$. This implies $\sigma(s,q) \cdot (g,M)= (\A', Z^{\mu}_{s',q'}|_{\NN(\Ku(X))_1})$ with
$$\A' \subset \langle \A(s,q), \A(s,q)[1] \rangle.$$
Thus the stability conditions $\sigma(s',q')$ and $\sigma(s, q) \cdot (g,M)$ have the same central charge and their hearts are tilt of $\A(s,q)$. We conclude that they are the same stability condition by \cite[Lemma 8.11]{BMS}.
\end{proof}

\begin{lemma} \label{lemma_heartchangingsq}
Fix $i=1,2,3$. Let $(s,q)$, $(s', q')$ as in Proposition \ref{prop_stabcondKu}(i). If $s<s'$, then $\A(s', q') \subset \langle \A(s,q), \A(s,q)[1] \rangle$, while if $s=s'$, then $\A(s', q')= \A(s,q)$.
\end{lemma}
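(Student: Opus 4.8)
The plan is to reproduce, in the reparametrized coordinates, the argument of \cite[Lemma 3.8]{PY}, after reducing the assertion to a comparison of the ambient double-tilted hearts on $\D(X)$. Exactly as in the proof of Lemma \ref{lem_notdependonmu}, \cite[Lemma 4.3]{BLMS} shows that an inclusion $\Coh^\mu_{s',q'}(X) \subset \langle \Coh^\mu_{s,q}(X), \Coh^\mu_{s,q}(X)[1] \rangle$ restricts, after intersecting with $\Ku(X)_i$, to the desired inclusion $\A(s',q') \subset \langle \A(s,q), \A(s,q)[1] \rangle$, and an equality of ambient hearts restricts to an equality of the $\Ku(X)_i$-hearts; moreover, by Lemma \ref{lem_notdependonmu} we may fix one admissible slope $\mu$ for both parameters (for instance $\mu = -\frac{9}{10}$ when $i=1$, cf.\ the proof of Proposition \ref{prop_sameorbit}). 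It therefore suffices to compare $\Coh^\mu_{s,q}(X)$ and $\Coh^\mu_{s',q'}(X)$, and this is done through the two tilts defining them.

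For the first tilt, $\Coh^s(X)$ is the tilt of $\Coh(X)$ at slope stability with threshold $\mu_H = s$; since $s < s'$, every slope-semistable sheaf of slope $> s'$ has slope $> s$ and every slope-semistable sheaf of slope $\le s$ has slope $\le s'$, so tracking the two shifts yields $\Coh^{s'}(X) \subset \langle \Coh^s(X), \Coh^s(X)[1] \rangle$, while for $s = s'$ the two hearts agree. The crux is then the second tilt: I would show that, once this shift is accounted for, the torsion pair cutting out $\Coh^\mu_{s',q'}(X)$ from $\Coh^{s'}(X)$ refines the one cutting out $\Coh^\mu_{s,q}(X)$ from $\Coh^s(X)$, i.e.\ that no $\sigma_{s,q}$-semistable object is displaced by more than one homological degree upon passing to the $\sigma_{s',q'}$-slicing. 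Here I would argue one semistable object $E$ at a time, using the description of $\mu_{s,q}(E)$ and $\mu_{s',q'}(E)$ as the gradients of the lines joining $(s,q)$, resp.\ $(s',q')$, to $(s(E),q(E))$ (Figure \ref{fig_0}), together with the Bogomolov--Gieseker inequality \eqref{eq_BGineq}, which confines $(s(E),q(E))$ to lie on or below the parabola $q = \frac{1}{2}s^2$.

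The main obstacle is precisely this second step: unlike the first tilt, it is taken with respect to a weak stability condition that varies together with the heart it tilts, so the two double tilts are not related by a single iterated tilt of a fixed heart, and one must bound the homological displacement of semistable objects across a genuine change of slicing. I expect the quantitative control to come from the hypothesis that $(s,q)$ and $(s',q')$ lie in the admissible region of Proposition \ref{prop_stabcondKu}(i) --- below the relevant segment and above Li's shifted parabola $q = \frac{1}{2}s^2 - \frac{3}{40}$ of \cite{Li_Fano3} --- which rules out the phase of any such $E$ sweeping across two consecutive integers, giving the single-degree tilt. Finally, in the case $s = s'$ the first tilt is unchanged, and the same phase-tracking shows that no semistable object of a class in $\NN(\Ku(X)_i)$ is displaced at all across the (purely vertical) change from $(s,q)$ to $(s',q')$, so the tilt relation collapses to the equality $\A(s,q) = \A(s',q')$; the absence of displacement reflects that the two central charges differ only by a real shear of $\NN(\Ku(X)_i)$ fixing the rank-zero generator, and is the refinement recorded in the statement.
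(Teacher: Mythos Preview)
Your reductions are right and match the paper: fixing a common $\mu$ via Lemma~\ref{lem_notdependonmu}, invoking \cite[Lemma 4.3]{BLMS} to pass from the ambient double-tilted hearts to $\Ku(X)_i$, and the inclusion $\Coh^{s'}(X)\subset\langle\Coh^s(X),\Coh^s(X)[1]\rangle$ for the first tilt. But you have not actually resolved what you yourself flag as the main obstacle, and your proposed route for the second tilt --- tracking the $\sigma_{s',q'}$-phases of each $\sigma_{s,q}$-semistable object via Bogomolov--Gieseker and the shape of the admissible region --- is only an expectation, not an argument.

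The paper bypasses this with one identification you are missing. Writing $\mathcal{P}_{s,q}$ for the slicing of the weak stability condition $\sigma_{s,q}$ and $\phi_u=\frac{1}{\pi}\arg(u)$, one proves that
\[
\Coh^\mu_{s,q}(X)=\mathcal{P}_{s,q}(\phi_u,\phi_u+1],
\]
by taking a $\sigma^\mu_{s,q}$-semistable object, decomposing it along the torsion pair $A[1]\to E\to B$ in $\Coh^s(X)$, and checking the $\sigma_{s,q}$-phases of the pieces (the only subtlety is $B$ with $Z_{s,q}(B)=0$, i.e.\ a skyscraper, which has phase $1$). With the same identity for $(s',q')$, the second tilt is nothing but the uniform relabeling $\phi\mapsto\phi+\phi_u$ applied to \emph{both} slicings; since such a common shift preserves the distance between $\mathcal{P}_{s,q}$ and $\mathcal{P}_{s',q'}$, the first-tilt relation $\mathcal{P}_{s',q'}(0,1]\subset\mathcal{P}_{s,q}(0,2]$ transports to $\mathcal{P}_{s',q'}(\phi_u,\phi_u+1]\subset\mathcal{P}_{s,q}(\phi_u,\phi_u+2]$, which is exactly $\Coh^\mu_{s',q'}(X)\subset\langle\Coh^\mu_{s,q}(X),\Coh^\mu_{s,q}(X)[1]\rangle$. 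No object-by-object phase bound, no Bogomolov--Gieseker input, and no appeal to Li's boundary is used here.

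For $s=s'$ your account is off-target: there is no ``real shear fixing the rank-zero generator'' in play. The paper simply observes that $\Coh^s(X)=\Coh^{s'}(X)$, so the same slicing argument gives $\Coh^\mu_{s,q}(X)=\Coh^\mu_{s',q'}(X)$ and hence $\A(s,q)=\A(s',q')$.
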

\begin{proof}
The argument is similar to the one used in the proof of \cite[Lemma 3.8]{PY}. Consider the case $i=1$, the other are analogous. By Lemma \ref{lem_notdependonmu} we can fix $\mu=-\frac{9}{10}$. We denote by $\PP_{s,q}$ the slicing defined by $\sigma_{s,q}$. We claim that $\Coh^{\mu}_{s,q}(X)= \PP_{s,q}(\phi_u, \phi_u+1]$, where $\phi_u=\frac{1}{\pi} \text{arg}(u)$. Indeed, assume $E \in \Coh^{\mu}_{s,q}(X)$ is $\sigma^{\mu}_{s,q}$-semistable. Then there is a triangle 
$A[1] \to E \to B$,
where $A \in \Coh^s(X)$ (resp.\ $B \in \Coh^s(X)$) and its $\sigma_{s,q}$-semistable factors have slope $\mu_{s,q} \leq \mu$ (resp.\ $>\mu$). If $Z_{s,q}(B) \neq 0$, then $A[1]$ has larger slope than $B$ with respect to $\sigma^{\mu}_{s,q}$. This would contradict the semistability of $E$, unless either $E=B$, or $E=A[1]$. If $E=B$, the $\sigma_{s,q}$-semistable factors of $E$ would have phase in the interval $(\phi_u, 1]$ by definition of $B$. Actually, this also shows $E$ is $\sigma_{s,q}$-semistable, as a destabilizing sequence of $E$ with respect to $\sigma_{s,q}$ would destabilize $E$ with respect to $\sigma^{\mu}_{s,q}$. A similar observation shows that $A[1] \in \PP_{s,q}(1, \phi_u+1]$. It remains to consider the case when $Z_{s,q}(B) = 0$, i.e.\ $B$ is a torsion sheaf supported on points. Then $B$ is $\sigma_{s,q}$-semistable of phase $1$. Since $A[1] \in \PP_{s,q}(1, \phi_u+1]$, we conclude that $E \in \PP_{s,q}(\phi_u, \phi_u+1]$. This shows $\Coh^\mu_{s,q}(X) \subset \PP_{s,q}(\phi_u, \phi_u+1]$.   Since those are hearts of bounded t-structures, we deduce that they are equal.

Now if $s'>s$, it is easy to see that $\Coh^{s'}(X)$ is a tilt of $\Coh^s(X)$, i.e.\ $\Coh^{s'}(X) \subset \langle \Coh^s(X), \Coh^s(X)[1] \rangle$. Equivalently, $\PP_{s',q'}(0, 1] \subset \PP_{s,q}(0,2]$. The action by multiplication with $u^{-1}$ preserves the distance of the slicings $\PP_{s,q}$ and $\PP_{s',q'}$, thus 
$$\Coh^{\mu}_{s',q'}(X)= \PP_{s',q'}(\phi_u, \phi_u+1] \subset \PP_{s,q}(\phi_u, \phi_u+2]= \langle \Coh^{\mu}_{s,q}(X), \Coh^{\mu}_{s,q}(X)[1] \rangle.$$
Consider $\A(s',q')= \Ku(X) \cap \Coh^{\mu}_{s',q'}(X)$. Since the cohomology with respect to the restricted heart of an objects $E \in \Ku(X)$ is the same as the cohomology in $\Coh^{\mu}_{s,q}(X)$ by \cite[Lemma 4.3]{BLMS}, we deduce that
$$\A(s', q') \subset \langle \A(s,q), \A(s, q)[1] \rangle.$$
If $s'=s$, we get $\Coh^{\mu}_{s',q'}(X)=\Coh^{\mu}_{s,q}(X)$, which implies $\A(s',q')=\A(s,q)$.
\end{proof}

\textbf{Notation:} In the next, we will use the subscript $s,q$ (resp.\ $\alpha, \beta$) when we refer to the reparametrized tilt stability condition (resp.\ to the classical tilt stability). If we work in the region above the parabola $q-\frac{1}{2}s^2=0$, we will prefer to use the classical tilt stability condition depending on $\alpha$ and $\beta$, and we will make use of the tilt stability below this parabola and above Li's boundary only where it is necessary.

We will denote by $\Coh^s(X)_{\mu_{s,q}>\mu}$ (resp.\ $\Coh^s(X)_{\mu_{s,q}\leq \mu}$) the subcategory of $\Coh^s(X)$ generated by $\mu_{s,q}$-semistable objects with slope $\mu_{s,q} > \mu$ (resp.\ $\leq \mu)$, and analogous notation with the subscript $\alpha, \beta$.

\subsection{Proof of Theorem \ref{thm_main}} \label{sec_proof}

Consider $\Ku(X)_3$ defined in \eqref{eq_Ku3}. By \cite[lemma 2.6]{Kuz_calabi}, since $S_X(-)=-\otimes \OO_X(-H)[3]$, the Serre functor $S_{\Ku(X)_3}$ on $\Ku(X)_3$ satisfies
\begin{equation}
\label{eq_defofSerrefunctor}    
S_{\Ku(X)_3}^{-1}(-)= \L_{\UU_X^\vee} \circ \L_{\OO_X(H)} \circ (- \otimes \OO_X(H))[-3]= (- \otimes \OO_X(H)) \circ \L_{\UU_X} \circ \L_{\OO_X}[-3].
\end{equation}
The goal of the next sections is to prove Theorem \ref{thm_main}, which follows from the result below.

\begin{thm}
\label{thm_mainsec3}
Let $\sigma(s_3,q_3)$ be a stability condition on $\Ku(X)_3$ as induced in Proposition \ref{prop_stabcondKu}(3). Then there exists $\widetilde{g} \in \widetilde{\emph{GL}}^+_2(\R)$ such that $$S^{-1}_{\Ku(X)_3} \cdot \sigma(s_3,q_3)= \sigma(s_3,q_3) \cdot \widetilde{g}.$$
\end{thm}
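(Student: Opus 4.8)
The plan is to follow the factorization of the inverse Serre functor recorded in \eqref{eq_defofSerrefunctor}, namely
$$S^{-1}_{\Ku(X)_3} = (-\otimes \OO_X(H)) \circ \L_{\UU_X} \circ \L_{\OO_X}[-3],$$
and to track the effect of each factor on the induced stability condition $\sigma(s_3,q_3)$, checking at every stage that the image lands in the $\widetilde{\mathrm{GL}}^+_2(\R)$-orbit of an induced stability condition on the appropriate Kuznetsov component $\Ku(X)_i$. The three semiorthogonal decompositions \eqref{eq_Ku1}, \eqref{eq_Ku2}, \eqref{eq_Ku3} are set up precisely so that the mutation functors interpolate between them: $\L_{\OO_X}$ should send $\Ku(X)_3$ to $\Ku(X)_2$ (recall $\Ku(X)_2 = \L_{\OO_X}(\Ku(X)_3)$), then $\L_{\UU_X}$ should send $\Ku(X)_2$ to $\Ku(X)_1$, and finally the twist $-\otimes \OO_X(H)$ sends $\Ku(X)_1$ back to $\Ku(X)_3 = \Ku(X)_1(H)$. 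So the scheme is to prove three separate ``orbit-preservation'' statements, one per factor, and to compose them. First I would establish, as a standalone lemma, that $\L_{\OO_X}$ carries $\sigma(s_3,q_3)$ on $\Ku(X)_3$ into the $\widetilde{\mathrm{GL}}^+_2(\R)$-orbit of some induced $\sigma(s_2,q_2)$ on $\Ku(X)_2$ from Proposition \ref{prop_stabcondKu}(2); then the analogous statement for $\L_{\UU_X}$ from $\Ku(X)_2$ to $\Ku(X)_1$; and finally the twist from $\Ku(X)_1$ to $\Ku(X)_3$.

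For each mutation the concrete argument should mirror the structure of the proofs of Proposition \ref{prop_sameorbit} and Lemma \ref{lem_notdependonmu}: one identifies the image heart as a tilt of an induced heart and checks that the central charges agree up to an element of $\mathrm{GL}^+_2(\R)$, then invokes \cite[Lemma 8.11]{BMS} to conclude the two stability conditions coincide. Concretely, on the level of hearts I would show that applying the mutation functor to $\A(s_3,q_3)$ produces a heart sitting inside $\langle \A(s',q'), \A(s',q')[1]\rangle$ for a suitable point $(s',q')$ in the region of Proposition \ref{prop_stabcondKu} attached to the target component; the constructions in Section \ref{sec_staboverLibound} (in particular the enlargement of the admissible region below the parabola, over Li's boundary) are exactly what gives enough room to land the image inside the induced family rather than outside it. On the level of central charges I would compute the action of each mutation on the numerical lattice using the explicit bases $b_1,b_2$ of $\NN(\Ku(X)_1)$ in \eqref{eq_basis}, together with $d_1,d_2$ of \eqref{eq_d12} and $c_1,c_2$ of \eqref{eq_c12}, which were introduced precisely to express $(\L_{\OO_X})_*$ and the twist on the relevant Grothendieck groups; the determinant computations should show the relevant orientation is preserved, furnishing the required $\mathrm{GL}^+_2(\R)$-matrix.

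Composing the three orbit-preservation statements yields an element $\widetilde g \in \widetilde{\mathrm{GL}}^+_2(\R)$ with $S^{-1}_{\Ku(X)_3}\cdot \sigma(s_3,q_3) = \sigma(s_3,q_3)\cdot \widetilde g$, which is the assertion of Theorem \ref{thm_mainsec3}; the passage to Theorem \ref{thm_main} then follows by noting that $S_{\Ku(X)}[-2]$ acts trivially on the orbit (using that $S_{\Ku(X)}$ is a shift composed with an involution and that $\widetilde g$ must in fact be trivial on the orbit, as will be pinned down by the uniqueness of the induced family up to $\widetilde{\mathrm{GL}}^+_2(\R)$), combined with Proposition \ref{prop_sameorbit} to transport the statement to an arbitrary $\sigma$ in the orbit. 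The main obstacle I expect is the middle step, the mutation $\L_{\UU_X}$ across the rank-two exceptional bundle $\UU_X$: this is exactly the difficulty flagged in the strategy section, since unlike the line-bundle mutations the functor $\L_{\UU_X}$ does not act by a simple twist, so controlling where the image heart sits requires a careful analysis of the (co)homology of $\L_{\UU_X}(E)$ for $E$ in the heart, understanding the $\sigma_{s,q}$-stability and phases of $\UU_X$ and $\OO_X$ near Li's boundary, and verifying that the mutation triangles do not push objects out of the admissible tilting region. Controlling the phases precisely enough to pin down the resulting heart as a tilt — rather than merely an extension-closure spread over too many shifts — is where the bulk of the genuine work will lie.
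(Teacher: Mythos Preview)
Your proposal is correct and follows essentially the same approach as the paper: factor $S^{-1}_{\Ku(X)_3}$ as in \eqref{eq_defofSerrefunctor}, establish orbit-preservation for each of $\L_{\OO_X}$, $\L_{\UU_X}$, and $-\otimes\OO_X(H)$ separately via heart-containment plus a central-charge comparison and \cite[Lemma 8.11]{BMS}, then compose. You have also correctly located the crux: the paper handles each mutation by choosing $(s,q)$ very close to the vertical wall of the relevant exceptional object (near $(0,0)$ for $\OO_X$, near $(-\tfrac12,\tfrac{1}{20})$ for $\UU_X$) and proving that, upon crossing, the only objects escaping the tilt window are copies of $\OO_X[2]$ or $\UU_X[2]$, which the left mutation then kills---this is exactly the phase control you flag as the main obstacle.
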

\noindent Then in Corollary \ref{cor_stabonfourfold} we show more precisely that $S_{\Ku(X)_3}[-2] \cdot \sigma(s_3,q_3)= \sigma(s_3,q_3)$, completing the proof of Theorem \ref{thm_main}.

Here we outline the strategy of the proof of Theorem \ref{thm_mainsec3}. The idea is to decompose $S^{-1}_{\Ku(X)_3}$ as in \eqref{eq_defofSerrefunctor} and study the action of $\L_{\OO_X}$ on $\sigma(s_3,q_3)$ and then of $\L_{\UU_X}$ on $\L_{\OO_X} \cdot \sigma(s_3,q_3)$. In fact, $\L_{\OO_X}$ (resp.\ $\L_{\UU_X}$) induces an equivalence between $\Ku(X)_3$ and $\Ku(X)_2$ (resp.\ $\Ku(X)_2$ and $\Ku(X)_1$), so $\L_{\OO_X} \cdot \sigma(s_3,q_3)$ and $\L_{\UU_X} \cdot \L_{\OO_X} \cdot \sigma(s_3,q_3)$ are stability conditions on $\Ku(X)_2$ and $\Ku(X)_1$, respectively. 

First, in Section \ref{sec_stabonKu3} we consider special values of $s_3$ and $q_3$ very close to $0$. Here it is not necessary to work with the reparametrized tilt stability conditions, so we use the notation with $\alpha$ and $\beta$. In particular we consider the stability condition $\sigma(\alpha,\epsilon)$, for $\epsilon>0$ very small and $0 < \alpha < \epsilon$. In Lemma \ref{lemma_leftmutationheart} we show that the heart $\L_{\OO_X}(\A(\alpha,\epsilon))$ on $\Ku(X)_2$ is a tilting of $\A(\alpha',-\epsilon')$ for $0< \epsilon' \leq \epsilon$, $0< \alpha' < \epsilon'$. The basic idea is that when moving from $\epsilon$ to $-\epsilon'$, the only problematic object in $\Coh^0_{\alpha,\epsilon}(X)$ is $\OO_X[2]$, which belongs to $\Coh^0_{\alpha',-\epsilon'}(X)[2]$. Then we show in Proposition \ref{prop_leftmutationstab} that the stability condition $\L_{\OO_X} \cdot \sigma(\alpha,\epsilon)$ on $\Ku(X)_2$ is the same as $\sigma(\alpha',-\epsilon')$ up to the $\widetilde{\text{GL}}^+_2(\R)$-action. This implies the same statement for every stability condition $\sigma(s_3,q_3)$ on $\Ku(X)_3$ (see Corollary \ref{cor_leftmutationstab}).

Next, in Section \ref{sec_stabonKu2} we follow the same argument for the stability conditions $\sigma(s_2,q_2)$ on $\Ku(X)_2$ and the left mutation $\L_{\UU_X}$. Here we need to work with the stability conditions over Li's boundary, as we need to consider $s_2$ very close to $-\frac{1}{2}$ and $q_2$ close to $\frac{1}{20}$. Analogously, we show in Lemma \ref{lemma_leftmutationheartafterU} that the heart $\L_{\UU_X}(\A(-\frac{1}{2}+\epsilon,q_2))$ on $\Ku(X)_1$ is a tilt of $\A(-\frac{1}{2}-\epsilon',q_2')$. This allows to show in Corollary \ref{cor_leftmutationstabafterU} that $\L_{\UU_X} \cdot \sigma(s_2,q_2)$ on $\Ku(X)_1$ is in the same orbit with respect to the $\widetilde{\text{GL}}^+_2(\R)$-action of the induced stability conditions $\sigma(s_1,q_1)$ on $\Ku(X)_1$.

Finally, we simply observe that acting via $(-) \otimes \OO_X(H)$ on a stability condition $\sigma(s_1,q_1)$ on $\Ku(X)_1$, we get $\sigma(s_1+1,q_1')$, namely a stability condition on $\Ku(X)_3$ in the same orbit of $\sigma(s_3,q_3)$. 

\subsection{Stability conditions on $\Ku(X)_3$ and action of $\L_{\OO_X}$} \label{sec_stabonKu3}

In this section we study the action of $\L_{\OO_X}$ on the stability conditions $\sigma(s_3,q_3)$ on $\Ku(X)_3$ defined in Proposition \ref{prop_stabcondKu}(3). The main result is Corollary \ref{cor_leftmutationstab}.

We start by considering $(s_3, q_3)$ close to $(0,0)$. For this reason, we can simply work with the usual parametrization of the tilt stability $\sigma_{\alpha,\beta}$ and $\beta=\epsilon >0$ very small. 

\begin{lemma}
\label{lemma_coh_epsilon}
Fix $\epsilon>0$ very small. Assume that $F \in \emph{Coh}^0_{\alpha, \epsilon}(X)$ for every $0 < \alpha < \epsilon$. Then there exist $\epsilon'>0$ very small and $0 < \alpha' < \epsilon'$ such that 
$$F \in \langle \emph{Coh}^0_{\alpha', -\epsilon'}(X), \emph{Coh}^0_{\alpha', -\epsilon'}(X)[1], \OO_X[2] \rangle.$$
\end{lemma}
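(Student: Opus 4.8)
The plan is to compare the two double-tilted hearts $\mathcal{B}^+:=\Coh^0_{\alpha,\epsilon}(X)$ and $\mathcal{B}^-:=\Coh^0_{\alpha',-\epsilon'}(X)$ by running through the chain of tilts that defines each of them and isolating the single object responsible for the discrepancy. First, since $\mathcal{B}^+$ is a tilt of $\Coh^\epsilon(X)$, any $F\in\mathcal{B}^+$ has $\Coh^\epsilon(X)$-cohomology in degrees $0,-1$, giving a triangle $T\to F\to F'[1]$ with $T=\mathcal{H}^0_{\Coh^\epsilon}(F)$ of $\sigma_{\alpha,\epsilon}$-slope $>0$ and $F'=\mathcal{H}^{-1}_{\Coh^\epsilon}(F)$ of $\sigma_{\alpha,\epsilon}$-slope $\le 0$. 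Second, as $\epsilon>-\epsilon'$ we have $\Coh^\epsilon(X)\subset\langle\Coh^{-\epsilon'}(X),\Coh^{-\epsilon'}(X)[1]\rangle$, with the degree $-1$ part generated by $\mu_H$-semistable sheaves of slope in the window $(-\epsilon',\epsilon]$. Third, since $\mathcal{B}^-$ is a tilt of $\Coh^{-\epsilon'}(X)$, one has $\Coh^{-\epsilon'}(X)\subset\langle\mathcal{B}^-[-1],\mathcal{B}^-\rangle$. Composing these three inclusions places $F$ in $\langle\mathcal{B}^-[2],\mathcal{B}^-[1],\mathcal{B}^-,\mathcal{B}^-[-1]\rangle$; it then remains to delete the term $\mathcal{B}^-[-1]$ and to show that the term $\mathcal{B}^-[2]$ contributes only copies of $\OO_X[2]$.

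The deletion of the extreme terms is governed by the picture of Figure \ref{fig_0}. In the $(s,q)$-plane the two hearts correspond to base points just to the right and just to the left of the origin $(0,0)=\widetilde{v}_H(\OO_X)$, both just above the parabola $q=\tfrac{1}{2}s^2$, and the slope of a semistable object is the gradient of the segment joining the base point to its reduced character; hence crossing $\beta=0$ can only reverse the condition ``slope $>0$ versus slope $\le 0$'' for objects whose reduced character lies near the origin. Tracking the two pieces of $F$ separately: the contribution to the term $\mathcal{B}^-[-1]$ comes from the part of $T$ lying in $\Coh^{-\epsilon'}(X)$ in degree $0$ but having $\sigma_{\alpha',-\epsilon'}$-slope $\le 0$; since every semistable factor of $T$ has $\sigma_{\alpha,\epsilon}$-slope $>0$ and its reduced character is bounded away from the origin, its slope stays $>0$ after crossing, so this term vanishes. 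Dually, the term $\mathcal{B}^-[2]$ is fed exactly by the window sheaves appearing in $F'$ that cross from $\mathcal{F}^0_{\alpha,\epsilon}$ into $\mathcal{T}^0_{\alpha',-\epsilon'}$, i.e.\ those whose side of the wall flips; by the same geometry these can only be sheaves whose reduced character degenerates to the origin. I would use the hypothesis $F\in\mathcal{B}^+$ for all $0<\alpha<\epsilon$ to make these sign conditions robust and to fix $\epsilon',\alpha'$ small enough (depending on $F$) that the window $(-\epsilon',\epsilon]$ meets no reduced character of a semistable factor of $F$ other than that of $\OO_X$.

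The main obstacle is the classification of these pivot objects: I must show that a $\mu_H$-semistable sheaf $E$ with $\mu_H(E)\in(-\epsilon',\epsilon]$, of $\sigma_{\alpha,\epsilon}$-slope $\le 0$ but $\sigma_{\alpha',-\epsilon'}$-slope $>0$, is isomorphic to $\OO_X^{\oplus k}$. Forcing the pivot onto the origin gives $H^2\cdot\ch_1(E)=0$ and $H\cdot\ch_2(E)=0$ with $\rk(E)>0$, so $E$ saturates the Bogomolov--Gieseker inequality \eqref{eq_BGineq}; combined with slope-stability, the smallness of the parameters, and $\Ext^1(\OO_X,\OO_X)=0$, this should pin down $E\cong\OO_X^{\oplus k}$. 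Granting this, the analysis above shows that the $\mathcal{B}^-$-cohomology of $F$ lives only in degrees $0,-1,-2$ with $\mathcal{H}^{-2}_{\mathcal{B}^-}(F)\cong\OO_X^{\oplus k}$, and the cohomology filtration exhibits $F$ as an iterated extension of $\mathcal{H}^0_{\mathcal{B}^-}(F)$, $\mathcal{H}^{-1}_{\mathcal{B}^-}(F)[1]$ and $\OO_X^{\oplus k}[2]$. This is precisely the asserted containment $F\in\langle\Coh^0_{\alpha',-\epsilon'}(X),\Coh^0_{\alpha',-\epsilon'}(X)[1],\OO_X[2]\rangle$.
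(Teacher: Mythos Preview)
Your overall strategy is exactly the paper's: split $F$ along the tilt as $A[1]\to F\to B$ with $\mu_{\alpha,\epsilon}^+(A)\le 0$ and $\mu_{\alpha,\epsilon}^-(B)>0$, show that $B$ lands in $\langle\Coh^{-\epsilon'}(X)_{\mu_{\alpha',-\epsilon'}>0},\Coh^{-\epsilon'}(X)[1]\rangle$, and show that $A[1]$ lands in $\langle\Coh^{-\epsilon'}(X)[1],\OO_X[2]\rangle$ by identifying the ``window'' factors of $A$ as copies of $\OO_X$. (Incidentally, your triangle is written in the wrong order: the torsion part sits on the right, i.e.\ $F'[1]\to F\to T$.) However, both of the two key technical steps are left as heuristics and do not go through as written.

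\textbf{The pivot classification.} You assert that a $\mu_H$-semistable sheaf $E$ with $\mu_H(E)=0$ and the two slope conditions must satisfy $H\cdot\ch_2(E)=0$ and then be $\OO_X^{\oplus k}$. Getting $\ch_2=0$ is fine (classical Bogomolov plus the pivot inequality does force it), but knowing $\ch_{\le 2}(E)=(d,0,0)$ and $\mu_H$-stability is far from enough to conclude $E\cong\OO_X$. The paper's argument (its Step~2) is substantially more delicate: one first observes that the slope condition forces $\GG[1]$ to be $\sigma_{\alpha,\epsilon}$-\emph{semistable}, then uses the generalized Bogomolov--Gieseker inequality of \cite{BMS} (valid here by \cite{Li_Fano3}) to get $\ch_3(\GG)\ge 0$, proves reflexivity of $\GG$ by a tilt-stability argument, and finally computes $\chi(\OO_X,\GG)=d+g>0$ together with a stability vanishing $\Hom(\OO_X,\GG[2])=0$ to produce a nonzero map $\OO_X\to\GG$. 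Your appeal to ``saturates \eqref{eq_BGineq}'' and $\Ext^1(\OO_X,\OO_X)=0$ does not supply any of this.

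\textbf{The elimination of $\mathcal{B}^-[-1]$.} You argue that each $\sigma_{\alpha,\epsilon}$-semistable factor of $B$ has reduced character ``bounded away from the origin'', hence its slope ``stays $>0$ after crossing''. The problem is that $B$ need not remain $\sigma_{\alpha',-\epsilon'}$-semistable, so its new HN factors can have reduced characters very different from that of $B$ itself; in particular nothing you have said bounds $\mu_{\alpha',-\epsilon'}^-(B)$. The paper handles this with \cite[Lemma~3]{LiZhao2}: the slopes of the $\sigma_{-\epsilon',q'}$-HN factors of $B$ are trapped between the slopes of the two intersection points $B^\pm$ of the parabola with the line through $(\epsilon,\delta)$ and $\widetilde v_H(B)$, and an explicit computation shows $s(B^-)>0$, hence $\mu_{\alpha',-\epsilon'}^-(B)>\mu_{\alpha',-\epsilon'}(\OO_X)>0$. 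Without this wall-crossing control your argument has a genuine gap.
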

\begin{proof}
\textbf{Step 1:} We show that if $E \in \Coh^\epsilon(X)$ such that $\mu^+_{\alpha, \epsilon}(E) \leq 0$, then there exists $0 <\epsilon' \leq \epsilon$ such that $E$ is an extension of objects in $\Coh^{-\epsilon'}(X)$ and objects of the form $\GG[1]$, where $\GG$ is a slope semistable coherent sheaf with $\mu_H(\GG)=0$ and $\mu_{\alpha, \epsilon}^+(\GG[1]) = \mu_{\alpha,\epsilon}(\OO_X[1])$.

Consider $E$ as above. Note that $\mu_{\alpha, \epsilon}(\OO_X[1])=\frac{-\epsilon^2+\alpha^2}{2\epsilon} < 0$ for $\alpha < \epsilon$ and converges to $0$ for $\alpha \to \epsilon$. Thus, up to taking $\alpha$ close to $\epsilon$, we can assume that $\mu^+_{\alpha, \epsilon}(E) \leq \mu_{\alpha,\epsilon}(\OO_X[1])$. On the other hand, by definition $E$ is an extension of the form
$$\HH^{-1}(E)[1] \to E \to \HH^0(E),$$
where $\HH^0(E)$ (resp.\ $\HH^{-1}(E)$) is in $\Coh(X)$ and its slope semistable factors have slope $\mu_H >\epsilon$ (resp.\ $\leq \epsilon$). Clearly, $\HH^0(E) \in \Coh^{-\epsilon'}(X)$ for every $0<\epsilon'\leq \epsilon$. Consider $\HH^{-1}(E)$ and denote by $\GG_1, \dots, \GG_k$ its slope semistable factors. Then there exists $0 < \epsilon' \leq \epsilon$ such that if $\mu_H(\GG_i) <0$, then $\mu_H(\GG_i) \leq -\epsilon'$. Then $\GG_i[1] \in \Coh^{-\epsilon'}(X)$.

Assume there is an index $i$ such that $0 \leq \mu_H(\GG_i) \leq \epsilon$. Set $\ch(\GG_i)_{\leq 2}=(d, eH, \frac{f}{2}H^2)$ for integers $d, e, f$. Then $d>0$, $e\geq 0.$ Since  $\mu^+_{\alpha, \epsilon}(E) \leq \mu_{\alpha,\epsilon}(\OO_X[1])$, it follows that
\begin{equation}
\label{eq_slopeGi}  
\mu_{\alpha, \epsilon}(\GG_i[1]) \leq \mu_{\alpha, \epsilon}^+(\GG_i[1]) \leq \mu^+_{\alpha, \epsilon}(E) \leq \mu_{\alpha,\epsilon}(\OO_X[1]).
\end{equation}
However, the point $(s(\GG_i), q(\GG_i))$ corresponding to $\GG_i$ in the affine plane $\mathbb{A}^2_{\R}$ does not belong to $R_{\frac{3}{20}}$ by \cite[Theorem 0.3]{Li_Fano3} (see \eqref{eq_regionR}; equivalently, $f \leq 0$. This implies that
\begin{equation}
\label{eq_somecomputation}
-f\epsilon +\epsilon^2e + \alpha^2e \geq 0, 
\end{equation}
with equality if and only if $e=f=0$. The inequality \eqref{eq_somecomputation} contradicts $\mu_{\alpha, \epsilon}(\GG_i[1]) \leq \mu_{\alpha, \epsilon}(\OO_X[1])$ unless $e=f=0$ when it is an equality. This implies the claim in Step 1.

\textbf{Step 2:} We improve the computation in Step 1, by showing the objects of the form $\GG[1]$ are extensions of copies of $\OO_X[1]$.

Note that $\GG[1]$ is a slope semistable torsion free sheaf. Moreover, $\GG[1]$ is $\sigma_{\alpha, \epsilon}$-semistable with slope $\mu_{\alpha,\epsilon}(\OO_X[1])$, since the inequalities in \eqref{eq_slopeGi} are equalities. Since by \cite[Theorem 0.1]{Li_Fano3}, Conjecture 4.1 of \cite{BMS} holds, it follows that $\ch_3(\GG):=g \geq 0$. 

It is not restrictive to assume $\GG$ is slope stable, up to replacing it with one of its stable factors. We claim that $\GG \cong \OO_X$. Indeed, note that $\chi(\OO_X, \GG)=d+g>0$. Thus $\hom(\OO_X, \GG)+ \hom(\OO_X, \GG[2])>0$. We observe that $\GG$ is a reflexive sheaf. Indeed, consider the short exact sequence
$$0 \to \GG \to \GG^{\vee\vee} \to T \to 0$$
of coherent sheaves, where $T$ is torsion supported in dimension $\leq 1$. If $T \neq 0$, then $T$ would destabilize $\GG[1]$ with respect to $\sigma_{\alpha, \epsilon}$ giving a contradiction. In particular, $\GG[1]$ is $\sigma_{\alpha, 0}$-stable (see \cite[Proposition 4.18]{Soheyla}). Thus there exists $\delta>0$ very small such that $\GG \in \Coh^{-\delta}(X)$ is $\sigma_{\alpha, -\delta}$-semistable for some $0 < \alpha < \delta$. By Serre duality, we have
$$\Hom(\OO_X, \GG[2])=\Hom(\GG, \OO_X(-H)[1])=0,$$
where the last equality follows from the fact that $\GG$ and $\OO_X(-H)[1]$ are $\sigma_{\alpha, -\delta}$-semistable in $\Coh^{-\delta}(X)$ with $\mu_{\alpha, -\delta}(\OO_X(-H)[1])< 0 < \mu_{\alpha, -\delta}(\OO_X)=\mu_{\alpha, -\delta}(\GG)$. Thus there exists a non-zero morphism $\OO_X \to \GG$. Since both are slope stable, we conclude that $\OO_X \cong \GG$.

\textbf{Step 3:} We can now prove the statement of the lemma. 

Consider $F \in \Coh^0_{\alpha, \epsilon}(X)$ for $\epsilon >0$ very small and for every $0 < \alpha < \epsilon$. By definition $F$ is an extension of the form
$$A[1] \to F \to B$$
where $B$ (resp.\ $A$) belongs to $\Coh^\epsilon(X)_{\mu_{\alpha,\epsilon}>0}$ (resp.\ $\Coh^\epsilon(X)_{\mu_{\alpha,\epsilon}\leq 0}$). In the next, we show that
\begin{equation}
\label{eq_B}
B \in \langle \Coh^{-\epsilon'}(X)_{\mu_{\alpha',-\epsilon'}>0}, \Coh^{-\epsilon'}(X)[1] \rangle    
\end{equation}
and
\begin{equation}
\label{eq_A}    
A[1] \in \langle \Coh^{-\epsilon'}(X)[1], \OO_X[2] \rangle,
\end{equation}
for $\epsilon' >0$ very small and $0 < \alpha'< \epsilon'$, which imply the statement.

By Step 2, we have that $A[1]$ is an extension of objects in $\Coh^{-\epsilon'}(X)[1]$ and copies of $\OO_X[2]$. This implies \eqref{eq_A}.

Up to replacing $B$ with its semistable factors, we can assume that $B$ is $\sigma_{\alpha, \epsilon}$-semistable. Consider $0< \epsilon' \leq \epsilon$, $0< \alpha' < \epsilon'$ and set $q':=\frac{(\alpha')^2+(\epsilon')^2}{2}$. By a similar argument as that at the beginning of Step 1, we have that $B \in \langle \Coh^{-\epsilon'}(X), \Coh^{-\epsilon'}(X)[1] \rangle$. We will now use the result in \cite{LiZhao2} to control the slope of the tilt semistable factors of $B$ when we deform the tilt stability condition $\sigma_{\alpha, \epsilon}$ to $\sigma_{\alpha', -\epsilon'}$. To this end, write $\ch(B)_{\leq 2}=(a, bH, \frac{c}{2}H^2)$ for some integers $a, b, c$. We denote by $\ell$ the line connecting the point $(\epsilon, \delta:=\frac{\alpha^2+\epsilon^2}{2})$ to the point $(\frac{b}{a}, \frac{c}{2a})$ corresponding to $B$ in the affine plane. Let $B^+$ and $B^-$ be the intersection points of the parabola $q= \frac{1}{2}s^2$ with $\ell$. By \cite[Lemma 3]{LiZhao2}, if $B_i$ is a $\sigma_{\alpha', -\epsilon'}$-semistable factor of $B$, then its slope satisfies 
$$\mu_{\alpha', -\epsilon'}(B^-) \leq \mu_{\alpha', -\epsilon'}(B_i) \leq \mu_{\alpha', -\epsilon'}(B^+).$$

We claim that the coordinate $s(B^-)$ of $B^-$ on the $s$-axis is $>0$. Indeed, $\ell$ has equation 
$$q= \frac{\frac{c}{2}-a\delta}{b-a\epsilon}(s-\epsilon) +\delta,$$
so the $s$-coordinate of $B^{\pm}$ is given by the solutions of 
\begin{equation} \label{eq_quadratic}
\frac{1}{2}s^2 - \frac{\frac{c}{2}-a\delta}{b-a\epsilon}s+ \frac{\frac{c}{2}-a\delta}{b-a\epsilon}\epsilon -\delta=0.    
\end{equation}
Recall that $\mu_{\alpha, \epsilon}(B)>0$, equivalently $\mu_{\epsilon, \delta}(B)= \frac{\frac{c}{2}-a\delta}{b-a\epsilon} > \epsilon$ (see \eqref{eq_reparametrizedslope} for the relation between reparametrized tilt stability). Then the coefficient of $s$ in \eqref{eq_quadratic} is negative, while $\frac{\frac{c}{2}-a\delta}{b-a\epsilon}\epsilon -\delta> \epsilon^2-\delta>0$, as $\delta < \epsilon^2$ (equivalently $\alpha< \epsilon)$. It follows that $\eqref{eq_quadratic}$ has two positive solutions. 

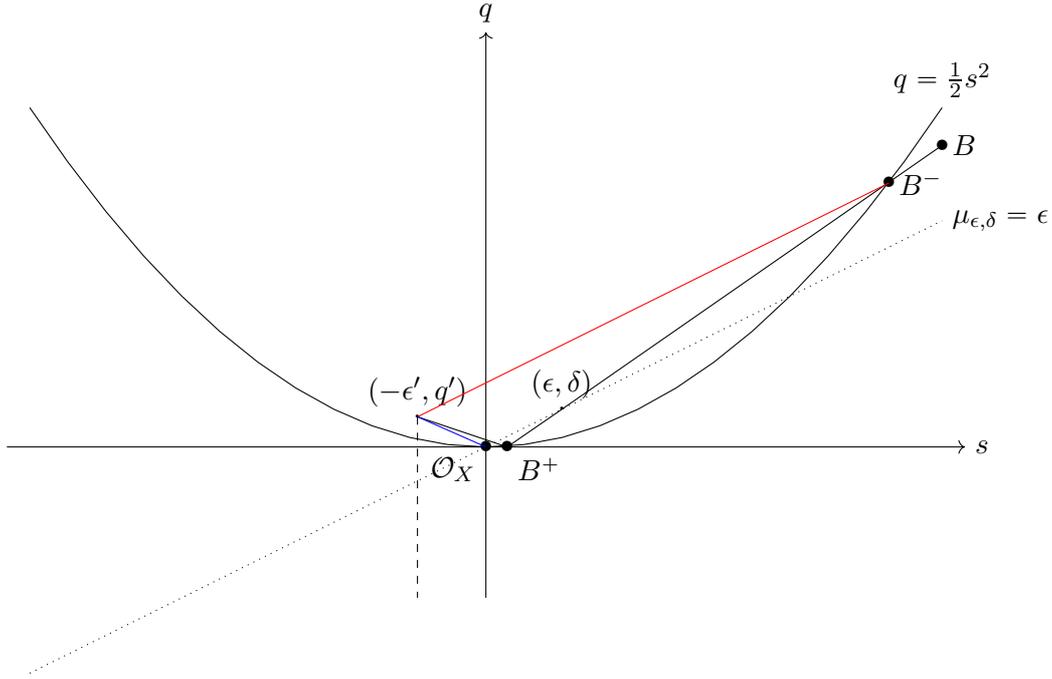
\begin{figure}[htb]
\centering
\begin{tikzpicture}[domain=-6:6]
\draw[->] (-6.3,0) -- (6.3,0) node[right] {$s$};
\draw[->] (0,-2) -- (0, 5.5) node[above] {$q$};
\draw plot (\x,{0.125*\x*\x}) node[above] {$q = \frac{1}{2}s^{2}$};
\coordinate (O) at (0,0);
\node at (O) {$\bullet$};
\draw (O) node [below left]  {$\OO_X$};

\draw[dotted] plot (\x,{0.5*\x}) node[right] {$\mu_{\epsilon, \delta} = \epsilon$};
 
\coordinate (B) at (6,4); 
\node at (B) {$\bullet$};
\draw (B) node [right]  {$B$};

\coordinate (P) at (1,0.5); 
\node at (P) {$\cdot$};
\draw (P) node [above]  {$(\epsilon,\delta)$};

\coordinate (Q) at (-0.9,0.4); 
\node at (Q) {$\cdot$};
\draw (Q) node [above]  {$(-\epsilon',q')$};
\draw[dashed] (Q) -- (-0.9,-2);

\draw (B) -- (P);
\draw (P) -- (0.28,0);

\coordinate (B+) at (0.28,0); 
\node at (B+) {$\bullet$};
\draw (B+) node [below right]  {$B^+$};

\coordinate (B-) at (5.3,3.5); 
\node at (B-) {$\bullet$};
\draw (B-) node [right]  {$B^-$};

\draw[red] (Q) -- (B-);
\draw (Q) -- (B+);
\draw[blue] (Q) -- (0,0);

\end{tikzpicture}

\caption{The tilt-stability $\sigma_{\alpha,\epsilon}$ (resp.\ $\sigma_{\alpha',-\epsilon'}$) corresponds to the point $(\epsilon, \delta:=\frac{\alpha^2+\epsilon^2}{2})$ (resp.\ $(-\epsilon', q':=\frac{(\alpha')^2+(\epsilon')^2}{2})$) in the affine plane. Tilting at $\mu_{\alpha,\epsilon}=0$ (resp.\ $\mu_{\alpha',\epsilon'}=0$) is equivalent to tilting at $\mu_{\epsilon,\delta}= \epsilon$ (resp.\ $\mu_{-\epsilon',q'}= -\epsilon'$). Let $B$ be a $\sigma_{\epsilon, \delta}$-semistable object in $\Coh^\epsilon(X)$ with $\mu_{\epsilon, \delta}(B) > \epsilon$. By \cite[Lemma 3]{LiZhao2} the slope of its $\sigma_{-\epsilon', q'}$-semistable factors is greater or equal than the slope of $B^-$, which is the gradient of the red line connecting $(-\epsilon', q')$ to $B^-$. Since $B^-$ has positive $s$-coordinate, its slope with respect to $\sigma_{-\epsilon', q'}$ is larger than $\mu_{-\epsilon', q'}(\OO_X)$, which is represented in blue.  \label{fig_3}}  
\end{figure}

Now, comparing the slopes using Figure \ref{fig_3}, we deduce that
$$\mu_{-\epsilon', q'}(B^-) > \mu_{-\epsilon', q'}(\OO_X) > -\epsilon',$$
equivalently
$$\mu_{\alpha', -\epsilon'}(B^-) > \mu_{\alpha', -\epsilon'}(\OO_X) > 0.$$
This proves \eqref{eq_B} and ends the proof of the lemma.
\end{proof}

\begin{lemma}
\label{lemma_leftmutationO}
Let $\epsilon>0$ be very small. There exists $0 < \alpha < \epsilon$ such that if $F \in \CCoh^0_{\alpha,\epsilon}(X)$, then $\L_{\OO_X}(F)$ is in $\CCoh^0_{\alpha,\epsilon}(X)$.
\end{lemma}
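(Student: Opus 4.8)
The plan is to analyze the defining triangle of the left mutation,
\[
\mathrm{R}\Hom(\OO_X,F)\otimes\OO_X \xrightarrow{\ \mathrm{ev}\ } F \longrightarrow \L_{\OO_X}(F),
\]
and to deduce membership in $\A:=\Coh^0_{\alpha,\epsilon}(X)$ from the long exact sequence of $\A$-cohomology attached to it. The whole argument is thus a matter of first pinning down the graded vector space $\mathrm{R}\Hom(\OO_X,F)$ and the cohomological position of $\OO_X$ relative to $\A$, and then checking that the cone has $\A$-cohomology concentrated in degree $0$.

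First I would locate the two exceptional objects that control the computation. For $\epsilon>0$ small and $0<\alpha<\epsilon$ both $\OO_X$ and $\OO_X(-H)$ have $\mu_H\le\epsilon$, so $\OO_X[1],\OO_X(-H)[1]\in\Coh^\epsilon(X)$, and the slope estimates already carried out in Lemma \ref{lemma_coh_epsilon} give $\mu_{\alpha,\epsilon}(\OO_X[1])<0$ and $\mu_{\alpha,\epsilon}(\OO_X(-H)[1])<0$. Hence $\OO_X[2],\OO_X(-H)[2]\in\A$, i.e.\ $\OO_X\in\A[-2]$: a copy of $\OO_X$ placed in cohomological degree $i$ contributes to $\A$-cohomology in degree $i+2$.

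Next I would bound the graded pieces. Since $\OO_X[2],F\in\A$ the t-structure axiom gives $\Ext^i(\OO_X,F)=\Hom(\OO_X[2],F[i+2])=0$ for $i\le-3$, while Serre duality together with $\OO_X(-H)[2],F\in\A$ gives
\[
\Ext^i(\OO_X,F)\cong\Ext^{3-i}(F,\OO_X(-H))^*=\Hom\big(F,(\OO_X(-H)[2])[1-i]\big)^*=0\quad\text{for }i\ge2 .
\]
Feeding the remaining range $i\in\{-2,-1,0,1\}$ into the triangle, one finds that $\L_{\OO_X}(F)$ has $\A$-cohomology only in degrees $-1,0,1,2$, with $\mathcal{H}^{1}_{\A}\cong\Ext^0(\OO_X,F)\otimes\OO_X[2]$, $\mathcal{H}^{2}_{\A}\cong\Ext^1(\OO_X,F)\otimes\OO_X[2]$, and $\mathcal{H}^{-1}_{\A}$ equal to the kernel of the induced map $\Ext^{-2}(\OO_X,F)\otimes\OO_X[2]\to F$.

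The crux, and the step I expect to be hardest, is therefore to show simultaneously that $\Ext^0(\OO_X,F)=\Ext^1(\OO_X,F)=0$ and that this evaluation is injective in $\A$. To obtain the two vanishings I would re-express $F$ via Lemma \ref{lemma_coh_epsilon} as an iterated extension of objects of $\Coh^0_{\alpha',-\epsilon'}(X)$, their shifts by $1$, and copies of $\OO_X[2]$; since $\L_{\OO_X}$ is exact and $\L_{\OO_X}(\OO_X[2])=0$, this reduces the claim to objects $G\in\Coh^0_{\alpha',-\epsilon'}(X)$. The gain is that for the parameter $-\epsilon'<0$ the slope of $\OO_X$ becomes positive, so $\OO_X\in\Coh^0_{\alpha',-\epsilon'}(X)$ now sits in cohomological degree $0$; the heart axiom then forces $\Ext^{<0}(\OO_X,G)=0$ and Serre duality forces $\Ext^{\ge2}(\OO_X,G)=0$, so that $\mathrm{R}\Hom(\OO_X,G)$ is concentrated in degrees $0,1$ and $\L_{\OO_X}(G)\in\langle\Coh^0_{\alpha',-\epsilon'}(X),\Coh^0_{\alpha',-\epsilon'}(X)[1]\rangle$. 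Recombining these pieces, and using that $\L_{\OO_X}(F)\in\OO_X^{\perp}$ removes exactly the spurious $\OO_X[2]$-contributions, should land $\L_{\OO_X}(F)$ back in $\A$. The genuinely delicate point is this degree bookkeeping across the two hearts $\Coh^0_{\alpha,\epsilon}(X)$ and $\Coh^0_{\alpha',-\epsilon'}(X)$: one must use the stronger Bogomolov--Gieseker inequality valid over Li's boundary (the region $R_{\frac{3}{20}}$ of \eqref{eq_regionR}) to rule out, when deforming the tilt from $\epsilon$ to $-\epsilon'$, any destabilization of $\OO_X$ or stray semistable factor that would push cohomology into an unwanted degree.
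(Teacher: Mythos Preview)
Your approach misses the single observation that makes the paper's proof a two-liner, and in its place substitutes a detour that does not close the gap.

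The paper's argument runs as follows. Since $\OO_X[2],\OO_X(-H)[2]\in\A:=\Coh^0_{\alpha,\epsilon}(X)$, the argument of \cite[Lemma 5.9]{BLMS} gives directly a five-term exact sequence
\[
0\to\HH^{-1}_{\A}(\L_{\OO_X}(F))\to\OO_X[2]^{\oplus k_0}\xrightarrow{\ \mathrm{ev}\ }F\to\HH^0_{\A}(\L_{\OO_X}(F))\to\OO_X[2]^{\oplus k_1}\to 0 .
\]
(In other words, the vanishing of the $\HH^1,\HH^2$ you worry about is already contained in the BLMS argument: those cohomologies are sums of copies of $\OO_X[2]$, and orthogonality $\L_{\OO_X}(F)\in\OO_X^\perp$ kills them. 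You gesture at this at the end, but you never actually need the vanishing $\Ext^0(\OO_X,F)=\Ext^1(\OO_X,F)=0$, which is in general \emph{false}.) The only remaining task is $\HH^{-1}_{\A}=0$, and here the paper observes that
\[
\mu^0_{\alpha,\epsilon}(\OO_X[2])=\frac{2\epsilon}{\epsilon^2-\alpha^2}\longrightarrow+\infty\qquad\text{as }\alpha\to\epsilon .
\]
So one may choose $\alpha$ so that $\OO_X[2]$ has the largest slope among the $\sigma^0_{\alpha,\epsilon}$-semistable factors of $F$; then the evaluation $\OO_X[2]^{\oplus k_0}\to F$ is injective and $\HH^{-1}_{\A}=0$. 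That is the entire proof.

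Your proposal never uses this slope-to-infinity observation, and without it you have no argument for the injectivity of the evaluation. Your substitute---passing via Lemma \ref{lemma_coh_epsilon} to the heart $\Coh^0_{\alpha',-\epsilon'}(X)$---does not help: first, that lemma requires $F$ to lie in $\Coh^0_{\alpha,\epsilon}(X)$ for \emph{every} $0<\alpha<\epsilon$, which is not your hypothesis; second, the computation you sketch there shows only that $\mathrm{R}\Hom(\OO_X,G)$ is concentrated in degrees $0,1$ for $G$ in the \emph{other} heart, which is precisely the range you want to \emph{vanish}, not survive. The closing appeal to Li's Bogomolov inequality is not the relevant tool either; the point is purely the freedom to move $\alpha$ close to $\epsilon$.
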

\begin{proof}
As in \cite[Lemma 5.9]{BLMS} we have the five terms exact sequence 
$$0 \to \HH^{-1}(\L_{\OO_X}(F)) \to \OO_X[2]^{\oplus k_0} \to F \to \HH^0(\L_{\OO_X}(F)) \to \OO_X[2]^{\oplus k_1} \to 0,$$
where $\HH^i(\L_{\OO_X}(F))$ denotes the cohomology in $\Coh^0_{\alpha,\epsilon}(X)$ and $k_0, k_1$ are integers. Note that
$$\mu^0_{\alpha,\epsilon}(\OO_X[2])=\frac{2 \epsilon}{\epsilon^2-\alpha^2} \to +\infty \text{ if } \alpha \to \epsilon.$$
Thus up to replacing $\alpha$, we can assume that $\OO_X[2]$ is the stable factor of $F$ with larger slope. It follows that $\HH^{-1}(\L_{\OO_X}(F))=0$, thus $\L_{\OO_X}(F) \in \Coh^0_{\alpha,\epsilon}(X)$. 
\end{proof}

\begin{lemma}
\label{lemma_leftmutationheart}
Let $\epsilon>0$ be very small. Then there exist $0 < \alpha < \epsilon$, $0 < \epsilon' \leq \epsilon$, $0< \alpha'< \epsilon'$ such that
$$\L_{\OO_X}(\A(\alpha,\epsilon)) \subset \langle \AA(\alpha',-\epsilon'), \A(\alpha',-\epsilon')[1] \rangle.$$
\end{lemma}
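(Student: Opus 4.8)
The plan is to check the inclusion objectwise. I take an arbitrary $E \in \A(\alpha,\epsilon)$ (the restricted heart on $\Ku(X)_3$), apply the left mutation $\L_{\OO_X}$, and show that $\L_{\OO_X}(E)$ lies in $\langle \A(\alpha',-\epsilon'),\A(\alpha',-\epsilon')[1]\rangle$ on $\Ku(X)_2$, by feeding $\L_{\OO_X}(E)$ successively through Lemma \ref{lemma_leftmutationO} and Lemma \ref{lemma_coh_epsilon}. The only obstruction produced by Lemma \ref{lemma_coh_epsilon} is a summand $\OO_X[2]$ in degree $-2$, and the conceptual heart of the argument is that this summand is forced to vanish once we restrict to $\Ku(X)_2$, because $\OO_X$ is orthogonal to $\Ku(X)_2$.

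First I would record that $\L_{\OO_X}$ induces an equivalence $\Ku(X)_3 \to \Ku(X)_2$ (recall $\Ku(X)_2 = \L_{\OO_X}(\Ku(X)_3)$), so that $\L_{\OO_X}(E) \in \Ku(X)_2$. Next, since the point $(\epsilon,\tfrac{\alpha^2+\epsilon^2}{2})$ lies in the region of Proposition \ref{prop_stabcondKu}(3) for $\epsilon$ small and $0<\alpha<\epsilon$, and since by Lemma \ref{lemma_heartchangingsq} (applied with $s=s'=\epsilon$) the restricted heart on $\Ku(X)_3$ is independent of $\alpha$, I obtain $E \in \Coh^0_{\alpha,\epsilon}(X)$ for every $0<\alpha<\epsilon$. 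Applying Lemma \ref{lemma_leftmutationO} for each $\alpha$ close to $\epsilon$ then gives $\L_{\OO_X}(E) \in \Coh^0_{\alpha,\epsilon}(X)$ for all such $\alpha$, which is exactly the hypothesis required to run Lemma \ref{lemma_coh_epsilon} on $F=\L_{\OO_X}(E)$. This produces $\epsilon'$, $\alpha'$ with
$$\L_{\OO_X}(E) \in \langle \Coh^0_{\alpha',-\epsilon'}(X),\, \Coh^0_{\alpha',-\epsilon'}(X)[1],\, \OO_X[2]\rangle.$$

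The last step removes the $\OO_X[2]$ term. One checks that $(-\epsilon',\tfrac{(\alpha')^2+(\epsilon')^2}{2})$ lies in the region of Proposition \ref{prop_stabcondKu}(2), so $\Coh^0_{\alpha',-\epsilon'}(X)$ restricts to $\A(\alpha',-\epsilon')$ on $\Ku(X)_2$ and, by \cite[Lemma 4.3]{BLMS}, the $\Coh^0_{\alpha',-\epsilon'}(X)$-cohomology of $\L_{\OO_X}(E)\in\Ku(X)_2$ again lies in $\Ku(X)_2$. Since $\OO_X \in \Coh^0_{\alpha',-\epsilon'}(X)$, the displayed containment shows this cohomology is concentrated in degrees $-2,-1,0$ with $\HH^{-2}$ an iterated extension of copies of $\OO_X$; as such an object receives a nonzero map from $\OO_X$, while $\Hom(\OO_X,\Ku(X)_2)=0$, we conclude $\HH^{-2}=0$. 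Hence $\L_{\OO_X}(E)\in\langle\Coh^0_{\alpha',-\epsilon'}(X),\Coh^0_{\alpha',-\epsilon'}(X)[1]\rangle$, and intersecting with $\Ku(X)_2$ via \cite[Lemma 4.3]{BLMS} yields the desired inclusion.

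I expect the main obstacle to be twofold. First, I need to confirm that $\HH^{-2}$ is genuinely built out of $\OO_X$ (so that the orthogonality $\Hom(\OO_X,\Ku(X)_2)=0$ can be applied), which I would read off from the proof of Lemma \ref{lemma_coh_epsilon}, where the negative part is exhibited as an extension of objects of $\Coh^{-\epsilon'}(X)[1]$ and copies of $\OO_X[2]$. Second, and more delicate, Lemma \ref{lemma_coh_epsilon} a priori yields $\epsilon'$, $\alpha'$ depending on the individual object $E$, whereas the statement requires a single pair $(\epsilon',\alpha')$ valid for the whole heart $\A(\alpha,\epsilon)$; I would promote the objectwise statement to a uniform one using the $\alpha$-independence of the restricted hearts (Lemma \ref{lemma_heartchangingsq}) together with a uniform choice of small $\epsilon'$.
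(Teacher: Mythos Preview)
Your proposal is correct and follows essentially the same approach as the paper's proof: both feed $\L_{\OO_X}(F)$ through Lemma~\ref{lemma_leftmutationO} and Lemma~\ref{lemma_coh_epsilon}, then use \cite[Lemma 4.3]{BLMS} together with membership in $\Ku(X)_2$ to descend from the ambient double-tilted heart to $\A(\alpha',-\epsilon')$. If anything, your treatment of why the $\OO_X[2]$ contribution vanishes is more explicit than the paper's, which simply asserts the conclusion after invoking \cite[Lemma 4.3]{BLMS}; the two obstacles you flag (the structure of $\HH^{-2}$ and the uniformity of $\epsilon'$) are handled in the paper at the same level of sketchiness you anticipate, namely by the sentence ``Up to taking a smaller $\epsilon'$, the above relation holds for every $F \in \AA(\alpha,\epsilon)$'' and the final appeal to Lemma~\ref{lemma_heartchangingsq}.
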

\begin{proof}
Consider $F \in \A(\alpha,\epsilon)$ and its left mutation $\L_{\OO_X}(F)$. By Lemma \ref{lemma_leftmutationO} we can find $\alpha$ such that $\L_{\OO_X}(F) \in \Coh^0_{\alpha,\epsilon}(X)$. Note that $F \in \AA(\alpha'', \epsilon)$ for every $0 < \alpha''< \epsilon$ by Lemma \ref{lemma_heartchangingsq}. By Lemma \ref{lemma_coh_epsilon}, there exist $\epsilon'>0$ very small and $0 < \alpha' < \epsilon'$ such that
$$\L_{\OO_X}(F) \in \langle \Coh^0_{\alpha',-\epsilon'}(X), \Coh^0_{\alpha',-\epsilon'}(X)[1], \OO_X[2] \rangle.$$
Up to taking a smaller $\epsilon'$, the above relation holds for every $F \in \AA(\alpha,\epsilon)$. Note that $\A(\alpha',-\epsilon')=\Coh^0_{\alpha',-\epsilon'}(X) \cap \Ku(X)_2$ is a heart of a stability condition on $\Ku(X)_2$ by Proposition \ref{prop_stabcondKu}(2). Since $\L_{\OO_X}(F)$ is in $\Ku(X)_2$ and by \cite[Lemma 4.3]{BLMS}, its cohomology in $\Coh^0_{\alpha',-\epsilon'}(X)$ belongs to $\Ku(X)_2$ as well, we deduce that $\L_{\OO_X}(F) \in  \langle \AA(\alpha',-\epsilon'), \A(\alpha',-\epsilon')[1] \rangle$. Finally, we observe that the statement does not depend on $\alpha' < \epsilon'$, by Lemma \ref{lemma_heartchangingsq}.
\end{proof}

\begin{prop}
\label{prop_leftmutationstab}
Let $\epsilon>0$ be very small. Then there exist $0 < \alpha < \epsilon$, $0 < \epsilon' \leq \epsilon$, $0< \alpha'< \epsilon'$ such that there exists $\widetilde{g} \in \widetilde{\emph{GL}}^+_2(\R)$ satisfying
$$\L_{\OO_X} \cdot \sigma(\alpha,\epsilon)= \sigma(\alpha',-\epsilon') \cdot \widetilde{g}.$$
\end{prop}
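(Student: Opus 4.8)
The plan is to follow the pattern established in the proofs of Lemma~\ref{lem_notdependonmu} and Proposition~\ref{prop_sameorbit}: once the two hearts in question are known to be tilts of a common heart, it suffices to check that the associated central charges agree up to an element of $\text{GL}^+_2(\R)$, and then to invoke \cite[Lemma 8.11]{BMS}. The heart comparison has already been carried out in Lemma~\ref{lemma_leftmutationheart}, so the new content is essentially the central charge computation.

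First I would record the effect of the equivalence $\L_{\OO_X}\colon \Ku(X)_3 \xrightarrow{\sim} \Ku(X)_2$ on $\sigma(\alpha,\epsilon)$. By the definition of the $\text{Aut}$-action, $\L_{\OO_X}\cdot\sigma(\alpha,\epsilon)=(\L_{\OO_X}(\A(\alpha,\epsilon)),\, Z(\alpha,\epsilon)\circ(\L_{\OO_X})_*^{-1})$. Lemma~\ref{lemma_leftmutationheart} gives $\L_{\OO_X}(\A(\alpha,\epsilon))\subset\langle\A(\alpha',-\epsilon'),\A(\alpha',-\epsilon')[1]\rangle$, so the heart of $\L_{\OO_X}\cdot\sigma(\alpha,\epsilon)$ is a tilt of the heart $\A(\alpha',-\epsilon')$ of $\sigma(\alpha',-\epsilon')$.

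Second I would compare central charges on the explicit bases. Since $(\L_{\OO_X})_*(d_i)=c_i$ by \eqref{eq_d12} and \eqref{eq_c12}, the central charge of $\L_{\OO_X}\cdot\sigma(\alpha,\epsilon)$ takes the value $Z(\alpha,\epsilon)(d_i)$ on $c_i$ for $i=1,2$. As in Proposition~\ref{prop_sameorbit}, I reduce to comparing the orientation of the reparametrized charge $Z_{s,q}$ with $s=\epsilon$, $q=\tfrac{\alpha^2+\epsilon^2}{2}$ on the basis $d_1,d_2$ against that of $Z_{s',q'}$ with $s'=-\epsilon'$, $q'=\tfrac{(\alpha')^2+(\epsilon')^2}{2}$ on $c_1,c_2$. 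A direct computation shows the two $2\times 2$ real determinants attached to the frames $(Z_{s,q}(d_1),Z_{s,q}(d_2))$ and $(Z_{s',q'}(c_1),Z_{s',q'}(c_2))$ equal $100q-40s+20$ and $20+40s'+100q'$ respectively, both positive once $\epsilon,\epsilon'$ are small; since each factor $1/u$ acts on $\C$ as an orientation-preserving similarity, the same sign persists for the actual central charges. Hence the two frames have the same orientation and there is $M\in\text{GL}^+_2(\R)$ with $Z(\alpha,\epsilon)\circ(\L_{\OO_X})_*^{-1}=M^{-1}\circ Z(\alpha',-\epsilon')$.

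Finally I would lift $M$ to $\widetilde{g}=(g,M)\in\widetilde{\text{GL}}^+_2(\R)$, choosing the increasing function $g$ so that $g(0,1]\subset(0,2]$; this guarantees that $\sigma(\alpha',-\epsilon')\cdot\widetilde{g}$ has heart contained in $\langle\A(\alpha',-\epsilon'),\A(\alpha',-\epsilon')[1]\rangle$, matching the tilt from the first step. Then $\L_{\OO_X}\cdot\sigma(\alpha,\epsilon)$ and $\sigma(\alpha',-\epsilon')\cdot\widetilde{g}$ have the same central charge and both hearts are tilts of $\A(\alpha',-\epsilon')$, so \cite[Lemma 8.11]{BMS} forces them to coincide. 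I expect the main obstacle to be the orientation computation in the second step, together with the bookkeeping of the two distinct vectors $u$ (one attached to $\Ku(X)_3$, the other to $\Ku(X)_2$); the saving grace is precisely that each factor $1/u$ is an orientation-preserving similarity of $\C$, so it does not affect the sign of the relevant determinant.
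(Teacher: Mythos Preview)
Your proposal is correct and follows essentially the same approach as the paper's proof: both reduce to the heart comparison from Lemma~\ref{lemma_leftmutationheart}, verify that the two central charges differ by an element of $\text{GL}^+_2(\R)$ via an orientation check on the bases $(d_1,d_2)$ and $(c_1,c_2)$, lift to $\widetilde{\text{GL}}^+_2(\R)$, and conclude with \cite[Lemma 8.11]{BMS}. The only cosmetic difference is that the paper computes the determinants directly for $Z(\alpha,\epsilon)=-\sqrt{-1}\,Z_{\alpha,\epsilon}$ (obtaining $\tfrac{\alpha^2}{2}+\tfrac{\epsilon^2}{2}-\tfrac{2}{5}\epsilon+\tfrac{1}{5}$, etc.), whereas you compute with $Z_{s,q}$ and then note that $1/u$ preserves orientation; your determinants $100q-40s+20$ and $20+40s'+100q'$ agree with the paper's up to the positive factor $100$.
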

\begin{proof}
Recall that the stability condition $\L_{\OO_X} \cdot \sigma(\alpha,\epsilon)$ has heart $\L_{\OO_X}(\AA(\alpha,\epsilon))$ and stability function $Z':=Z(\alpha,\epsilon) \circ (\L_{\OO_X})_*^{-1}$. As done for instance in Proposition \ref{prop_sameorbit}, we can check that there exists $\widetilde{g} \in \widetilde{\text{GL}}^+_2(\R)$ such that $\sigma(\alpha',-\epsilon') \cdot \widetilde{g} =\sigma'$, where $\sigma'=(\AA',Z')$ and $\AA'$ is a tilt of $\AA(\alpha',-\epsilon')$, up to shifting. More precisely, one first needs to check there exists $M \in \text{GL}_2^+(\R)$ such that $Z'=M^{-1} \cdot Z(\alpha', -\epsilon')$, or equivalently, 
\begin{equation} \label{eq_M}
Z(\alpha,\epsilon)=M^{-1} \cdot Z(\alpha', -\epsilon') \cdot (\L_{\OO_X})_*.    
\end{equation}
In order to do this, recall the basis $d_1:=b_1(H)$, $d_2:=b_2(H)$ 
of $\NN(\Ku(X)_3)$ given in \eqref{eq_d12} and the basis $c_1:=(\L_{\OO_X})_*(d_1)$, $c_2:=(\L_{\OO_X})_*(d_2)$ of $\NN(\Ku(X)_2)$ defined in \eqref{eq_c12}. Recall also that $Z(\alpha, \epsilon)=-\sqrt{-1}Z_{\alpha, \epsilon}|_{\NN(\Ku(X)_3)}$, $Z(\alpha', -\epsilon')=-\sqrt{-1}Z_{\alpha', -\epsilon'}|_{\NN(\Ku(X)_2)}$ as defined in \eqref{eq_defodZsq}. Then
\begin{align*}
Z(\alpha, \epsilon)(d_1)&=(1-\epsilon) + \sqrt{-1}(\frac{1}{5}-\epsilon +\frac{\epsilon^2}{2}- \frac{\alpha^2}{2}),\\
Z(\alpha, \epsilon)(d_2)&=1 + \sqrt{-1}(\frac{2}{5}-\epsilon),\\
Z(\alpha',-\epsilon')(c_1)&=(1-3\epsilon')+\sqrt{-1}(\frac{1}{5}+\epsilon'-\frac{3}{2}(\epsilon')^2+\frac{3}{2}(\alpha')^2),\\
Z(\alpha',-\epsilon')(c_2)&=(1-4\epsilon') +\sqrt{-1} (\frac{2}{5}+\epsilon' -2(\epsilon')^2+2(\alpha')^2).
\end{align*}
The two matrices having on the columns the components with respect to the standard basis of $\C=\R^2$ of $Z(\alpha, \epsilon)(d_1)$, $Z(\alpha, \epsilon)(d_2)$, and $Z(\alpha', -\epsilon')(c_1)$, $Z(\alpha', -\epsilon')(c_2)$ respectively, have determinant $$\frac{\alpha^2}{2}+\frac{\epsilon^2}{2}-\frac{2}{5}\epsilon+\frac{1}{5}>0, \, \frac{(\alpha')^2}{2}+\frac{(\epsilon')^2}{2}-\frac{2}{5}\epsilon'+\frac{1}{5}>0.$$ Thus there exists $M \in \text{GL}^+_2(\R)$ satisfying \eqref{eq_M}. More explicitly, setting $M^{-1}=
\begin{pmatrix} 
x_1 & x_2\\
x_3 & x_4
\end{pmatrix}$, condition \eqref{eq_M} translates into
$$
\begin{cases}
Z(\alpha,\epsilon)(d_1)=M^{-1} \cdot Z(\alpha', -\epsilon') \cdot (\L_{\OO_X})_*(d_1),\\
Z(\alpha,\epsilon)(d_2)=M^{-1} \cdot Z(\alpha', -\epsilon') \cdot (\L_{\OO_X})_*(d_2)
\end{cases}
$$
which is equivalent to solve the linear system 
$$
\begin{cases}
(1-3\epsilon')x_1 + (\frac{1}{5}+\epsilon'-\frac{3}{2}(\epsilon')^2+\frac{3}{2}(\alpha')^2)x_2= 1-\epsilon \\
(1-3\epsilon')x_3 + (\frac{1}{5}+\epsilon'-\frac{3}{2}(\epsilon')^2+\frac{3}{2}(\alpha')^2)x_4= \frac{1}{5}-\epsilon +\frac{\epsilon^2}{2}-\frac{\alpha^2}{2}\\
(1-4\epsilon')x_1 + (\frac{2}{5}+\epsilon' -2(\epsilon')^2+2(\alpha')^2)x_2=1 \\
(1-4\epsilon')x_3 + (\frac{2}{5}+\epsilon' -2(\epsilon')^2+2(\alpha')^2)x_4= \frac{2}{5}-\epsilon. 
\end{cases}  
$$ 
Using a computer, we can find the solution of the above linear system and check the existence of a cover $\widetilde{g}=(g, M) \in \widetilde{\text{GL}}^+_2(\R)$ with the desired properties.
 
Since by Lemma \ref{lemma_leftmutationheart}, the heart $\L_{\OO_X}(\AA(\alpha,\epsilon))$ is a tilt of $\AA(\alpha',-\epsilon')$, by \cite[Lemma 8.11]{BMS} we conclude $\sigma'=\L_{\OO_X} \cdot \sigma(\alpha,\epsilon)$, as we wanted.
\end{proof}

\begin{cor}
\label{cor_leftmutationstab}
For $i=2,3$, if $\sigma(s_i,q_i)$ is a stability condition on $\Ku(X)_i$, then there exists $\widetilde{g} \in \widetilde{\emph{GL}}^+_2(\R)$ such that
$$\L_{\OO_X} \cdot \sigma(s_3,q_3)= \sigma(s_2,q_2) \cdot \widetilde{g}.$$
\end{cor}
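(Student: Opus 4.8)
The plan is to deduce the corollary formally from the two results already established in this section: Proposition \ref{prop_leftmutationstab}, which records the single identity $\L_{\OO_X} \cdot \sigma(\alpha,\epsilon) = \sigma(\alpha',-\epsilon') \cdot \widetilde{g}_0$ relating one stability condition on $\Ku(X)_3$ to one on $\Ku(X)_2$, and Proposition \ref{prop_sameorbit}, which collapses all the induced stability conditions on each $\Ku(X)_i$ into a single $\widetilde{\mathrm{GL}}^+_2(\R)$-orbit. The structural fact underlying the argument is that the left action of an autoequivalence and the right action of $\widetilde{\mathrm{GL}}^+_2(\R)$ on the stability manifold commute: from the definitions $\Phi \cdot \sigma = (\Phi(\mathcal{P}), Z \circ \Phi_*^{-1})$ and $\sigma \cdot \widetilde{g} = (\mathcal{P}(g(-)), M^{-1} \circ Z)$ for $\widetilde{g} = (g,M)$, one checks directly that both $(\Phi \cdot \sigma) \cdot \widetilde{g}$ and $\Phi \cdot (\sigma \cdot \widetilde{g})$ carry the slicing $\phi \mapsto \Phi(\mathcal{P}(g(\phi)))$ and the central charge $M^{-1} \circ Z \circ \Phi_*^{-1}$. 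I will apply this with $\Phi = \L_{\OO_X}$, viewed as the equivalence $\Ku(X)_3 \to \Ku(X)_2$ coming from the semiorthogonal decompositions \eqref{eq_Ku2}, \eqref{eq_Ku3}.

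First I would fix the distinguished pair supplied by Proposition \ref{prop_leftmutationstab}, namely $\sigma(\alpha,\epsilon)$ on $\Ku(X)_3$ and $\sigma(\alpha',-\epsilon')$ on $\Ku(X)_2$, which correspond to the points $(s_3,q_3) = (\epsilon, \tfrac{\alpha^2+\epsilon^2}{2})$ and $(s_2,q_2) = (-\epsilon', \tfrac{(\alpha')^2+(\epsilon')^2}{2})$ lying in the regions of Proposition \ref{prop_stabcondKu}(3) and (2) respectively. Given an arbitrary $\sigma(s_3,q_3)$ on $\Ku(X)_3$, Proposition \ref{prop_sameorbit} for $i=3$ produces $\widetilde{g}_1 \in \widetilde{\mathrm{GL}}^+_2(\R)$ with $\sigma(s_3,q_3) = \sigma(\alpha,\epsilon) \cdot \widetilde{g}_1$. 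Applying $\L_{\OO_X}$ and using the commutativity above gives
$$\L_{\OO_X} \cdot \sigma(s_3,q_3) = \bigl(\L_{\OO_X} \cdot \sigma(\alpha,\epsilon)\bigr) \cdot \widetilde{g}_1 = \sigma(\alpha',-\epsilon') \cdot \widetilde{g}_0 \widetilde{g}_1.$$

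Finally, for any prescribed $\sigma(s_2,q_2)$ on $\Ku(X)_2$, Proposition \ref{prop_sameorbit} for $i=2$ yields $\widetilde{g}_2$ with $\sigma(\alpha',-\epsilon') = \sigma(s_2,q_2) \cdot \widetilde{g}_2$; substituting and using associativity of the right action, the element $\widetilde{g} := \widetilde{g}_2 \widetilde{g}_0 \widetilde{g}_1 \in \widetilde{\mathrm{GL}}^+_2(\R)$ satisfies $\L_{\OO_X} \cdot \sigma(s_3,q_3) = \sigma(s_2,q_2) \cdot \widetilde{g}$, as desired.

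The argument is essentially formal once the two propositions are in hand, so I do not expect a serious obstacle; the only points requiring care are the verification that the autoequivalence and $\widetilde{\mathrm{GL}}^+_2(\R)$ actions commute, and the bookkeeping ensuring that $\L_{\OO_X}$ carries the entire orbit on $\Ku(X)_3$ into the orbit on $\Ku(X)_2$ rather than some unrelated family. I would also double-check that the distinguished $\sigma(\alpha,\epsilon)$ and $\sigma(\alpha',-\epsilon')$ genuinely lie in the parameter regions covered by Proposition \ref{prop_stabcondKu}, so that Proposition \ref{prop_sameorbit} applies to them.
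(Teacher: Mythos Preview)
Your proof is correct and follows essentially the same approach as the paper's own proof: both use Proposition~\ref{prop_sameorbit} to place $\sigma(s_3,q_3)$ and $\sigma(s_2,q_2)$ in the orbits of $\sigma(\alpha,\epsilon)$ and $\sigma(\alpha',-\epsilon')$ respectively, invoke Proposition~\ref{prop_leftmutationstab} for the link between these, and use the commutativity of the $\L_{\OO_X}$- and $\widetilde{\mathrm{GL}}^+_2(\R)$-actions. Your version simply spells out the group-element bookkeeping and the commutativity check more explicitly than the paper does.
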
 
\begin{proof}
By Proposition \ref{prop_sameorbit}, we have that $\sigma(s_3,q_3)$ (resp.\ $\sigma(s_2,q_2)$) is in the same orbit of $\sigma(\alpha, \epsilon)$ (resp.\ $\sigma(\alpha', -\epsilon')$) with respect to the $\widetilde{\text{GL}}^+_2(\R)$-action. Since the action of $\L_{\OO_X}$ commutes with the $\widetilde{\text{GL}}^+_2(\R)$-action, by Proposition \ref{prop_leftmutationstab}, we deduce the claim.
\end{proof}

\subsection{Stability conditions on $\Ku(X)_2$ and action of $\L_{\UU_X}$} \label{sec_stabonKu2}

Our goal is now to investigate the action of the left mutation $\L_{\UU_X}$ on a stability condition $\sigma(s_2,q_2)$ on $\Ku(X)_2$ as in Proposition \ref{prop_stabcondKu}(2). The main statement is Corollary \ref{cor_leftmutationstabafterU}.

We would like to apply the same technique as in the previous section. In particular, we need to consider a stability condition corresponding to $(s_2,q_2)$ very close to the point $(-\frac{1}{2}, \frac{1}{20}) \in \partial R_{\frac{3}{20}}$. Thus we have to work with the stability conditions induced from (a tilt of) the tilt stability conditions $\sigma_{s,q}$ below the parabola $q-\frac{1}{2}s^2=0$. 

We start by fixing $s=-\frac{1}{2}+\epsilon$ for $\epsilon>0$ very small such that $s < -\sqrt{\frac{3}{20}}$. For simplicity, we can assume $\epsilon< \frac{1}{10}$. Consider $q>0$ such that the point $(s,q)$ satisfies the conditions in Proposition \ref{prop_stabcondKu}(2). Explicitly, we have that
\begin{equation}
\label{eq_q}
q \in (\frac{1}{20}+\frac{1}{2}\epsilon^2-\frac{1}{2}\epsilon, \frac{1}{20}-\frac{1}{10}\epsilon).  
\end{equation}
On the other side of the vertical wall for $\UU_X$, we consider $s'=-\frac{1}{2}-\epsilon'$ for $\epsilon'>0$ very small. Note that if $q'$ varies in
\begin{equation} \label{eq_q'}
(\frac{1}{20}+\frac{1}{2}(\epsilon')^2+\frac{1}{2}\epsilon', \frac{1}{20}+\frac{9}{10}\epsilon'),   
\end{equation}
then $(s',q')$ satisfies the conditions in Proposition \ref{prop_stabcondKu}(1), so this point induces a stability condition on $\Ku(X)_1$ after suitable tilting.

We fix $\bar{\mu}=-\frac{1}{10}$. Note that
$$\mu_{s,q}(\OO_X)=\frac{-q}{\frac{1}{2}-\epsilon}>-\frac{1}{10} \Longleftrightarrow q < \frac{1}{20}-\frac{1}{10}\epsilon$$
and
$$\mu_{s,q}(\UU_X[1])=\frac{\frac{1}{10}-2q}{-2\epsilon}< -\frac{1}{10} \Longleftrightarrow q < \frac{1}{20}- \frac{1}{10}\epsilon$$
which holds by \eqref{eq_q}.

On the other side, fix $\bar{\mu}'=-\frac{9}{10}$. Similarly, we have
$$\mu_{s',q'}(\OO_X(-H)[1]) < \bar{\mu}' < \mu_{s',q'}(\UU_X).$$

\begin{lemma}
\label{lemma_coh2tilt}
Let $\epsilon >0$ be very small and $\bar{\mu}=-\frac{1}{10}$. Assume that $F \in \emph{Coh}^{\bar{\mu}}_{-\frac{1}{2}+\epsilon, q}(X)$ for every $q$ satisfying \eqref{eq_q}. For $\bar{\mu}'=-\frac{9}{10}$, there exists $\epsilon' >0$, $q'$ satisfying \eqref{eq_q'} such that 
$$F \in \langle \emph{Coh}^{\bar{\mu}'}_{-\frac{1}{2} -\epsilon',q'}(X), \emph{Coh}^{\bar{\mu}'}_{-\frac{1}{2} -\epsilon',q'}(X)[1], \UU_X[2] \rangle.$$
\end{lemma}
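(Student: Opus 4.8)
The plan is to mirror, step by step, the proof of Lemma \ref{lemma_coh_epsilon}, replacing the wall-object $\OO_X$ by the exceptional bundle $\UU_X$. The wall is now $s=-\frac12=\mu_H(\UU_X)$ instead of $s=0$, and the tilting slopes are $\bar\mu=-\frac{1}{10}$, $\bar\mu'=-\frac{9}{10}$ instead of $0$; the key geometric input is that $\UU_X$ lies on the curved part $s^2-2q=\frac{3}{20}$ of $\partial R_{\frac{3}{20}}$, at $\widetilde{v}_H(\UU_X)=(-\frac12,\frac1{20})$, which plays the role of $\OO_X$ on the parabola. I work throughout with $\sigma_{s,q}$ and the gradient description of $\mu_{s,q}$ in the affine plane. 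In Step 1 I would show: if $E\in\Coh^{s}(X)$ with $s=-\frac12+\epsilon$ satisfies $\mu^+_{s,q}(E)\le\bar\mu$, then for a suitable $\epsilon'>0$ the object $E$ is an extension of objects in $\Coh^{s'}(X)$ (with $s'=-\frac12-\epsilon'$) and objects $\GG[1]$, where $\GG$ is slope-semistable with $\mu_H(\GG)=-\frac12$ and $\mu_{s,q}(\GG[1])=\mu_{s,q}(\UU_X[1])$. Decomposing $E$ via its standard cohomology $\HH^{-1}(E)[1]\to E\to\HH^0(E)$, the factors of $\HH^{0}(E)$ have $\mu_H>s>-\frac12>s'$ so $\HH^0(E)\in\Coh^{s'}(X)$, and the factors $\GG_i$ of $\HH^{-1}(E)$ with $\mu_H(\GG_i)<-\frac12$ give $\GG_i[1]\in\Coh^{s'}(X)$ after shrinking $\epsilon'$ (there are finitely many). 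For the remaining factors, with $\mu_H(\GG_i)\in[-\frac12,-\frac12+\epsilon]$, I would take $q$ near the top of the range \eqref{eq_q} so that $\mu_{s,q}(\UU_X[1])\to\bar\mu$ and hence $\mu_{s,q}(\GG_i[1])\le\mu^+_{s,q}(E)\le\mu_{s,q}(\UU_X[1])$; combining this with Li's stronger inequality \cite[Theorem 0.3]{Li_Fano3}, which puts $\widetilde{v}_H(\GG_i)$ outside $R_{\frac{3}{20}}$ (on or below $s^2-2q=\frac3{20}$), should force $\widetilde{v}_H(\GG_i)=\widetilde{v}_H(\UU_X)$, i.e. $\ch(\GG_i)_{\le2}$ a positive multiple of $(2,-H,\frac1{10}H^2)$ with equality of slopes. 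This is the exact analogue of the inequality \eqref{eq_somecomputation}.

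In Step 2 I would upgrade the factors $\GG[1]$ to extensions of copies of $\UU_X[1]$. Here $\GG$ is $\sigma_{s,q}$-semistable of the same slope as $\UU_X[1]$ with reduced character proportional to that of $\UU_X$, so by \cite[Theorem 0.1]{Li_Fano3} (i.e.\ Conjecture 4.1 of \cite{BMS}) one gets $\ch_3(\GG)\ge0$. Reducing to $\GG$ slope-stable and using that $\UU_X$ is exceptional, so $\chi(\UU_X,\UU_X)=1$, a Riemann--Roch computation using the Todd class of $X$ will give $\chi(\UU_X,\GG)>0$, hence $\hom(\UU_X,\GG)+\hom(\UU_X,\GG[2])>0$. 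As in the previous lemma one checks $\GG$ is reflexive, and by Serre duality
\[
\Hom(\UU_X,\GG[2])=\Hom(\GG,\UU_X(-H)[1])=0,
\]
the vanishing coming from $\sigma$-semistability in a tilt $\Coh^{s''}(X)$ with $s''$ slightly below $-\frac12$, together with the slope comparison $\mu(\UU_X(-H)[1])<\mu(\GG)$ (note $\mu_H(\UU_X(-H))=-\frac32$, and both $\UU_X(-H)[1]$ and $\GG$ lie on $\partial R_{\frac{3}{20}}$). Thus $\Hom(\UU_X,\GG)\ne0$, and comparing slope-stable objects of the same reduced character forces $\GG\cong\UU_X$ by the rigidity of $\UU_X$, inducting on the rank exactly as in the $\OO_X$ case.

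In Step 3 I would assemble the statement. Writing $A[1]\to F\to B$ for the decomposition of $F\in\Coh^{\bar\mu}_{-\frac12+\epsilon,q}(X)$, with $B\in\Coh^{s}(X)_{\mu_{s,q}>\bar\mu}$ and $A\in\Coh^{s}(X)_{\mu_{s,q}\le\bar\mu}$, Steps 1--2 applied to $A$ give $A[1]\in\langle\Coh^{s'}(X)[1],\UU_X[2]\rangle$. For $B$ I would invoke \cite[Lemma 3]{LiZhao2} to control the $\sigma_{s',q'}$-semistable factors of $B$ when deforming from $(s,q)$ to $(s',q')$ across the wall: as in Figure \ref{fig_3}, the relevant intersection point of the connecting line with the parabola has $s$-coordinate large enough that these factors have slope $>\mu_{s',q'}(\UU_X)>\bar\mu'$, whence $B\in\langle\Coh^{s'}(X)_{\mu_{s',q'}>\bar\mu'},\Coh^{s'}(X)[1]\rangle$. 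Since $\Coh^{s'}(X)_{\mu_{s',q'}>\bar\mu'}\subset\Coh^{\bar\mu'}_{s',q'}(X)$ and $\Coh^{s'}(X)[1]\subset\langle\Coh^{\bar\mu'}_{s',q'}(X),\Coh^{\bar\mu'}_{s',q'}(X)[1]\rangle$, collecting the pieces yields $F\in\langle\Coh^{\bar\mu'}_{-\frac12-\epsilon',q'}(X),\Coh^{\bar\mu'}_{-\frac12-\epsilon',q'}(X)[1],\UU_X[2]\rangle$, as desired.

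The main obstacle I anticipate is Step 1. Unlike the $\OO_X$ case, where the wall-object has $\ch_1=\ch_2=0$ and the constraint collapses to $e=f=0$, here $\UU_X$ has rank $2$ and nonzero $\ch_1,\ch_2$, so forcing a problematic factor onto the wall means combining the slope inequality $\mu_{s,q}(\GG_i[1])\le\mu_{s,q}(\UU_X[1])$ with the curved part $s^2-2q=\frac3{20}$ of $\partial R_{\frac{3}{20}}$ (rather than a tangent line) and exploiting integrality of $\ch(\GG_i)$ to exclude the range $\mu_H(\GG_i)\in(-\frac12,-\frac12+\epsilon]$. A secondary difficulty is the identification $\GG\cong\UU_X$ in Step 2, which requires the precise Serre-duality vanishing and the rigidity of $\UU_X$ in place of the simpler argument available for the line bundle $\OO_X$.
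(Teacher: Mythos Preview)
Your three-step structure matches the paper's proof exactly, and Steps~1 and~3 are essentially identical to what the paper does (the paper dispatches Step~1 quickly via a picture argument, Figure~\ref{fig_4}, rather than your anticipated detailed inequality chase). The divergence is entirely in Step~2.

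The paper does \emph{not} run the Euler-characteristic argument you propose. Instead, once $\GG$ has reduced character on the boundary $\partial R_{\frac{3}{20}}$, it invokes \cite[Proposition~3.2]{Li_Fano3}, which says directly that such a slope-stable sheaf has rank $1$ or $2$; rank $1$ is excluded since there is no class $(1,-\tfrac12 H,\dots)$ in $\NN(X)$, so $\ch_{\le 2}(\GG)=\ch_{\le 2}(\UU_X)$ on the nose. Then the vanishing $\Hom(\GG,\OO_X[2])=\Hom(\OO_X(H),\GG[-1])=0$ (by stability) feeds into \cite[Theorem~3.14]{Lo}, which forces $\GG$ to be a vector bundle with $\ch(\GG)=\ch(\UU_X)$, whence $\GG\cong\UU_X$. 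This is considerably shorter and avoids the $\ch_3$ bookkeeping.

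Your approach has a genuine gap at the final step. Suppose you establish $\Hom(\UU_X,\GG)\ne 0$ with $\GG$ slope-stable of slope $-\tfrac12$. The standard slope argument shows any nonzero map $\UU_X\to\GG$ is injective and forces $\rk(\GG)=2$, so $\ch_{\le 2}(\GG)=\ch_{\le 2}(\UU_X)$; but the cokernel is then a sheaf supported on points, and nothing you have written rules it out. (This is a real difference from the $\OO_X$ case: there the source has rank $1$, so the injection $\OO_X\hookrightarrow\GG$ between slope-stable sheaves of equal slope and equal rank, combined with $\ch_2=0$, forces the cokernel to be zero-dimensional \emph{and} $\GG$ to be torsion-free with $\GG^{\vee\vee}\cong\OO_X$; reflexivity then finishes.) For rank $2$ you need either an upper bound on $\ch_3(\GG)$ matching $\ch_3(\UU_X)$, or an argument that $\GG$ is locally free---and that is precisely what \cite[Theorem~3.14]{Lo} supplies. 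Your ``inducting on the rank'' does not help here: the cokernel has rank $0$, so there is nothing to induct on. If you want to salvage your route, you should either import Li's rank bound and Lo's theorem at this point, or produce an independent $\ch_3$ inequality sharp enough to pin down $\ch(\GG)$.
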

\begin{proof}
Fix $s=-\frac{1}{2}+\epsilon$.

\textbf{Step 1:} We claim that if $E \in \Coh^s(X)$ such that $\mu^+_{s,q}(E) \leq \bar{\mu}$, then there exist $ \epsilon'>0$ very small such that $E$ is an extension of objects in $\Coh^{s'}(X)$ where $s':=-\frac{1}{2}-\epsilon'$, and objects of the form $\GG[1]$, where $\GG$ is a slope semistable coherent sheaf with $\mu_H(\GG)=-\frac{1}{2}$ and $\mu_{s,q}^+(\GG[1])= \mu_{s,q}(\UU_X[1])$.

We follow the same argument as in Step 1 of the proof of Lemma \ref{lemma_coh_epsilon}. Up to taking $q \to \frac{1}{20}-\frac{1}{10}\epsilon$, we can assume that $\mu^+_{s,q}(E) \leq \mu_{s,q}(\UU_X[1])$. Up to choosing $\epsilon'$ small enough, we have that $E$ is an extension of objects in $\Coh^{s'}(X)$ and objects of the form $\GG[1]$, where $\GG$ is a slope semistable coherent sheaf with $-\frac{1}{2} \leq \mu_H(\GG) \leq s$ and $\mu_{s,q}^+(\GG[1]) \leq  \mu_{s,q}(\UU_X[1])$. However, by a direct computation or comparing the slopes using Figure \ref{fig_4} we must have $\mu_H(\GG)=-\frac{1}{2}$ and $\mu_{s,q}(\GG[1]) =  \mu_{s,q}(\UU_X[1])$, as we wanted. 

\begin{figure}[htb]
\centering
\begin{tikzpicture}[domain=-6:6]
\draw[->] (-6.3,0) -- (6.3,0) node[right] {$s$};
\draw[->] (0,-2) -- (0, 5.5) node[above] {$q$};
\draw plot (\x,{0.125*\x*\x}) node[above] {$q = \frac{1}{2}s^{2}$};
\coordinate (O) at (0,0);
\node  at (O) {$\bullet$};
\draw (O) node [above right]  {$\mathcal{O}_{X}$};

\draw plot (\x,{0.125*\x*\x -0.3}) node[right] {$q = \frac{1}{2}s^{2}- \frac{3}{40}$};

\coordinate (A) at (-1.5,0);
\draw[red] (A)--(O) ;

\draw[domain=-1.9:-1.5,color=red] plot (\x,{0.125*\x*\x -0.3});

\coordinate (U) at (-1.9,0.17); 
\node at (U) {$\bullet$};  
\draw (U) node [below left]  {$\mathcal{U}_{X}[1]$}; 
  
\draw[red] (U)--(O); 

\draw (U) -- (-1.9,-2);
\draw (A) -- (-1.5,-2);

\coordinate (P) at (-1.6,0.09);
\node at (P) {$\cdot$};

\draw[dashed] (P) -- (-1.6,-2);

\draw[domain=-1.9:1, color=blue] plot (\x, {-0.26*\x -0.33});

\coordinate (G) at (-1.8,-1); 
\node at (G) {$\bullet$};  
\draw (G) node [below left]  {$\mathcal{G}[1]$}; 

\draw[domain=-1.8:-0.5] plot (\x, {5*\x+8});

\end{tikzpicture}

\caption{The dashed arrow is the vertical line $s=-\frac{1}{2}+\epsilon$, passing through the point $(s,q)$ corresponding to the fixed tilt stability. The slope $\mu_{s,q}(\UU_X[1])$ is represented in blue. When $\GG[1]$ corresponds to a point in the region $-\frac{1}{2} \leq s \leq -\frac{1}{2}+\epsilon$ and $q \leq \frac{1}{2}s^2-\frac{3}{40}$, we have that $\mu_{s,q}(\GG[1]) \geq \mu_{s,q}(\UU_X[1])$.  \label{fig_4}}  
\end{figure}
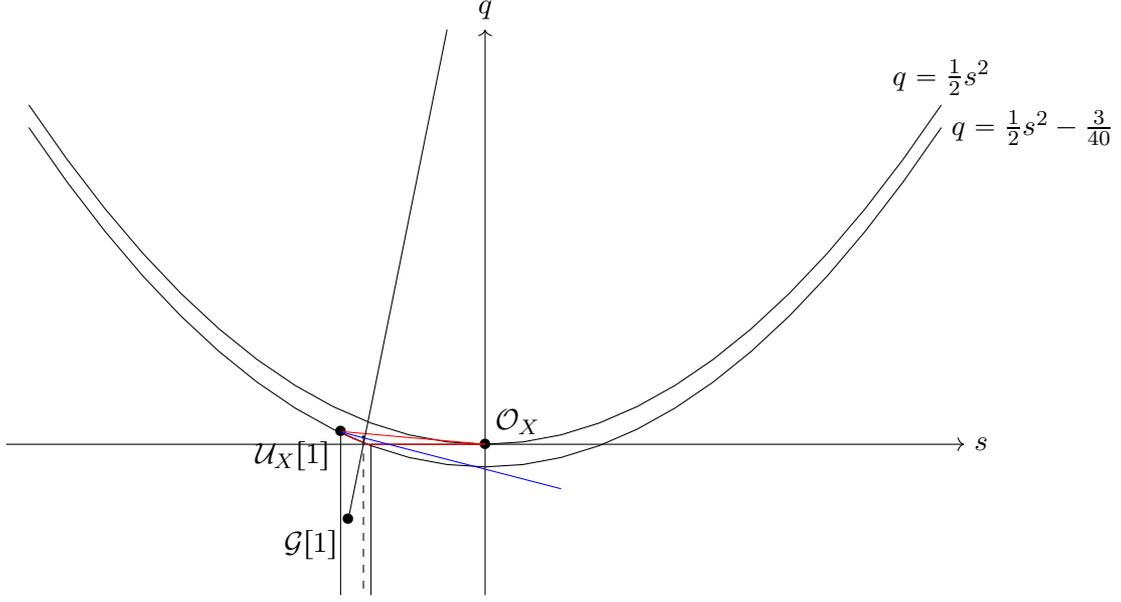

\textbf{Step 2:} We claim that the objects of the form $\GG[1]$ in Step 1 are extensions of copies of $\UU_X[1]$.

Indeed, 
the point $\left( \frac{\ch_1(\GG)}{\rk(\GG)}, \frac{\ch_2(\GG)}{\rk(\GG)} \right)$ belongs to the boundary of $R_{\frac{3}{20}}$. Thus by \cite[Proposition 3.2]{Li_Fano3} $\rk(\GG)$ is either $1$ or $2$. We exclude the case $\rk(\GG)=1$, as the numerical Grothendieck group of $X$ does not contain the class of such object by \cite[Proposition 3.9]{Kuz}. Moreover, $\GG$ is a slope semistable torsion-free sheaf. Since $\Hom(\GG, \OO_X[2])=\Hom(\OO_X(H),\GG[-1])=0$ by stability, \cite[Theorem 3.14]{Lo} implies that $\GG$ is a vector bundle with $\ch(\GG)=\ch(\UU_X)$. It follows that $\GG \cong \UU_X$.

\textbf{Step 3:} We end by showing the statement of the lemma, arguing as in Step 3 of the proof of Lemma \ref{lemma_coh_epsilon}. 

Consider $F \in \Coh^{\bar{\mu}}_{s,q}(X)$. By definition $F$ is an extension of the form
$$A[1] \to F \to B$$
where $B$ (resp.\ $A$) belongs to $\Coh^s(X)_{\mu_{s,q}>\bar{\mu}}$ (resp.\ $\Coh^s(X)_{\mu_{s,q}\leq \bar{\mu}}$).  

By Step 2, we can find $s'$ such that $A[1]$ is an extension of objects in $\Coh^{s'}(X)[1]$ and copies of $\UU_X[2]$. In particular, $A[1] \in \langle \Coh^{s'}(X)[1], \UU_X[2] \rangle$. On the other hand, note that $B \in \langle \Coh^{s'}(X), \Coh^{s'}(X)[1] \rangle$. It is not restrictive to assume that $B$ is $\sigma_{s,q}$-semistable. By \cite[Lemma 3]{LiZhao2}, we have that
$$\mu_{s',q'}(B^+) \geq \mu^+_{s',q'}(B), \quad \mu^-_{s',q'}(B) \geq \mu_{s',q'}(B^-).$$
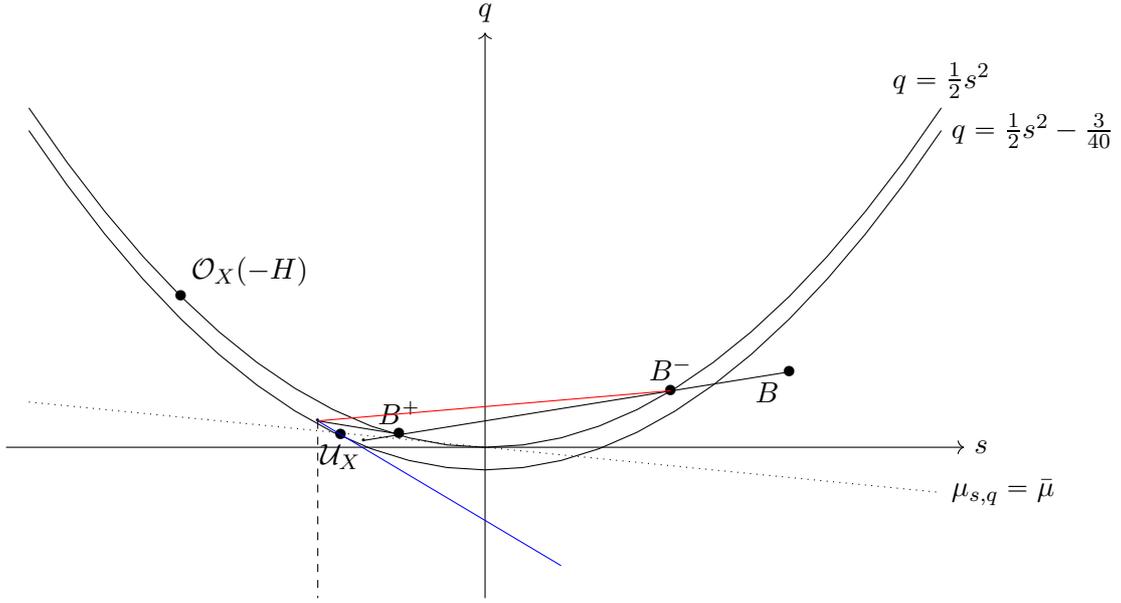
\begin{figure}[htb]
\centering
\begin{tikzpicture}[domain=-6:6]
\draw[->] (-6.3,0) -- (6.3,0) node[right] {$s$};
\draw[->] (0,-2) -- (0, 5.5) node[above] {$q$};
\draw plot (\x,{0.125*\x*\x}) node[above] {$q = \frac{1}{2}s^{2}$};
\coordinate (O) at (0,0);

\draw plot (\x,{0.125*\x*\x -0.3}) node[right] {$q = \frac{1}{2}s^{2}- \frac{3}{40}$};

\coordinate (A) at (-1.5,0);


\coordinate (U) at (-1.9,0.17); 
\node at (U) {$\bullet$};  
\draw (U) node [below]  {$\mathcal{U}_{X}$}; 
  

\coordinate (OH) at (-4,2);
\node  at (OH) {$\bullet$};
\draw (-4,2) node [above right]  {$\mathcal{O}_{X}(-H)$};

\coordinate (M) at (-2.3,0.38);

\coordinate (P) at (-1.6,0.09);
\node at (P) {$\cdot$};


\coordinate (Q) at (-2.2,0.35);
\node at (Q) {$\cdot$};
\draw[dashed] (Q) -- (-2.2,-2);

\draw[dotted] plot (\x,{-0.1*\x}) node[right] {$\mu_{s,q} = \bar{\mu}$};

\coordinate (B) at (4,1);
\node at (B) {$\bullet$};  
\draw (B) node [below left]  {$B$}; 

\draw (B)--(P);

\coordinate (B+) at (-1.13,0.175); 
\node at (B+) {$\bullet$};
\draw (B+) node [above]  {$B^+$};

\coordinate (B-) at (2.44,0.75); 
\node at (B-) {$\bullet$};
\draw (B-) node [above]  {$B^-$};

\draw (Q)--(B+);
\draw[red] (Q)--(B-);

\draw[domain=-2.2:1, color=blue] plot (\x,{-0.6*\x-0.97});

\end{tikzpicture}

\caption{We represent in blue the slope of $\UU_X$ with respect to $(s',q')$, and in red that of $B^-$, where $s'=-\frac{1}{2}-\epsilon'$.  \label{fig_5}} 
\end{figure}

Comparing the slopes as in Figure \ref{fig_5}, we see that 
$$\mu_{s',q'}(B^-) > \mu_{s',q'}(\UU_X) > -\frac{9}{10}=\bar{\mu}'.$$
We deduce that $B \in \langle \Coh^{s'}(X)_{\mu_{s',q' > \bar{\mu}'}}, \Coh^{s'}(X)[1] \rangle$.
Putting everything together, we deduce the statement. 
\end{proof}

\begin{lemma}
\label{lemma_leftmutationU}
Fix $\bar{\mu}=-\frac{1}{10}$ and $\epsilon>0$ very small. Then there exists $q$ satisfying \eqref{eq_q} such that if $F \in \CCoh^{\bar{\mu}}_{-\frac{1}{2}+\epsilon,q}(X)$, then $\L_{\UU_X}(F)$ is in $\CCoh^{\bar{\mu}}_{-\frac{1}{2}+\epsilon,q}(X)$.
\end{lemma}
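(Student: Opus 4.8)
The plan is to mirror the proof of Lemma \ref{lemma_leftmutationO}, replacing the line bundle $\OO_X$ by the rank-two exceptional bundle $\UU_X$ and the heart $\Coh^0_{\alpha,\epsilon}(X)$ by $\Coh^{\bar{\mu}}_{s,q}(X)$ with $s=-\frac{1}{2}+\epsilon$. First I would record that $\UU_X[2]$ lies in the heart. Since $\UU_X$ is slope stable with $\mu_H(\UU_X)=-\frac{1}{2}<s$, we have $\UU_X[1]\in\Coh^s(X)$, and as its reduced character lies on the boundary of $R_{\frac{3}{20}}$ it is $\sigma_{s,q}$-stable by \cite[Corollary 3.11]{BMS} (exactly as in the proof of Proposition \ref{prop_stabcondKu}). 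The inequality $\mu_{s,q}(\UU_X[1])=\frac{\frac{1}{10}-2q}{-2\epsilon}<\bar{\mu}$ valid for $q$ as in \eqref{eq_q} then shows $\UU_X[2]=\UU_X[1][1]\in\Coh^{\bar{\mu}}_{s,q}(X)$. Applying \cite[Lemma 5.9]{BLMS} to the exceptional object $\UU_X$, as in Lemma \ref{lemma_leftmutationO}, yields a five-term exact sequence in $\Coh^{\bar{\mu}}_{s,q}(X)$
$$0 \to \HH^{-1}(\L_{\UU_X}(F)) \to \UU_X[2]^{\oplus k_0} \xrightarrow{\phi} F \to \HH^0(\L_{\UU_X}(F)) \to \UU_X[2]^{\oplus k_1} \to 0,$$
so that $\L_{\UU_X}(F)\in\Coh^{\bar{\mu}}_{s,q}(X)$ is equivalent to the vanishing $\HH^{-1}(\L_{\UU_X}(F))=\ker\phi=0$.

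To force this vanishing I would compute $\mu^{\bar{\mu}}_{s,q}(\UU_X[2])$ and show it tends to $+\infty$ as $q$ approaches the upper endpoint $\frac{1}{20}-\frac{1}{10}\epsilon$ of the interval \eqref{eq_q}: indeed $\mu_{s,q}(\UU_X[1])\uparrow\bar{\mu}$ there, so after the rotation by $u^{-1}$ defining the second tilt the slope of $\UU_X[2]$ diverges. Fixing such a $q$ (still satisfying \eqref{eq_q}, hence still inducing a stability condition by Proposition \ref{prop_stabcondKu}(2)), the object $\UU_X[2]$ becomes the $\sigma^{\bar{\mu}}_{s,q}$-stable object of maximal slope. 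This is the exact analogue of the computation $\mu^0_{\alpha,\epsilon}(\OO_X[2])\to+\infty$ in Lemma \ref{lemma_leftmutationO}, with the free parameter now being $q$ rather than $\alpha$.

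With this choice, $\ker\phi$ is a subobject of the semistable object $\UU_X[2]^{\oplus k_0}$, hence lies in the $\sigma^{\bar{\mu}}_{s,q}$-semistable slice $\PP(\phi_0)$ of $\UU_X[2]$, and since $\UU_X[2]$ is stable it is isomorphic to $\UU_X[2]^{\oplus k'}$ for some $k'\le k_0$. Because the left mutation satisfies $\mathrm{RHom}(\UU_X,\L_{\UU_X}(F))=0$ for every $F$, and $\HH^{-1}(\L_{\UU_X}(F))$ is the lowest nonzero cohomology, the lowest-corner term $\Ext^{-2}(\UU_X,\UU_X[2]^{\oplus k'})=\C^{\oplus k'}$ of the hyper-$\Ext$ spectral sequence survives and must vanish in the abutment; as $\UU_X$ is exceptional this gives $k'=0$, so $\ker\phi=0$. (Equivalently, as phrased in Lemma \ref{lemma_leftmutationO}, since $\UU_X[2]$ is the stable factor of largest slope there is no nonzero map of a copy of $\UU_X[2]$ into $F$, forcing $\ker\phi=0$.) Hence $\L_{\UU_X}(F)\in\Coh^{\bar{\mu}}_{s,q}(X)$, which is the assertion.

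I expect Step 2 to be the main obstacle: one must check that a single $q$ in \eqref{eq_q} simultaneously keeps $(s,q)$ in the region of Proposition \ref{prop_stabcondKu}(2) and makes $\mu^{\bar{\mu}}_{s,q}(\UU_X[2])$ dominate the slopes appearing in the five-term sequence. The rank-two nature of $\UU_X$ (as opposed to the line bundle $\OO_X$) is the delicate point that distinguishes this from the previous section: it is precisely the stability of $\UU_X[1]$ on the boundary of Li's region (via \cite[Theorem 0.3]{Li_Fano3}) together with $\Ext^\bullet(\UU_X,\UU_X)=\C$ that guarantees $\UU_X[2]$ is a genuine stable generator of its slice, which is what makes the kernel computation go through.
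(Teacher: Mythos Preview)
Your proposal is correct and follows essentially the same approach as the paper: the paper's proof simply computes $\textrm{Im}\,Z^{\bar{\mu}}_{s,q}(\UU_X[2])=\frac{\sqrt{101}}{101}(-2\epsilon+1-20q)\to 0$ as $q\to\frac{1}{20}-\frac{1}{10}\epsilon$, so $\mu^{\bar{\mu}}_{s,q}(\UU_X[2])\to+\infty$, and then invokes the argument of Lemma \ref{lemma_leftmutationO} verbatim. Your version is more detailed---you spell out why $\UU_X[2]$ lies in the heart and give a spectral-sequence justification for $\HH^{-1}=0$---but the key computation (varying $q$ toward the upper endpoint of \eqref{eq_q} rather than varying $\alpha$) and the five-term exact sequence mechanism are identical.
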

\begin{proof}
Note that $\bar{\mu}=-\frac{\textrm{Re}u}{\textrm{Im}u}$ for $u=\frac{1}{\sqrt{101}}(1+10\sqrt{-1})$. In particular, by definition
$$Z^{\bar{\mu}}_{s,q}(-)=\frac{1}{u}Z_{s,q}(-) = \left( \frac{\sqrt{101}}{101}-  \frac{10\sqrt{101}}{101}\sqrt{-1} \right)Z_{s,q}(-).$$
Set $s=-\frac{1}{2}+\epsilon$. Then $\textrm{Im}Z^{\bar{\mu}}_{s,q}(\UU_X[2])=\frac{\sqrt{101}}{101}(-2\epsilon+1-20q)$ which converges to $0$ as $q \to \frac{1}{20}-\frac{1}{10}\epsilon$. Thus $\mu_{s,q}^{\bar{\mu}}(\UU_X[2]) \to +\infty$ for $q \to \frac{1}{20}-\frac{1}{10}\epsilon$. The same argument of Lemma \ref{lemma_leftmutationO} implies the statement.
\end{proof}

The next results follow from Lemma \ref{lemma_coh2tilt} and Lemma \ref{lemma_leftmutationU}, arguing as in Lemma \ref{lemma_leftmutationheart}, Proposition \ref{prop_leftmutationstab}, Corollary \ref{cor_leftmutationstab}.

\begin{lemma}
\label{lemma_leftmutationheartafterU}
Let $\epsilon>0$ be very small. Then there exist $q$ satisfying \eqref{eq_q}, $\epsilon'>0$ very small and $q'$ satisfying \eqref{eq_q'} such that
$$\L_{\UU_X}(\AA(-\frac{1}{2}+\epsilon,q)) \subset \langle \AA(-\frac{1}{2}-\epsilon',q'), \A(-\frac{1}{2}-\epsilon',q')[1] \rangle.$$
\end{lemma}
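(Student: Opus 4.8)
The plan is to run the proof of Lemma \ref{lemma_leftmutationheart} almost verbatim, with the left mutation $\L_{\UU_X}$ in place of $\L_{\OO_X}$ and the pair $(\Ku(X)_2, \Ku(X)_1)$ in place of $(\Ku(X)_3, \Ku(X)_2)$. First I would fix $\epsilon > 0$ small, take $F \in \AA(-\frac{1}{2}+\epsilon, q)$ for $q$ satisfying \eqref{eq_q}, and use Lemma \ref{lemma_leftmutationU} to choose $q$ within \eqref{eq_q} so that the left mutation lands back in a double-tilted heart, $\L_{\UU_X}(F) \in \Coh^{\bar{\mu}}_{-\frac{1}{2}+\epsilon, q}(X)$ with $\bar{\mu} = -\frac{1}{10}$. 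Since $\L_{\UU_X}$ carries $\Ku(X)_2$ to $\Ku(X)_1$ (see the decomposition \eqref{eq_defofSerrefunctor} and the discussion in Section \ref{sec_proof}), the object $\L_{\UU_X}(F)$ lies in $\Ku(X)_1$.

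Next I would deform across the vertical wall of $\UU_X$. Applying Lemma \ref{lemma_coh2tilt} to $\L_{\UU_X}(F)$ yields $\epsilon' > 0$ small and $q'$ satisfying \eqref{eq_q'} with
$$\L_{\UU_X}(F) \in \langle \Coh^{\bar{\mu}'}_{-\frac{1}{2}-\epsilon', q'}(X),\ \Coh^{\bar{\mu}'}_{-\frac{1}{2}-\epsilon', q'}(X)[1],\ \UU_X[2] \rangle,$$
where $\bar{\mu}' = -\frac{9}{10}$. As in Lemma \ref{lemma_leftmutationheart} I would then pass to the restricted heart on $\Ku(X)_1$: since $\L_{\UU_X}(F)$ lies in $\Ku(X)_1$, its cohomology objects with respect to $\Coh^{\bar{\mu}'}_{-\frac{1}{2}-\epsilon', q'}(X)$ again lie in $\Ku(X)_1$ by \cite[Lemma 4.3]{BLMS}. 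The key point, exactly parallel to the role played by $\OO_X[2]$ in the previous section, is that the extra generator $\UU_X[2]$ is orthogonal to $\Ku(X)_1 = \langle \UU_X, \OO_X \rangle^{\perp}$, so this summand cannot contribute to an object of $\Ku(X)_1$ and drops out. Writing $\AA(-\frac{1}{2}-\epsilon', q') = \Coh^{\bar{\mu}'}_{-\frac{1}{2}-\epsilon', q'}(X) \cap \Ku(X)_1$, which is a heart of a stability condition on $\Ku(X)_1$ by Proposition \ref{prop_stabcondKu}(1), I would conclude
$$\L_{\UU_X}(F) \in \langle \AA(-\tfrac{1}{2}-\epsilon', q'),\ \AA(-\tfrac{1}{2}-\epsilon', q')[1] \rangle.$$

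Finally I would settle the uniformity issues, exactly as in Lemma \ref{lemma_leftmutationheart}: a single pair $(\epsilon', q')$ must serve every $F \in \AA(-\frac{1}{2}+\epsilon, q)$ at once, which is arranged by shrinking $\epsilon'$, and the statement should be independent of the auxiliary parameters $q, q'$, which follows from Lemma \ref{lemma_heartchangingsq} since for a fixed first coordinate the restricted heart does not depend on the second coordinate within the admissible interval. I expect the main obstacle to be bookkeeping rather than conceptual: one must ensure the hypotheses of Lemma \ref{lemma_coh2tilt} are genuinely met, namely that $\L_{\UU_X}(F)$ lies in $\Coh^{\bar{\mu}}_{-\frac{1}{2}+\epsilon, q}(X)$ for the full admissible range of $q$ (extracted from Lemma \ref{lemma_leftmutationU} together with the $q$-independence of the restricted heart), and one must confirm that the additional object coming out of Lemma \ref{lemma_coh2tilt} is precisely $\UU_X[2]$, so that the orthogonality $\UU_X \perp \Ku(X)_1$ annihilates it and \cite[Lemma 4.3]{BLMS} collapses the three-term bracket to the desired two-term one.
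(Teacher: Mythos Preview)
Your proposal is correct and follows exactly the approach the paper intends: the paper's own proof is the one-line remark that the result ``follows from Lemma \ref{lemma_coh2tilt} and Lemma \ref{lemma_leftmutationU}, arguing as in Lemma \ref{lemma_leftmutationheart}'', and you have faithfully unpacked that parallel argument. One minor wording issue: $\UU_X$ is only semi-orthogonal to $\Ku(X)_1$ (it lies in ${}^\perp\Ku(X)_1$), but your actual mechanism for eliminating the $\UU_X[2]$ contribution---that the cohomology objects of $\L_{\UU_X}(F)$ with respect to the ambient heart lie in $\Ku(X)_1$ by \cite[Lemma~4.3]{BLMS}, while $\UU_X \notin \Ku(X)_1$---is the correct one.
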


\begin{prop}
\label{prop_leftmutationstabafterU}
Let $\epsilon>0$ be very small. Then there exist $q$ satisfying \eqref{eq_q}, $\epsilon'>0$ very small and $q'$ satisfying \eqref{eq_q'} such that there exists $\widetilde{g} \in \widetilde{\emph{GL}}^+_2(\R)$ satisfying
$$\L_{\UU_X} \cdot \sigma(-\frac{1}{2}+\epsilon,q)= \sigma(-\frac{1}{2}-\epsilon',q') \cdot \widetilde{g}.$$
\end{prop}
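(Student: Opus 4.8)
The plan is to repeat, verbatim in structure, the argument of Proposition \ref{prop_leftmutationstab}, now for the left mutation $\L_{\UU_X}\colon \Ku(X)_2 \to \Ku(X)_1$ in place of $\L_{\OO_X}\colon \Ku(X)_3 \to \Ku(X)_2$. First I would record that the stability condition $\L_{\UU_X} \cdot \sigma(-\frac{1}{2}+\epsilon,q)$ has heart $\L_{\UU_X}(\AA(-\frac{1}{2}+\epsilon,q))$ and central charge $Z' := Z(-\frac{1}{2}+\epsilon,q) \circ (\L_{\UU_X})_*^{-1}$, where $(\L_{\UU_X})_*$ is the induced isomorphism $\NN(\Ku(X)_2) \to \NN(\Ku(X)_1)$. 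By Lemma \ref{lemma_leftmutationheartafterU} this heart satisfies $\L_{\UU_X}(\AA(-\frac{1}{2}+\epsilon,q)) \subset \langle \AA(-\frac{1}{2}-\epsilon',q'), \AA(-\frac{1}{2}-\epsilon',q')[1]\rangle$, i.e.\ it is a tilt of $\AA(-\frac{1}{2}-\epsilon',q')$. Hence, exactly as in Proposition \ref{prop_leftmutationstab}, it suffices to exhibit $M \in \text{GL}^+_2(\R)$ with $Z' = M^{-1}\cdot Z(-\frac{1}{2}-\epsilon',q')$, equivalently
\[
Z(-\tfrac{1}{2}+\epsilon,q) = M^{-1}\cdot Z(-\tfrac{1}{2}-\epsilon',q') \cdot (\L_{\UU_X})_*,
\]
together with a lift $\widetilde{g}=(g,M)\in \widetilde{\text{GL}}^+_2(\R)$ such that $g((0,1]) \subset (0,2]$; then $\sigma(-\frac{1}{2}-\epsilon',q') \cdot \widetilde{g}$ and $\L_{\UU_X}\cdot\sigma(-\frac{1}{2}+\epsilon,q)$ share the central charge $Z'$ and have hearts that are tilts of the common heart $\AA(-\frac{1}{2}-\epsilon',q')$, so they coincide by \cite[Lemma 8.11]{BMS}.

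Next I would carry out the numerical computation. Since $\UU_X$ is exceptional, on the Grothendieck group one has $(\L_{\UU_X})_*[c] = [c] - \chi(\UU_X,c)[\UU_X]$, so I would compute $(\L_{\UU_X})_*(c_1)$ and $(\L_{\UU_X})_*(c_2)$ — with $c_1,c_2$ the basis of $\NN(\Ku(X)_2)$ from \eqref{eq_c12} — and express them in the basis $b_1,b_2$ of $\NN(\Ku(X)_1)$ from \eqref{eq_basis}, the Euler pairings being evaluated by Riemann--Roch using the Chern character of $\UU_X$ (with $\ch_{\leq 2}(\UU_X)=(2,-H,\frac{1}{10}H^2)$) and $\td(X)$. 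With these in hand, I would evaluate $Z(-\frac{1}{2}+\epsilon,q)$ on $c_1,c_2$ and $Z(-\frac{1}{2}-\epsilon',q')$ on $(\L_{\UU_X})_*(c_1),(\L_{\UU_X})_*(c_2)$, forming the two $2\times 2$ real matrices of their components in $\C \cong \R^2$. The existence of $M$ with $\det M>0$ amounts to both matrices having determinant of the same sign, and as in Proposition \ref{prop_leftmutationstab} I would check that both are positive for $q$ satisfying \eqref{eq_q} and $q'$ satisfying \eqref{eq_q'} with $\epsilon,\epsilon'$ small. Since multiplication by the unit vectors $1/u$, $1/u'$ (here $u=\frac{1}{\sqrt{101}}(1+10\sqrt{-1})$ for $\bar{\mu}=-\frac{1}{10}$ and $u'=\frac{1}{\sqrt{181}}(9+10\sqrt{-1})$ for $\bar{\mu}'=-\frac{9}{10}$) preserves orientation, it is enough to test the determinants for the unnormalized $Z_{s,q}$.

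The remaining point is to solve explicitly for the entries of $M^{-1}$ from the linear system obtained by equating $Z(-\frac{1}{2}+\epsilon,q)(c_j)$ with $M^{-1}\cdot Z(-\frac{1}{2}-\epsilon',q')\big((\L_{\UU_X})_*(c_j)\big)$ for $j=1,2$, and to check that the resulting $M$ admits a lift $\widetilde{g}$ with $g((0,1])\subset(0,2]$; this is done by computer, exactly as in Proposition \ref{prop_leftmutationstab}, after which \cite[Lemma 8.11]{BMS} concludes. I expect the main obstacle to be precisely the computation of $(\L_{\UU_X})_*$: because $\UU_X$ has rank $2$ (in contrast with the line bundle $\OO_X$ handled in Proposition \ref{prop_leftmutationstab}), the Euler pairings $\chi(\UU_X,c_j)$ and the ensuing change of basis are genuinely more involved, and confirming the positivity of the two determinants uniformly over the admissible ranges \eqref{eq_q} and \eqref{eq_q'} requires this explicit verification rather than a soft orientation argument.
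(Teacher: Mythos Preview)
Your proposal is correct and follows exactly the approach the paper indicates: the paper simply states that Proposition \ref{prop_leftmutationstabafterU} follows from Lemmas \ref{lemma_coh2tilt} and \ref{lemma_leftmutationU} by arguing as in Proposition \ref{prop_leftmutationstab}, and you have spelled out precisely that argument with the appropriate substitutions ($\L_{\UU_X}$ for $\L_{\OO_X}$, the bases $c_i \mapsto b_i$, the unit vectors for $\bar\mu=-\tfrac{1}{10}$ and $\bar\mu'=-\tfrac{9}{10}$, and the heart relation from Lemma \ref{lemma_leftmutationheartafterU}). Your expectation that the main computational burden lies in evaluating $(\L_{\UU_X})_*$ via $\chi(\UU_X,-)$ is accurate, and the paper likewise defers such checks to a routine (computer) verification.
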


\begin{cor}
\label{cor_leftmutationstabafterU}
For $i=1, 2$, if $\sigma(s_i,q_i)$ is a stability condition on $\Ku(X)_i$, then there exists $\widetilde{g} \in \widetilde{\emph{GL}}^+_2(\R)$ such that
$$\L_{\UU_X} \cdot \sigma(s_2,q_2)= \sigma(s_1,q_1) \cdot \widetilde{g}.$$
\end{cor}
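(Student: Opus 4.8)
The plan is to bootstrap the single base case treated in Proposition~\ref{prop_leftmutationstabafterU} to an arbitrary pair of induced stability conditions, using the fact that on each $\Ku(X)_i$ all the stability conditions of Proposition~\ref{prop_stabcondKu} lie in one $\widetilde{\mathrm{GL}}^+_2(\R)$-orbit (Proposition~\ref{prop_sameorbit}), together with the compatibility of the mutation functor with the $\widetilde{\mathrm{GL}}^+_2(\R)$-action. This is precisely the argument used to pass from Proposition~\ref{prop_leftmutationstab} to Corollary~\ref{cor_leftmutationstab}, now applied to $\L_{\UU_X}\colon \Ku(X)_2 \to \Ku(X)_1$ in place of $\L_{\OO_X}\colon \Ku(X)_3 \to \Ku(X)_2$.

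First I would record that $\L_{\UU_X}$ restricts to an exact equivalence $\Ku(X)_2 \xrightarrow{\sim} \Ku(X)_1$ (as noted after \eqref{eq_Ku3}, the three categories $\Ku(X)_i$ are identified by mutations), inducing an isomorphism $(\L_{\UU_X})_*$ of the numerical lattices and hence a well-defined map on stability manifolds $\sigma \mapsto \L_{\UU_X}\cdot\sigma = (\L_{\UU_X}(\mathcal P),\, Z \circ (\L_{\UU_X})_*^{-1})$. Since the autoequivalence action modifies only the slicing and precomposes the central charge with $(\L_{\UU_X})_*^{-1}$, while a $\widetilde g=(g,M)\in\widetilde{\mathrm{GL}}^+_2(\R)$ acts by relabelling phases and postcomposing $Z$ with $M^{-1}$, the two operations act on independent data and therefore commute: $\L_{\UU_X}\cdot(\sigma\cdot\widetilde g)=(\L_{\UU_X}\cdot\sigma)\cdot\widetilde g$.

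Next, given an arbitrary induced $\sigma(s_2,q_2)$ on $\Ku(X)_2$, Proposition~\ref{prop_sameorbit} furnishes $\widetilde h\in\widetilde{\mathrm{GL}}^+_2(\R)$ with $\sigma(s_2,q_2)=\sigma(-\frac{1}{2}+\epsilon,q)\cdot\widetilde h$ for the distinguished point of Proposition~\ref{prop_leftmutationstabafterU}. Applying $\L_{\UU_X}$, commuting the two actions, and invoking Proposition~\ref{prop_leftmutationstabafterU}, I obtain
\[
\L_{\UU_X}\cdot\sigma(s_2,q_2)=\bigl(\L_{\UU_X}\cdot\sigma(-\tfrac{1}{2}+\epsilon,q)\bigr)\cdot\widetilde h=\sigma(-\tfrac{1}{2}-\epsilon',q')\cdot\widetilde g_0\,\widetilde h
\]
for some $\widetilde g_0\in\widetilde{\mathrm{GL}}^+_2(\R)$. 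A final application of Proposition~\ref{prop_sameorbit} on $\Ku(X)_1$ rewrites $\sigma(-\frac{1}{2}-\epsilon',q')=\sigma(s_1,q_1)\cdot\widetilde k$, and setting $\widetilde g:=\widetilde k\,\widetilde g_0\,\widetilde h$ yields the claim.

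I expect no serious obstacle here: the entire analytic content, namely the heart comparison of Lemma~\ref{lemma_coh2tilt}, the stability of $\L_{\UU_X}(F)$ in Lemma~\ref{lemma_leftmutationU}, and the explicit central-charge computation behind Proposition~\ref{prop_leftmutationstabafterU}, has already been carried out for one representative pair, so the corollary is a purely formal transport along $\widetilde{\mathrm{GL}}^+_2(\R)$-orbits. The only point demanding a line of justification is that $\L_{\UU_X}$ is genuinely lattice-compatible, so that its action is defined and commutes with $\widetilde{\mathrm{GL}}^+_2(\R)$; this is immediate from the mutation description of the $\Ku(X)_i$ and the structure of the two actions recalled in Section~\ref{sec_preliminary}.
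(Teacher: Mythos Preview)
Your proposal is correct and follows essentially the same approach as the paper: the paper simply states that the corollary follows by arguing as in Corollary~\ref{cor_leftmutationstab}, i.e., by using Proposition~\ref{prop_sameorbit} on each side together with the commutativity of the equivalence action and the $\widetilde{\mathrm{GL}}^+_2(\R)$-action to transport the base case of Proposition~\ref{prop_leftmutationstabafterU}. Your extra remarks on lattice-compatibility and the commutation of the two actions make explicit what the paper leaves implicit, but add nothing new.
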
 

\subsection{End of the proof} 
\label{sec_endofproof}

We are now ready to complete the proof of our main result. 

\begin{proof}[Proof of Theorem \ref{thm_mainsec3}]
Let $\sigma(s_3,q_3)$ be a stability condition on $\Ku(X)_3$ as induced in Proposition \ref{prop_stabcondKu}(3). Consider a stability condition $\sigma(s_1,q_1)$ on $\Ku(X)_1$ as in Proposition \ref{prop_stabcondKu}(1) which is above the parabola $q-\frac{1}{2}s^2=0$. By Corollary \ref{cor_leftmutationstab} and Corollary \ref{cor_leftmutationstabafterU} there exists $\widetilde{g} \in \widetilde{\text{GL}}^+_2(\R)$ such that
$$\L_{\UU_X} \cdot \L_{\OO_X} \cdot \sigma(s_3,q_3)= \sigma(s_1,q_1) \cdot \widetilde{g}.$$
Note that if $F \in \Ku(X)_1$ is $\sigma_{s_1,q_1}$-semistable, then $F(H) \in \Ku(X)_3$ is $\sigma_{s_1+1,q_1'}$-semistable for $q_1'=\frac{1}{2}+s_1+q_1$ (see for instance \cite[Proof of Lemma 4]{LiZhao2}). Moreover, $(s_1+1,q_1')$ satisfies the conditions in Proposition \ref{prop_stabcondKu}(3). This implies 
$$\A(s_1,q_1)(H) \subset \langle \A(s_1+1,q_1'), \A(s_1+1,q_1')[1] \rangle.$$
Arguing as in Proposition \ref{prop_leftmutationstab}, it follows that 
\begin{equation}
\label{eq_actionofO(1)}
(- \otimes \OO_X(H)) \cdot \sigma(s_1,q_1)= \sigma(s_1+1,q_1') \cdot \widetilde{f}
\end{equation}
for $\widetilde{f} \in \widetilde{\text{GL}}^+_2(\R)$. Since the action by equivalences and by $\widetilde{\text{GL}}^+_2(\R)$ on the stability manifold commute, by \eqref{eq_defofSerrefunctor} this implies
$$S_{\Ku(X)_3}^{-1} \cdot \sigma(s_3,q_3)= \sigma(s_1+1,q_1') \cdot \widetilde{h}$$
for $\widetilde{h} \in \widetilde{\text{GL}}^+_2(\R)$. By Proposition \ref{prop_sameorbit} we have that $\sigma(s_3,q_3)$ and $\sigma(s_1+1,q_1')$ are in the same orbit with respect to the $\widetilde{\text{GL}}^+_2(\R)$-action, which implies the claim.
\end{proof}

As a consequence, we obtain the same result for the Serre functors of $\Ku(X)_2$ and $\Ku(X)_1$.
\begin{cor} \label{cor_finalresult}
For $i=1,2,3$, let $\sigma(s_i,q_i)$ be a stability condition on $\Ku(X)_i$ as induced in Proposition \ref{prop_stabcondKu}(i). Then there exists $\widetilde{g} \in \widetilde{\emph{GL}}^+_2(\R)$ such that $$S^{-1}_{\Ku(X)_i} \cdot \sigma(s_i,q_i)= \sigma(s_i,q_i) \cdot \widetilde{g}.$$
\end{cor}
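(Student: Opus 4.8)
The plan is to reduce the cases $i=1,2$ to the already-established case $i=3$ (Theorem \ref{thm_mainsec3}), exploiting that the three categories $\Ku(X)_1,\Ku(X)_2,\Ku(X)_3$ are identified by the mutation functors and that the Serre functor is natural under equivalences. Write $\Phi_2:=\L_{\OO_X}\colon \Ku(X)_3\to\Ku(X)_2$ and $\Phi_1:=\L_{\UU_X}\circ\L_{\OO_X}\colon\Ku(X)_3\to\Ku(X)_1$ for the relevant equivalences (see Section \ref{sec_staboverLibound}). For any exact equivalence $\Phi\colon\CC\to\DD$ of categories admitting Serre functors one has $S_{\DD}\cong\Phi\circ S_{\CC}\circ\Phi^{-1}$, hence
$$S^{-1}_{\Ku(X)_i}\cong\Phi_i\circ S^{-1}_{\Ku(X)_3}\circ\Phi_i^{-1},\qquad i=1,2.$$

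First I would record the elementary but crucial fact that the left action of autoequivalences on the stability manifold commutes with the right $\widetilde{\mathrm{GL}}^+_2(\R)$-action, so that $\Psi\cdot(\sigma\cdot\widetilde{g})=(\Psi\cdot\sigma)\cdot\widetilde{g}$ for every autoequivalence $\Psi$ and every $\widetilde{g}\in\widetilde{\mathrm{GL}}^+_2(\R)$. Next, fixing $\sigma(s_i,q_i)$ on $\Ku(X)_i$ and a stability condition $\sigma(s_3,q_3)$ on $\Ku(X)_3$ as in Proposition \ref{prop_stabcondKu}(3), I would invoke Corollary \ref{cor_leftmutationstab} (for $i=2$) and Corollaries \ref{cor_leftmutationstab} and \ref{cor_leftmutationstabafterU} together (for $i=1$) to produce $\widetilde{g}_i\in\widetilde{\mathrm{GL}}^+_2(\R)$ with $\Phi_i\cdot\sigma(s_3,q_3)=\sigma(s_i,q_i)\cdot\widetilde{g}_i^{-1}$, equivalently $\Phi_i^{-1}\cdot\sigma(s_i,q_i)=\sigma(s_3,q_3)\cdot\widetilde{g}_i$.

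The conclusion then follows by a purely formal chain. Using Theorem \ref{thm_mainsec3} to write $S^{-1}_{\Ku(X)_3}\cdot\sigma(s_3,q_3)=\sigma(s_3,q_3)\cdot\widetilde{h}$ for some $\widetilde{h}\in\widetilde{\mathrm{GL}}^+_2(\R)$, substituting the orbit relation $\Phi_i^{-1}\cdot\sigma(s_i,q_i)=\sigma(s_3,q_3)\cdot\widetilde{g}_i$, and applying the commutation of the two actions repeatedly, I compute
$$S^{-1}_{\Ku(X)_i}\cdot\sigma(s_i,q_i)=\Phi_i\cdot\bigl(S^{-1}_{\Ku(X)_3}\cdot\sigma(s_3,q_3)\bigr)\cdot\widetilde{g}_i=\bigl(\Phi_i\cdot\sigma(s_3,q_3)\bigr)\cdot\widetilde{h}\widetilde{g}_i=\sigma(s_i,q_i)\cdot\bigl(\widetilde{g}_i^{-1}\widetilde{h}\widetilde{g}_i\bigr),$$
so that $\widetilde{g}:=\widetilde{g}_i^{-1}\widetilde{h}\widetilde{g}_i$ witnesses the claim.

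Since Theorem \ref{thm_mainsec3} does all the analytic work, the argument here is essentially formal and I do not expect a serious obstacle. The only points demanding care are the bookkeeping of the conjugations $\widetilde{g}_i^{-1}\widetilde{h}\widetilde{g}_i$ and the verification that the mutation functors genuinely restrict to the equivalences $\Phi_i\colon\Ku(X)_3\to\Ku(X)_i$ compatibly with the lattice identifications \eqref{eq_d12}--\eqref{eq_c12} underlying Corollaries \ref{cor_leftmutationstab} and \ref{cor_leftmutationstabafterU}, so that the cited orbit relations may legitimately be transported through $\Phi_i^{-1}$.
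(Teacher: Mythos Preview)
Your proposal is correct and follows essentially the same strategy as the paper: reduce $i=1,2$ to the case $i=3$ via an equivalence $\Phi_i$ that carries induced stability conditions to induced stability conditions up to the $\widetilde{\mathrm{GL}}^+_2(\R)$-action, then use naturality of the Serre functor and the commutation of the two actions. The only cosmetic difference is your choice of $\Phi_1$: the paper uses the simpler equivalence $(-\otimes\OO_X(H))\colon\Ku(X)_1\to\Ku(X)_3$ together with \eqref{eq_actionofO(1)}, whereas you use $\L_{\UU_X}\circ\L_{\OO_X}\colon\Ku(X)_3\to\Ku(X)_1$ and invoke Corollaries \ref{cor_leftmutationstab} and \ref{cor_leftmutationstabafterU}; both choices work and the formal conjugation computation you wrote is exactly the argument the paper has in mind.
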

\begin{proof}
The case of $i=3$ is Theorem \ref{thm_mainsec3}. For $i=1$ it is enough to note that $(- \otimes \OO_X(H))$ induces an equivalence between $\Ku(X)_1$ and $\Ku(X)_3$. Using the fact that the Serre functors commute with equivalences and \eqref{eq_actionofO(1)}, we deduce the statement for $\Ku(X)_1$. If $i=2$, we apply the same argument since $\L_{\OO_X}$ induces an equivalence between $\Ku(X)_3$ and $\Ku(X)_2$ and using Corollary \ref{cor_leftmutationstab}. 
\end{proof}  

To complete the proof of Theorem \ref{thm_main} it remains to show that the Serre functor shifted by $-2$ acts as the identity on $\sigma(s,q)$. This is done in Corollary \ref{cor_stabonfourfold}.

\subsection{Other Fano threefolds of Picard rank $1$, index $1$}

It is natural to ask whether the above procedure applies to the Kuznetsov component of other Fano threefolds of Picard rank $1$ and index $1$. Recall that there are $10$ deformations types of these Fano threefolds, classified in terms of the genus $g$, which is the positive integer such that the degree $d:=H^3=2g-2$, corresponding to $2 \leq g \leq 12$, $g \neq 11$ \cite{IP}. If $X$ has even genus $g \geq 6$, then by \cite[Lemma 3.6]{Kuz}, \cite[Proposition and Definition 6.3]{BLMS} there is a semiorthogonal decomposition of the form
$$\D(X)= \langle \Ku(X), \EE_2, \OO_X \rangle.$$
Here $\EE_2$ is a vector bundle of rank $2$, obtained by restricting the tautological bundle on a suitable Grassmannian $\G(2, n)$.

Fano threefolds of genus $6$ are GM threefolds. When $X$ has genus $10$, then $\Ku(X) \simeq \D(C_2)$, where $C_2$ is a smooth curve of genus $2$, while if $X$ has genus $12$, then $\Ku(X) \simeq \D(Q_3)$, where $Q_3$ is the Kronecker quiver with three arrows. If $X$ has genus $8$, then $\Ku(X)$ is noncommutative, namely it is not equivalent to the derived category of a variety, and $\Ku(X)$ is equivalent to the Kuznetsov component of a cubic threefold. See \cite[Section 4]{Kuz}.

In all these cases, by \cite{BLMS} the construction reviewed in Section \ref{sec_stabcondonKu} allows to induce stability conditions on $\Ku(X)$ and Theorem \ref{thm_BLMSresult} holds for $\Ku(X)$ replacing $\UU_X$ with $\EE_2$ in the statement. We note the following facts:
\begin{enumerate}
\item We can define $\Ku(X)_i$ for $i=1,2,3$ as in \eqref{eq_Ku1}, \eqref{eq_Ku2}, \eqref{eq_Ku3} and explicit the Serre functor of $\Ku(X)_3$ as in \eqref{eq_defofSerrefunctor}. 
\item Li's stronger Bogomolov inequality holds for slope stable coherent sheaves on $X$ by \cite[Theorem 0.3]{Li_Fano3}, thus every pair $(s,q)$ in the region $R_{\frac{3}{2d}}$ defines a weak stability condition $\sigma_{s,q}$ on $\D(X)$.
\item Since $\ch_{\leq 2}(\EE_2)=(2, -H, \frac{g-4}{2d}H^2)$, the point $(-\frac{1}{2}, \frac{d-6}{8d})$ defined by $\EE_2$ belongs to the parabola $s^2-2q=\frac{3}{2d}$. In particular, $\EE_2$ is $\sigma_{s,q}$-stable for every $(s, q) \in R_{\frac{3}{2d}}$, and thus we can induce stability conditions on $\Ku(X)$ by restriction of a tilting of $\sigma_{s,q}$ for $(s,q)$ as in Proposition \ref{prop_stabcondKu}. 
\item By \cite{Kuz} the numerical Grothendieck group of $\Ku(X)$ has rank $2$ and a basis is given by
\begin{align*}
b_1&=1 -\frac{d+2}{4d}H^2 + \dots,\\
b_2&=H -\frac{3d-6}{4d}H^2+ \dots  
\end{align*}
It is not hard to check that the basis $Z(s,q)(b_1)$, $Z(s,q)(b_2)$ of $\C$ have the same orientation for every $(s,q)$ as in Proposition \ref{prop_stabcondKu}(1), using that $(s,q)$ is above the parabola $q= \frac{1}{2}s^2-\frac{3}{4d}$.
\end{enumerate}
Besides this, the argument explained in Section \ref{sec_proof} does not use anything else specific of working with a GM threefold. In fact, we have decided to consider GM threefolds, since this is the case we are interested for applications and in order to make a more readable proof. We obtain the following generalization of Theorem \ref{thm_main} and Corollary \ref{cor_finalresult}.

\begin{thm} 
\label{thm_fanoevengenus}
Let $X$ be a Fano threefold of Picard rank $1$, index $1$ and even genus $g \geq 6$. For $i=1,2,3$, let $\sigma(s_i,q_i)$ be a stability condition on $\Ku(X)_i$ as induced in Proposition \ref{prop_stabcondKu}(i). Then there exists $\widetilde{g} \in \widetilde{\emph{GL}}^+_2(\R)$ such that $$S^{-1}_{\Ku(X)_i} \cdot \sigma(s_i,q_i)= \sigma(s_i,q_i) \cdot \widetilde{g}.$$
\end{thm}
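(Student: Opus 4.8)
The plan is to reproduce the three-step argument of Section \ref{sec_proof} essentially verbatim, replacing the rank-two bundle $\UU_X$ by $\EE_2$ throughout, and to verify that each ingredient specific to GM threefolds has an exact analogue provided by the four properties listed above. By point (1), the inverse Serre functor of $\Ku(X)_3$ decomposes as
$$S_{\Ku(X)_3}^{-1}(-)= (-\otimes \OO_X(H)) \circ \L_{\EE_2} \circ \L_{\OO_X}[-3],$$
so it suffices to track a stability condition $\sigma(s_3,q_3)$ along the chain of equivalences $\Ku(X)_3 \xrightarrow{\L_{\OO_X}} \Ku(X)_2 \xrightarrow{\L_{\EE_2}} \Ku(X)_1 \xrightarrow{-\otimes\OO_X(H)} \Ku(X)_3$ and to check that, after each step, the resulting stability condition lies in the same $\widetilde{\text{GL}}^+_2(\R)$-orbit as the corresponding induced one.

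First I would set up the induced stability conditions. Point (2) gives, via \cite[Theorem 0.3]{Li_Fano3}, a weak stability condition $\sigma_{s,q}$ on $\D(X)$ for every $(s,q)$ in $R_{\frac{3}{2d}}$, the analogue of Proposition \ref{prop_Libound}. Point (3) ensures that $\EE_2$, together with the relevant line bundles, is $\sigma_{s,q}$-stable on the boundary, so the criterion of Proposition \ref{prop_criterion} applies and yields the analogue of Proposition \ref{prop_stabcondKu}: stability conditions $\sigma(s_i,q_i)$ on each $\Ku(X)_i$ for $(s_i,q_i)$ in the appropriate region and $\mu$ in the appropriate slope interval, with $\EE_2$ in place of $\UU_X$. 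Independence from $\mu$ (Lemma \ref{lem_notdependonmu}) and the single-orbit statement (Proposition \ref{prop_sameorbit}) then follow from point (4), which guarantees that the orientation of the basis $Z(s,q)(b_1), Z(s,q)(b_2)$ of $\C$ is constant on each region; these are verbatim copies of the GM arguments.

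The heart of the proof is the pair of mutation lemmas. I would prove the analogues of Lemma \ref{lemma_coh_epsilon} and Lemma \ref{lemma_coh2tilt}, establishing the containments
$$\L_{\OO_X}(\A(\alpha,\epsilon)) \subset \langle \A(\alpha',-\epsilon'), \A(\alpha',-\epsilon')[1]\rangle$$
for $\L_{\OO_X}$ and
$$\L_{\EE_2}(\A(-\tfrac{1}{2}+\epsilon,q)) \subset \langle \A(-\tfrac{1}{2}-\epsilon',q'), \A(-\tfrac{1}{2}-\epsilon',q')[1]\rangle$$
for $\L_{\EE_2}$. The three-step structure is unchanged: in Step 1, a slope-semistable sheaf $\GG$ destabilizing the relevant object must, by Li's inequality (point (2)) and a comparison of slopes in the $(s,q)$-plane, have the same reduced character as the boundary object ($\OO_X$, respectively $\EE_2$); in Step 2 one identifies $\GG$ with that boundary bundle; in Step 3 the Li--Zhao wall-crossing estimate \cite[Lemma 3]{LiZhao2} controls how the slopes of the $\sigma_{s',q'}$-semistable factors move under the deformation from $(s,q)$ to $(s',q')$, giving the containment. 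The analogues of Proposition \ref{prop_leftmutationstab} and Corollaries \ref{cor_leftmutationstab}, \ref{cor_leftmutationstabafterU} then follow from an explicit $\text{GL}^+_2(\R)$-computation on the bases of $\NN(\Ku(X)_i)$ induced from $b_1,b_2$ of point (4), exactly as in \eqref{eq_d12}--\eqref{eq_c12}, together with \cite[Lemma 8.11]{BMS}.

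The step I expect to be the main obstacle is Step 2 of the $\L_{\EE_2}$ lemma, namely showing $\GG\cong\EE_2$. For GM threefolds this used Li's rank bound $\rk(\GG)\in\{1,2\}$, the exclusion of rank $1$ via the numerical Grothendieck group, and an identification of $\GG$ with $\UU_X$ by Hom-vanishing and a vector-bundle criterion. In the general case the boundary point $(-\tfrac{1}{2},\tfrac{d-6}{8d})$ lies on $s^2-2q=\frac{3}{2d}$ by point (3), so $\EE_2$ is $\sigma_{s,q}$-stable and, being exceptional, is the unique $\sigma_{s,q}$-stable object in its numerical class; thus the obstacle reduces to checking that the rank-one exclusion and the Serre-duality Hom-vanishings transfer, which they do once $\EE_2$ plays the role of $\UU_X$. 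Finally, the twist by $\OO_X(H)$ sends $\sigma(s_1,q_1)$-stable objects to $\sigma(s_1+1,q_1')$-stable objects with $q_1'=\tfrac{1}{2}+s_1+q_1$, so $(-\otimes\OO_X(H))\cdot\sigma(s_1,q_1)=\sigma(s_1+1,q_1')\cdot\widetilde{f}$ as in \eqref{eq_actionofO(1)}; composing the three steps and invoking the single-orbit statement yields $S_{\Ku(X)_3}^{-1}\cdot\sigma(s_3,q_3)=\sigma(s_3,q_3)\cdot\widetilde{g}$, and the cases $i=1,2$ follow exactly as in Corollary \ref{cor_finalresult}, since $-\otimes\OO_X(H)$ and $\L_{\OO_X}$ are equivalences commuting with the Serre functor.
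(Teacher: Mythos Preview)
Your proposal is correct and follows precisely the approach the paper intends: the paper does not give a separate proof of Theorem \ref{thm_fanoevengenus}, but simply records the four properties (1)--(4) and states that ``the argument explained in Section \ref{sec_proof} does not use anything else specific of working with a GM threefold,'' so your detailed outline of how each step carries over with $\EE_2$ replacing $\UU_X$ is exactly what is implicitly claimed. Your identification of Step 2 of the $\L_{\EE_2}$ lemma as the one place requiring care is apt, and the ingredients you list (Li's rank bound via \cite[Proposition 3.2]{Li_Fano3}, exclusion of rank $1$ via the structure of $\NN(X)$ in \cite[Proposition 3.9]{Kuz}, and the vector-bundle criterion of \cite[Theorem 3.14]{Lo}) are the same ones the paper uses in the GM case.
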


\begin{rmk} \label{rmk_quest}
If $Y_d$ is a Fano threefold of Picard rank $1$, index $2$ and degree $d$, by \cite{Kuz} there is a semiorthogonal decomposition of the form
$$\D(X)= \langle \Ku(Y_d), \OO_{Y_d}, \OO_{Y_d}(1) \rangle$$
and by \cite{BLMS} there are stability conditions on $\Ku(Y_d)$, induced by restriction of a double tilting of slope stability on $\D(X)$. Denote by $\MM^i_d$ the moduli space of Fano threefolds of index $i$ and degree $d$ for $i=1,2$. By \cite[Theorem 3.8]{Kuz}, for $d=3, 4, 5$ there is a correspondence $\ZZ_d \subset \MM^2_d \times \MM^1_{4d+2}$, dominant over each factor, such that for every point $(Y_d, X_{4d+2}) \in \ZZ_d$, there is an equivalence 
$$\Phi_d \colon \Ku(Y_d) \simeq \Ku(X_{4d+2}).$$

Via $\Phi_d$, the stability conditions $\sigma(s,q)$ on $\Ku(X_{4d+2})$ define stability conditions on $\Ku(Y_d)$. By Theorem \ref{thm_fanoevengenus} and \cite[Theorem 3.2]{FP}, \cite[Theorem 4.25]{JLLZ} these stability conditions are in the same orbit with respect to the $\widetilde{\text{GL}}^+_2(\R)$-action of those constructed in \cite{BLMS} on $\Ku(Y_d)$. 

An interesting problem would be to understand whether there is a unique orbit of stability conditions on $\Ku(Y_d)$. This observation could be an evidence towards a positive answer to this question.
\end{rmk}

\begin{rmk}
In the odd genus cases we have the following semiorthogonal decompositions by \cite{Kuz_hyperplane}:
$$\D(X_{12})=\langle \Ku(X_{12}), \EE_5, \OO \rangle, \quad \D(X_{16})=\langle \Ku(X_{16}), \EE_3, \OO \rangle,$$
where $X_{12}$ has degree $12$, genus $7$ and $X_{16}$ has degree $16$, genus $9$. Here $\EE_5$ and $\EE_3$ are vector bundles of rank $5$ and $3$, respectively. The Chern characters of $\EE_5$ and $\EE_3$ do not define points on the parabola $s^2-2q=\frac{3}{2d}$ for $d=12$ and $d=16$, respectively. Thus in order to generalize the argument of Theorem \ref{thm_mainsec3} to these cases, one needs first to control the tilt stability of $\EE_5$ and $\EE_3$. On the other hand, in these cases the Kuznetsov component is equivalent to the bounded derived category of a curve of genus $\geq 1$. Thus by \cite{Macri} there is a unique orbit of stability conditions with respect to the $\widetilde{\text{GL}}^+_2(\R)$-action, and thus all the stability conditions are preserved by the Serre functor up to the $\widetilde{\text{GL}}^+_2(\R)$-action.
\end{rmk}

\section{Serre-invariant stability conditions} \label{sec_applications}
In this section, we drop the superfluous subscript and write $\Ku (X)= \Ku(X)_i$ for any given $i=1,2,3$ to simplify the notation, as the results contained herein hold for all such choices. We introduce the following definition (see \cite[Definition 3.1]{FP}).
 \begin{dfn} \label{def_serreinvariant}
 A stability condition $\sigma$ on $\Ku(X)$ is Serre-invariant, or $S_{\Ku(X)}$-invariant, if $S_{\Ku(X)} \cdot \sigma = \sigma \cdot \widetilde{g}$ for some $\widetilde{g} \in \widetilde{\text{GL}}^+_2(\R)$.
 \end{dfn}
\noindent In Theorem \ref{thm_mainsec3}, we have established that the stability conditions on $\Ku(X)$ as in Proposition 3.2 are $S_{\Ku(X)}$-invariant. We now aim to explore the implications that this fact has for the existence of Bridgeland stability conditions on special GM fourfolds (Corollary \ref{cor_stabonfourfold}) and to show that there is a unique orbit with respect to the $\widetilde{\text{GL}}^+(2, \R)$-action of $S_{\Ku(X)}$-invariant stability conditions.  

\subsection{Stability conditions on special GM fourfolds}
We begin by setting up some notation. Let $Y$ be a variety with a line bundle $\mathcal{O}_Y(1)$. We say that $\D(Y)$ admits a rectangular Lefschetz decomposition with respect to $\mathcal{O}_Y(1)$ if there is an admissible subcategory $\mathcal{B} \hookrightarrow \D(Y)$ such that 
\begin{align}
    \D(Y) = \langle \mathcal{B}, \mathcal{B}(1), \cdot \cdot \cdot, \mathcal{B}(m-1) \rangle
\end{align}
is a semiorthogonal decomposition for some integer $m$. Given such a decomposition of $\D(Y)$, pick $n,d \in \mathbb{N}$ such that $nd \leq m$. Suppose we have a degree-$n$ cyclic cover $f: X \to Y$ of $Y$ ramified in a Cartier divisor $Z$ in the linear system corresponding to $\mathcal{O}_Y (nd)$. If $i: Z \hookrightarrow Y$ is the inclusion, then the derived pullbacks $i^*$ and $f^*$ are fully faithful upon restriction to $\mathcal{B}$. We obtain semiorthogonal decompositions
\begin{align}
    \D(X) &= \langle \mathcal{A}_X, f^* \mathcal{B}, \cdot \cdot \cdot, f^* \mathcal{B}(m-(n-1)d-1) \rangle,  \\
    \D(Z) &= \langle \mathcal{A}_Z, i^* \mathcal{B}, \cdot \cdot \cdot, i^* \mathcal{B}(m-nd-1) \rangle, 
\end{align}
with $\mathcal{A}_X = \langle f^* \mathcal{B}, \cdot \cdot \cdot, f^* \mathcal{B}(m-(n-1)d-1) \rangle^\perp$ and $\mathcal{A}_Z$ defined similarly. The following theorem of Kuznetsov and Perry relates $\mathcal{A}_X$ and  $\mathcal{A}_Z$ in the above scenario. 
\begin{thm}[\cite{KP_cyclic}, Theorem 1.1] \label{thm_equivariantcat} In the setup above, there are fully faithful functors $\Phi_k :\mathcal{A}_Z \to \mathcal{A}_X^{\mu_n} $ for $0 \leq k \leq n-2$ such that there is a semiorthogonal decomposition:
\begin{align}
    \mathcal{A}_X^{\mu_n} = \langle \Phi_0 (\mathcal{A}_Z), \cdot \cdot \cdot, \Phi_{n-2} (\mathcal{A}_Z) \rangle.
\end{align}
\end{thm}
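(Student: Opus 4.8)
The plan is to give the equivariant category a geometric model and then reduce to a known decomposition of a root stack. Since $f\colon X\to Y$ is the degree-$n$ cyclic cover attached to the section $s\in H^0(Y,\mathcal{O}_Y(nd))$ cutting out $Z$, together with the chosen $n$-th root $\mathcal{O}_Y(d)$ of $\mathcal{O}_Y(nd)$, the quotient stack $[X/\mu_n]$ is isomorphic to the $n$-th root stack $\mathcal{Y}:=\sqrt[n]{(Y,Z)}$; as $Z$ is smooth both are smooth, and one has $\mathcal{A}_X^{\mu_n}\subset \D(X)^{\mu_n}\simeq \D([X/\mu_n])\simeq \D(\mathcal{Y})$. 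First I would transport the whole problem to $\D(\mathcal{Y})$ under this equivalence.

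Next I would invoke the root-stack semiorthogonal decomposition. Writing $\pi\colon\mathcal{Y}\to Y$ for the projection, $\iota\colon\mathcal{Z}\hookrightarrow\mathcal{Y}$ for the $\mu_n$-gerbe over $Z$ with $p\colon\mathcal{Z}\to Z$, and $\mathcal{O}_{\mathcal{Y}}(1/n)$ for the tautological root, the general root-stack formula gives
$$\D(\mathcal{Y})=\langle \pi^*\D(Y),\ \Psi_1\D(Z),\ \ldots,\ \Psi_{n-1}\D(Z)\rangle,\qquad \Psi_j(-)=\iota_*\bigl(p^*(-)\otimes\mathcal{O}_{\mathcal{Y}}(j/n)\bigr).$$
Substituting the rectangular Lefschetz decomposition $\D(Y)=\langle\mathcal{B},\ldots,\mathcal{B}(m-1)\rangle$ into the first block and $\D(Z)=\langle\mathcal{A}_Z,i^*\mathcal{B},\ldots,i^*\mathcal{B}(m-nd-1)\rangle$ into each of the remaining $n-1$ blocks yields a long semiorthogonal decomposition of $\D(\mathcal{Y})$ whose pieces are either twists of $\mathcal{B}$ (pulled to the stack) or the $n-1$ copies $\Psi_j(\mathcal{A}_Z)$. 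A reassuring consistency check is that the total number of $\mathcal{B}$-blocks equals $m+(n-1)(m-nd)=nm-n(n-1)d$, which is exactly $n\cdot(m-(n-1)d)$, the number of blocks in the equivariant refinement of $\langle f^*\mathcal{B},\ldots,f^*\mathcal{B}(m-(n-1)d-1)\rangle$ obtained by twisting each $f^*\mathcal{B}(i)$ by the $n$ characters of $\mu_n$; and the number of $\mathcal{A}_Z$-blocks is $n-1$, matching $\Phi_0,\ldots,\Phi_{n-2}$.

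The heart of the proof is then a mutation and bookkeeping step. I would reorganize the long decomposition so that all the $\mathcal{B}$-blocks assemble into the admissible subcategory generated by the (character-twisted) pullbacks $f^*\mathcal{B}(i)$, whose right orthogonal in $\D(X)^{\mu_n}$ is by definition $\mathcal{A}_X^{\mu_n}$, while the surviving blocks become the images of $\mathcal{A}_Z$. One then sets $\Phi_k$ to be $\Psi_{k+1}$ followed by the projection $\D(\mathcal{Y})\to\mathcal{A}_X^{\mu_n}$; fully faithfulness of $\Phi_k$ follows from that of $\Psi_{k+1}$ once semiorthogonality guarantees the projection is an isomorphism on the relevant $\Hom$-spaces, and the decomposition $\mathcal{A}_X^{\mu_n}=\langle\Phi_0(\mathcal{A}_Z),\ldots,\Phi_{n-2}(\mathcal{A}_Z)\rangle$ drops out after the $\mathcal{B}$-blocks have been mutated away.

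I expect this last step to be the main obstacle: one must show that the twists $\mathcal{O}_{\mathcal{Y}}(j/n)$ on the $Z$-blocks and the twists $\mathcal{B}(i)$ on the $Y$-block interleave compatibly, and that the necessary mutations preserve semiorthogonality and carry each $i^*\mathcal{B}(\bullet)$-block into an $f^*\mathcal{B}(\bullet)$-block. An alternative, stack-free route is to define $\Phi_k$ directly from the totally ramified divisor $R\subset X$ (for which $f|_R\colon R\xrightarrow{\sim}Z$) by $\Phi_k(-)\cong \mathrm{pr}\circ j_*\bigl((f|_R)^*(-)\otimes\chi^k\bigr)$, and to verify fully faithfulness, semiorthogonality, and generation by explicit $\Hom$-computations using the decomposition of $\D(X)$; but the required vanishings reduce to the same interleaving of twists, so the essential difficulty is unchanged.
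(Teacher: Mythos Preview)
The paper does not prove this statement: Theorem~\ref{thm_equivariantcat} is quoted verbatim from \cite{KP_cyclic} (their Theorem~1.1) and used as a black box in the application to special GM fourfolds. There is therefore no proof in the present paper to compare your proposal against.

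That said, your root-stack strategy is a legitimate route to the result, and it differs from Kuznetsov and Perry's own argument in \cite{KP_cyclic}. They work directly in the equivariant category $\D(X)^{\mu_n}$: they build the functors $\Phi_k$ explicitly via pushforward from the ramification divisor twisted by characters, and then verify full faithfulness, semiorthogonality, and generation by hand using the projection formula, the structure of $f_*\OO_X$ as a $\mu_n$-representation, and the Lefschetz decomposition of $\D(Y)$. Your approach instead passes through the identification $[X/\mu_n]\simeq\sqrt[n]{(Y,Z)}$ and invokes the Ishii--Ueda/Bergh--Lunts--Schn\"urer decomposition of the root stack, reducing the problem to a bookkeeping exercise with mutations. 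This is more conceptual and explains \emph{why} exactly $n-1$ copies of $\mathcal{A}_Z$ appear, at the cost of importing the root-stack machinery; Kuznetsov--Perry's proof is more self-contained but requires more explicit $\Hom$-computations. The interleaving of twists you flag as the main obstacle is real in either approach and is where the hypothesis $nd\le m$ gets used; your alternative ``stack-free'' description of $\Phi_k$ at the end is essentially what Kuznetsov and Perry do.
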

\noindent Here, $\mu_n$ is the group of $n^{th}$ roots of unity, acting on $X$ via automorphisms over $Y$ and $\mathcal{A}_X^{\mu_n}$ is the corresponding equivariant category.

If we now assume that $X$ is a special GM fourfold, then the map $X \to \textrm{Gr}(2,5)$ is a double cover of its image $Y$, ramified over an ordinary GM threefold $Z \hookrightarrow Y$. In the notation of \cite{KP_cyclic}, we have $n=2$, $d=1$, $e=2$ and $\AA_X=\Ku(X)$, $\AA_Z=\Ku(Z)$ are the Kuznetsov components of the GM fourfold and threefold.  By Theorem \ref{thm_equivariantcat}, the map $\Phi_0$ provides an equivalence of categories $\textrm{Ku}(Z) \cong \textrm{Ku}(X)^{\mu_2}$. As shown in \cite[Corollary 1.3, Proposition 7.10]{KP_cyclic}, which makes use of \cite{E}, we have dually an equivalence
\begin{equation}
\label{eq_equivalence}
\textrm{Ku}(Z)^{\mathbb{Z}/2\mathbb{Z}} \cong \textrm{Ku}(X).    
\end{equation}
The action of $\Z /2\Z$ on $\Ku(Z)$ is induced by the rotation functor $\L_{i^*\BB}(- \otimes \OO_X(H))[-1]$, where $i^*\BB= ^{\perp}\Ku(Z)$. Using this equivalence and Theorem \ref{thm_mainsec3}, we have the following result.

\begin{cor} \label{cor_stabonfourfold}
Let $X$ be a special GM fourfold and $Z$ be its associated ordinary GM threefold. The stability conditions $\sigma(s,q)$ on $\Ku(Z)$ defined in Proposition \ref{prop_stabcondKu} induce stability conditions on the equivariant category $\Ku(Z)^{\mathbb{Z}/2\mathbb{Z}}$. In particular, they define stability conditions on $\Ku(X)$.
\end{cor}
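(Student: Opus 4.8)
The plan is to prove the Corollary in two movements: first upgrade the Serre-invariance of $\sigma(s,q)$ (which holds only up to the $\widetilde{\mathrm{GL}}^+_2(\R)$-action) to an honest fixed-point statement for the generator of the $\mathbb{Z}/2\mathbb{Z}$-action, and then feed this into the standard machinery that induces stability conditions on equivariant categories.

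For the first movement, write $\rho := S_{\Ku(Z)}[-2]$ for the generator of the $\mathbb{Z}/2\mathbb{Z}$-action, which by the setup of \cite{KP_cyclic} is exactly the functor realizing the equivalence \eqref{eq_equivalence}. By Theorem \ref{thm_mainsec3} and Corollary \ref{cor_finalresult} the stability condition $\sigma(s,q)$ is Serre-invariant, and since the shift $[-2]$ commutes with both the $\mathrm{Aut}$- and the $\widetilde{\mathrm{GL}}^+_2(\R)$-actions this gives $\rho \cdot \sigma(s,q) = \sigma(s,q) \cdot \widetilde{k}$ for some $\widetilde{k} \in \widetilde{\mathrm{GL}}^+_2(\R)$. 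Recall from \cite{KP} that, $Z$ being odd-dimensional, $S_{\Ku(Z)} \cong \rho[2]$ with $\rho$ an \emph{involutive} autoequivalence, so $\rho^2 \cong \id$. Applying $\rho$ twice and using that the two actions commute,
\[
\sigma(s,q) = \rho^2 \cdot \sigma(s,q) = \rho \cdot \big( \sigma(s,q) \cdot \widetilde{k} \big) = \big( \rho \cdot \sigma(s,q) \big) \cdot \widetilde{k} = \sigma(s,q) \cdot \widetilde{k}^2 .
\]
Since the $\widetilde{\mathrm{GL}}^+_2(\R)$-action is free (the central charge $Z(s,q)$ has two-dimensional image, as already used in the orientation computations of Proposition \ref{prop_sameorbit}), we get $\widetilde{k}^2 = \id$; as $\widetilde{\mathrm{GL}}^+_2(\R)$ is torsion-free, this forces $\widetilde{k} = \id$. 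Hence $\rho \cdot \sigma(s,q) = \sigma(s,q)$, i.e.\ $S_{\Ku(Z)}[-2] \cdot \sigma(s,q) = \sigma(s,q)$, which incidentally also completes the proof of Theorem \ref{thm_main}.

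For the second movement, write $\sigma(s,q) = (\A(s,q), Z(s,q))$. The equality $\rho \cdot \sigma(s,q) = \sigma(s,q)$ says precisely that $\rho$ preserves the heart, $\rho(\A(s,q)) = \A(s,q)$, and that the central charge is invariant, $Z(s,q) \circ \rho_* = Z(s,q)$ on $\NN(\Ku(Z))$. These are exactly the hypotheses needed to induce a stability condition on the equivariant category by the standard construction (see \cite{MMS}): one sets $\A(s,q)^{\mathbb{Z}/2\mathbb{Z}} := \Forg^{-1}(\A(s,q))$, the full subcategory of $\mathbb{Z}/2\mathbb{Z}$-equivariant objects whose underlying object lies in $\A(s,q)$, and takes the central charge to be $Z(s,q) \circ \Forg$. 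One then checks that this pair is a heart of a bounded t-structure carrying a stability function, and that the support property descends from the one for $\sigma(s,q)$ via the induced quadratic form on the finite-rank lattice attached to $\Ku(X)$. Transporting the resulting stability condition along the equivalence $\Ku(Z)^{\mathbb{Z}/2\mathbb{Z}} \simeq \Ku(X)$ of \eqref{eq_equivalence} yields the desired Bridgeland stability condition on the Kuznetsov component of the special GM fourfold $X$.

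The main obstacle is the first movement: passing from invariance \emph{up to} $\widetilde{\mathrm{GL}}^+_2(\R)$ to invariance \emph{on the nose}. This is the crux because the equivariant construction genuinely requires $\rho$ to fix the heart and the central charge exactly, not merely the orbit. The torsion-freeness of $\widetilde{\mathrm{GL}}^+_2(\R)$ together with the involutivity $\rho^2 \cong \id$ resolves this cleanly. The remaining point — descent of the support property to the equivariant lattice — is routine but should be recorded explicitly (as done in Remark \ref{rmk_stabonfourfold}).
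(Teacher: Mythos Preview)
Your proof is correct and follows essentially the same two-step approach as the paper: first upgrade Serre-invariance up to $\widetilde{\mathrm{GL}}^+_2(\R)$ to exact invariance using the involutivity $\rho^2\cong\id$, then apply \cite[Lemma 2.16]{MMS} and transport along \eqref{eq_equivalence}.

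The one point of difference is how you conclude $\widetilde{k}=\id$ from $\widetilde{k}^2=\id$. The paper argues by hand: it shows $g$ is an increasing involution of $\R$ (hence the identity), then shows $M^2=I$ with $\det M>0$ and $M$ inducing the identity on the circle, forcing $M=I$. Your route---freeness of the $\widetilde{\mathrm{GL}}^+_2(\R)$-action on stability conditions whose central charge has two-dimensional image, combined with torsion-freeness of $\widetilde{\mathrm{GL}}^+_2(\R)$---is cleaner and more conceptual, and it is a standard fact (torsion in $\mathrm{GL}^+_2(\R)$ is conjugate into $\mathrm{SO}(2)$, whose preimage in the universal cover is $\R$). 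Either argument works; yours has the advantage of making the mechanism transparent and reusable.
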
 
\begin{proof}
It is sufficient to prove that $S_{\textrm{Ku}(Z)}[-2] \cdot \sigma (s,q) = \sigma (s,q)$, since the $\mathbb{Z}/2 \mathbb{Z}$-action on $\textrm{Ku}(Z)$ is induced by $\L_{i^*\BB}(- \otimes \OO_X(H))[-1]=S^{-1}_{\Ku(Z)}[2]$, or equivalently by $S_{\textrm{Ku}(Z)}[-2]$. By Theorem \ref{thm_mainsec3}, there is some $\widetilde{g}= (M,g) \in  \widetilde{\mathrm{GL}}^+_2(\R)$ such that $S_{\textrm{Ku}(Z)}[-2] \cdot  \sigma(s,q) = \sigma(s,q) \cdot \widetilde{g}$. Applying the involution $S_{\textrm{Ku}(Z)}[-2]$ to both sides of this equality yields: $\sigma (s,q) = \sigma(s,q) \cdot \widetilde{g}^2$. Writing $\sigma (s,q) = (\mathcal{P},Z)$, at the level of slicings this gives $\mathcal{P}(\phi) = \mathcal{P}(g^2 (\phi))$ for any $\phi \in \mathbb{R}$, hence $g: \mathbb{R} \to \mathbb{R}$ is an increasing involution, so we must have $g= \textrm{id}$. On the other hand, on central charges we have $M^{-2} \circ Z = Z$. The image of $Z$ is not contained in a line, hence $M^{-2}$ agrees with the identity on two linearly independent vectors in $\mathbb{C} \cong \mathbb{R}^2$, thus $M^2 = \textrm{id}$. There are only three conjugacy classes of $2 \times 2$ matrices over $\mathbb{R}$ squaring to the identity, one of which has negative determinant, hence $M = \pm I$. We cannot have $M= -I$, since $M$ induces the identity on the circle, thus $M=I$ and we deduce that $S_{\textrm{Ku}(Z)}[-2] \cdot \sigma (s,q) = \sigma (s,q)$ as claimed. 

As a consequence, if $\text{Forg} \colon \Ku(Z)^{\mathbb{Z}/2\mathbb{Z}} \to \Ku(Z)$ denotes the forgetful functor, then by \cite[Lemma 2.16]{MMS} we have that $\Forg^{-1} \cdot \sigma(s,q)$ defines a stability condition on $\Ku(Z)^{\mathbb{Z}/2\mathbb{Z}}$. Composing with the equivalence in \eqref{eq_equivalence} we obtain stability conditions on $\Ku(X)$.
\end{proof}

\begin{rmk} \label{rmk_stabonfourfold}
Note that the above proof does not use anything specific on the stability conditions $\sigma(s,q)$. In particular, Corollary \ref{cor_stabonfourfold} holds more generally for every Serre-invariant stability conditions on $\Ku(Z)$.  
\end{rmk}
 
\subsection{Uniqueness}

Let $X$ be a GM threefold. The aim of this section is to prove the following result.

\begin{cor} \label{cor_uniqueness}
If $\sigma_1$, $\sigma_2$ are $S_{\Ku(X)}$-invariant stability conditions, then there exists $\widetilde{g} \in \widetilde{\emph{GL}}^+_2(\R)$ such that $\sigma_2= \sigma_1 \cdot \widetilde{g}$.
\end{cor}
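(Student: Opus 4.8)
The plan is to obtain the statement as a direct application of the uniqueness criterion \cite[Theorem 1.1]{FP}, which asserts that on a fractional Calabi--Yau category of dimension $\leq 2$ whose numerical Grothendieck group has rank $2$, any two Serre-invariant stability conditions lie in the same $\widetilde{\mathrm{GL}}^+_2(\R)$-orbit. Since the present statement is exactly of this conditional form, the whole content of the proof is to verify that $\Ku(X)$ meets these two structural hypotheses, after which the conclusion follows by applying the criterion to $\sigma_1$ and $\sigma_2$.

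First I would record that $\Ku(X)$ is a fractional Calabi--Yau category of dimension $2$. Indeed, by \cite[Proposition 2.6]{KP} (reviewed after \eqref{eq_sod}), since $X$ is odd-dimensional the Serre functor has the form $S_{\Ku(X)} \cong \sigma[2]$ for a nontrivial involutive autoequivalence $\sigma$; hence $S_{\Ku(X)}^2 \cong \sigma^2[4] \cong [4]$, so that $\Ku(X)$ satisfies $S_{\Ku(X)}^2 \cong [4]$ and has fractional Calabi--Yau dimension $4/2 = 2$. This is the noncommutative Enriques surface situation, in which $S_{\Ku(X)} \neq [2]$ but $S_{\Ku(X)}^2 = [4]$. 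Next I would note that $\NN(\Ku(X))$ is free of rank $2$, by \eqref{eq_basis} (equivalently \cite[Proposition 3.9]{Kuz}); since $\Ku(X)_1$, $\Ku(X)_2$, $\Ku(X)_3$ are mutually equivalent, this holds for any of the three models, so I may work with whichever $i$ is convenient.

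Applying \cite[Theorem 1.1]{FP} to $\sigma_1$ and $\sigma_2$ then produces $\widetilde{g} \in \widetilde{\mathrm{GL}}^+_2(\R)$ with $\sigma_2 = \sigma_1 \cdot \widetilde{g}$, which is the assertion. I expect the only delicate point to be confirming that the numerical data of $\Ku(X)$ genuinely match the hypotheses of the criterion: that the fractional Calabi--Yau dimension is exactly $\leq 2$ (as established above) and that the induced action $(S_{\Ku(X)})_*$ on $\NN(\Ku(X))$ is as required. For the latter I would invoke the computation in the proof of Corollary \ref{cor_stabonfourfold}, where $S_{\Ku(X)}[-2]$ was shown to act as the identity on central charges; by injectivity of $Z$ on $\NN(\Ku(X)) \otimes \R$ this forces $(S_{\Ku(X)}[-2])_* = \mathrm{id}$, and hence $(S_{\Ku(X)})_* = \mathrm{id}$ on $\NN(\Ku(X))$. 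Finally I would remark that this orbit is non-empty by Theorem \ref{thm_main}, so that the uniqueness statement is not vacuous.
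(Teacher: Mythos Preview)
Your proposal has a genuine gap: you have misstated the hypotheses of the criterion from \cite{FP}. The criterion (recorded here as Theorem~\ref{criterion}, i.e.\ \cite[Theorem 3.2, Lemma 3.6]{FP}) does \emph{not} say that any fractional Calabi--Yau category of dimension $\leq 2$ with rank-$2$ numerical Grothendieck group has a unique $\widetilde{\mathrm{GL}}^+_2(\R)$-orbit of Serre-invariant stability conditions. Beyond $S_{\TT}^2=[4]$ and $\mathrm{rk}\,\NN(\TT)=2$, it requires two further ingredients: (1) the negativity condition $\ell_{\TT}:=\max\{\chi(v,v):0\neq v\in\NN(\TT)\}<0$; and (2) the existence of three test objects $Q_1,Q_2,Q_2'$ whose $\hom^1$ are controlled in terms of $\ell_{\TT}$ and which satisfy specific (non)vanishing of $\Hom$-spaces among them. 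Neither of these is verified in your argument, and neither is automatic.

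The paper's proof carries out exactly these checks. The Euler form on $\NN(\Ku(X))$ is written down and shown to be negative definite, giving $\ell_{\Ku(X)}=-1$. The objects $Q_i$ are then constructed geometrically: $Q_1=\L_{\UU_X}(\II_C)$ for a generic smooth conic $C$, and $Q_2,Q_2'$ are ideal sheaves of suitable twisted cubics (one containing $C$ as a component, one disjoint from $C$). A separate lemma computes the required $\hom^i$ between them. This geometric input is the substantive content of the proof and is entirely absent from your proposal. Your side remarks on $(S_{\Ku(X)})_*=\mathrm{id}$ and non-emptiness of the orbit are correct but do not address the missing hypotheses.
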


Corollary \ref{cor_uniqueness} has been recently proved in \cite[Lemmas 4.22, 4.23, 4.24]{JLLZ}. Here we give an alternative proof making use of the following result obtained from \cite{FP}.

\begin{thm}[\cite{FP}, Theorem 3.2, Lemma 3.6] \label{criterion}
Let $\TT$ be a $\C$-linear triangulated category of finite type whose Serre functor satisfies $S_{\TT}^2=[4]$ and whose numerical Grothendieck group $\NN(\TT)$ has rank $2$. Assume further the following conditions hold:
\begin{enumerate}
\item $\ell_{\TT} := \max\{\chi(v, v) \colon 0 \neq v \in \mathcal{N}(\TT) \} < 0$.
\item 
There are three objects $Q_1, Q_2, Q_2' \in \TT$ such that $Q_2$ and $Q_2'$ have the same class in $\mathcal{N}(\TT)$, $Q_1$ is not isomorphic to $Q_2$, or $Q_2'[1]$, and
\begin{align*}
&-\ell_{\TT} +1 \leq  \hom^1(Q_i, Q_i),\, \hom^1(Q_2', Q_2') <  -2\ell_{\TT} +2,\\
&\hom(Q_2, Q_1) \neq 0\\
&\hom(Q_1, Q_2'[1]) \neq 0\\
&\hom(Q_2', Q_2[3]) = 0.
\end{align*} 
\end{enumerate}
Then there exists a unique orbit of $S_{\TT}$-invariant stability conditions on $\TT$ with respect to the $\widetilde{\emph{GL}}^+_2(\R)$-action.
\end{thm}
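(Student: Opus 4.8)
I will treat the statement as asserting both existence and uniqueness of the orbit; since in every application the existence of a Serre-invariant stability condition is known independently, I would focus on uniqueness, i.e.\ on proving that any two $S_{\TT}$-invariant stability conditions $\sigma_1=(\PP_1,Z_1)$ and $\sigma_2=(\PP_2,Z_2)$ lie in a single $\widetilde{\mathrm{GL}}^+_2(\R)$-orbit. The first and most important step is to pin down how $S_{\TT}$ acts on phases. For any $S_{\TT}$-invariant $\sigma=(\PP,Z)$, write $S_{\TT}\cdot\sigma=\sigma\cdot(g,M)$. Applying this twice and using that $S_{\TT}^2=[4]$ acts as $(\phi\mapsto\phi+4,\,I)$ gives $(g\circ g,\,M^2)=(\phi\mapsto\phi+4,\,I)$, so $M^2=I$ and $g\circ g=\phi+4$. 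Since $M\in\mathrm{GL}^+_2(\R)$ has positive determinant, $M=\pm I$, exactly as in the proof of Corollary \ref{cor_stabonfourfold}. Setting $h=g-2$ and using $g(\phi+1)=g(\phi)+1$, one checks $h$ is an increasing involution of $\R$, hence $h=\id$; the case $M=-I$ is incompatible with the resulting even phase shift negating $Z$, so $M=I$ and $g(\phi)=\phi+2$. Thus $S_{\TT}$ sends $\PP(\phi)$ to $\PP(\phi+2)$, and writing $S_{\TT}=\rho[2]$ we get $\phi(\rho A)=\phi(A)$ for every $\sigma$-semistable $A$.

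Next I would show that $Q_1,Q_2,Q_2'$ are $\sigma$-stable for every $S_{\TT}$-invariant $\sigma$, which is the role of condition (2). For $\sigma$-semistable $A$, Serre duality together with $\phi(\rho A)=\phi(A)$ gives $\hom^3(A,A)=\dim\Hom(A,\rho A[-1])=0$, whence $\hom^1(A,A)=\hom^0(A,A)+\hom^2(A,A)-\chi(A,A)\ge 1-\chi(A,A)\ge -\ell_{\TT}+1$ by condition (1). The lower bound in condition (2) thus says each $Q_i$ realizes the extremal value $\chi([Q_i],[Q_i])=\ell_{\TT}$ with minimal self-extensions. If some $Q_i$ were strictly semistable, an Euler-characteristic computation over its Jordan--Hölder factors (all of the same phase, using again $\hom^3=0$ and $\hom^0\ge 1$ on each factor) forces $\hom^1(Q_i,Q_i)\ge -2\ell_{\TT}+2$, contradicting the upper bound in condition (2); hence each $Q_i$ is stable.

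Finally I would compare $\sigma_1$ and $\sigma_2$. Using condition (1), the negative-definiteness of $\chi$ forces both central charges $Z_1,Z_2\colon\NN(\TT)\otimes\R\to\C$ to be orientation-preserving isomorphisms, so they differ by an element of $\mathrm{GL}^+_2(\R)$; replacing $\sigma_2$ by a $\widetilde{\mathrm{GL}}^+_2(\R)$-translate, which is again $S_{\TT}$-invariant, I may assume $Z_1=Z_2=Z$. Then $Z(Q_1)$ and $Z(Q_2)=Z(Q_2')$ are fixed, so the phases $\phi_j(Q_i)$ are determined modulo $2$. Reading the nonvanishing $\Hom(Q_2,Q_1)\ne 0$ and $\Hom(Q_1,Q_2'[1])\ne 0$ through the stability of the $Q_i$ (a nonzero map between stable objects orders their phases) gives $\phi(Q_2)\le\phi(Q_1)\le\phi(Q_2)+1$, while $\Hom(Q_2',Q_2[3])=0$ forces $\phi(Q_2')=\phi(Q_2)$. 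These constraints have total width less than the period $2$, hence admit a unique lift, so $\phi_1(Q_i)=\phi_2(Q_i)$ for $i=1,2$. Since $[Q_1],[Q_2]$ span $\NN(\TT)\otimes\R$ and the two stability conditions have equal central charge and agree on the phases of stable objects in these spanning classes, their hearts are tilts of one another at width zero and $\sigma_1=\sigma_2$ by the rigidity of \cite[Lemma 8.11]{BMS}; thus $\sigma_1$ and $\sigma_2$ lie in one orbit.

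The main obstacle is the first step: showing that Serre-invariance together with $S_{\TT}^2=[4]$ forces the phase shift to be exactly $+2$ (equivalently $M=I$), because everything downstream — the vanishing $\hom^3(A,A)=0$, the stability of the $Q_i$, and the phase ordering — rests on $S_{\TT}$ acting on phases by translation. The remaining difficulty is bookkeeping: verifying that the numerical window for $\hom^1(Q_i,Q_i)$ in condition (2) is calibrated precisely so that a nontrivial Jordan--Hölder decomposition is numerically excluded, and checking that the $\Hom$-(non)vanishing conditions confine $\phi(Q_1)-\phi(Q_2)$ to a range short enough to be pinned down modulo the $S_{\TT}^2=[4]$ periodicity.
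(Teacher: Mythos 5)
A preliminary remark: the paper does not prove this statement at all --- it is imported wholesale from \cite[Theorem 3.2, Lemma 3.6]{FP} and merely applied in Corollary \ref{cor_uniqueness} --- so your proposal has to be judged against what a complete proof requires. Your skeleton does match the strategy of \cite{FP}: identifying the phase action of $S_{\TT}$ as $\phi\mapsto\phi+2$, the vanishing $\hom^3(A,A)=\hom(A,\rho A[-1])^*=0$ for semistable $A$, the bound $\hom^1(A,A)\geq 1-\ell_{\TT}$, and the numerical exclusion of extra stable factors are all correct in substance. But there are two genuine gaps exactly where the theorem has content. First, your assertion that ``the negative-definiteness of $\chi$ forces both central charges to be orientation-preserving isomorphisms'' is unsupported: condition (1) is a property of the lattice and implies nothing about $\ker Z_j$ or about orientations; a priori a Serre-invariant stability condition could even have central charge with image contained in a line, which would moreover break your Step 1, since the $\widetilde{\mathrm{GL}}^+_2(\R)$-action is free only when the central charge is nondegenerate. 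Injectivity and orientation must instead be extracted from condition (2): once $Q_1,Q_2$ are known to be stable with $0<\phi(Q_1)-\phi(Q_2)<1$, the vectors $Z_j(Q_2)$, $Z_j(Q_1)$ are linearly independent and positively oriented; this is also the only way to obtain the linear independence of $[Q_1],[Q_2]$ in $\NN(\TT)$, which you use (``$[Q_1],[Q_2]$ span'') but never prove. Second, and more seriously, your final sentence --- that the hearts ``are tilts of one another at width zero'' so $\sigma_1=\sigma_2$ by \cite[Lemma 8.11]{BMS} --- is a non sequitur. That lemma takes as \emph{input} a tilting relation $\AA_2\subset\langle\AA_1,\AA_1[1]\rangle$, a statement about every object of the hearts; equality of the central charges together with equality of the phases of two stable objects does not yield it. Comparing the hearts globally (for instance by sandwiching the $\sigma_1$-HN phases of an arbitrary $\sigma_2$-stable object via Hom's with the $Q_i$, Serre duality, and the homological-dimension bound on the hearts) is the real work of the uniqueness proof, and it is entirely absent from your proposal.

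There are also smaller leaks to patch. You exclude the case that some $Q_i$ is strictly semistable but never the case that it is unstable; for HN factors the Mukai-type inequality is actually easier, since the required vanishings $\Hom(A,B)=0$ and $\Hom(B,A[2])=\Hom(A,\rho B)^*=0$ between factors follow from the phase ordering, whereas for Jordan--H\"older factors of equal phase these vanishings can fail (e.g.\ with repeated factors), so your ``Euler-characteristic computation'' needs genuine care there. Finally, $\hom(Q_2',Q_2[3])=0$ alone does not force $\phi(Q_2')=\phi(Q_2)$: since $[Q_2]=[Q_2']$ one only knows $\phi(Q_2')-\phi(Q_2)\in 2\Z$, and one must first apply Serre duality to the two non-vanishing conditions to confine this difference to $\{0,2\}$, and then rule out the value $2$ by noting that in that case stability and Serre duality give $\chi(Q_2',Q_2)=\hom^2-\hom^3+\hom^4$, so the assumed vanishing of $\hom^3(Q_2',Q_2)$ would force $\chi(Q_2',Q_2)\geq 0$, contradicting $\chi(Q_2',Q_2)=\chi([Q_2],[Q_2])\leq\ell_{\TT}<0$.
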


Let us check the conditions of Theorem \ref{criterion} for the Kuznetsov component $\Ku(X):=\langle \UU_X, \OO_X \rangle^\perp$ of a GM threefold $X$. We have already recalled in Section \ref{sec_GM} that $S_{\Ku(X)}^2=[4]$ and $\NN(\Ku(X))$ has rank $2$. By \cite{Kuz} the basis $b_1$, $b_2$ of \eqref{eq_basis} has intersection form
$$\begin{pmatrix}
-2 & -3 \\
-3 & -5
\end{pmatrix}.$$
For $v= \alpha b_1+\beta b_2 \in \NN(\Ku(X))$, we have 
$$v^2=-2\alpha^2 - 6\alpha \beta - 5\beta^2=-(\alpha+2\beta)^2-(\alpha+\beta)^2 \leq -1,$$
so $\ell_{\Ku(X)}=-1$. To find the suitable objects $Q_i$, we argue similarly as in \cite[Lemma 4.26]{JLLZ}, just using conics instead of lines. Let $C \subset X$ be a smooth conic. Its ideal sheaf $\II_C$ is in $\langle \OO_X \rangle^\perp$ and the left mutation $\L_{\UU_X}(\II_C)$ is in $\Ku(X)$ by definition, sitting in the triangle
\begin{equation} \label{eq_mutconic}
\UU_X \to \II_C \to \L_{\UU_X}(\II_C). 
\end{equation}
The latter can be computed using the short exact sequence
$$0 \to \II_C \to \OO_X \to \OO_C \to 0$$ 
and $h^0(\UU_X^\vee|_C)=4$, $h^i(\UU_X^\vee|_C)=0$ for $ i \neq 0$, $h^0(\UU_X^\vee)=5$, $h^i(\UU_X^\vee)=0$ for $ i \neq 0$. Assume further that $C$ is generic on $X$. Consider now a smooth twisted cubic $D' \subset X$ such that $D'$ does not intersect $C$ and its ideal sheaf $\II_{D'} \in \Ku(X)$. Note that the generic twisted cubic $D'$ satisfies these conditions. Finally, pick a twisted cubic $D \subset X$ such that $\II_{D} \in \Ku(X)$ and $C$ is an irreducible component of $D$. The existence of such $D$ has been proved in \cite[Lemma 4.24]{JLLZ}. Set
$$Q_1:=\L_{\UU_X}(\II_C), \quad Q_2:=\II_{D}, \quad Q_2':=\II_{D'}.$$ 
Clearly $Q_2$ and $Q_2'$ have the same class in $\NN(\TT)$ and they are not isomorphic to $Q_1$. The following lemma ends the proof of Corollary \ref{cor_uniqueness}.

\begin{lemma}
With the notation above, we have
\begin{align*}
&\hom^1(Q_1,Q_1)=2, \quad \hom^1(Q_2,Q_2)=\hom^1(Q_2', Q_2')=3 ,\\
&\hom(Q_2, Q_1) \neq 0 \quad \hom(Q_1, Q_2'[1]) \neq 0, \quad \hom(Q_2', Q_2[3]) = 0.
\end{align*}
\end{lemma}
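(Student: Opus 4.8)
The plan is to compute every group inside $\D(X)$, where $\Ext^\bullet_{\Ku(X)}$ coincides with $\Ext^\bullet_X$ because $\Ku(X)\subseteq\D(X)$ is full, using Riemann--Roch for the Euler pairings, Serre duality (recall $\omega_X\cong\OO_X(-H)$) for the top and middle groups, and the incidence geometry of the three curves for the off-diagonal non-vanishings. First I would fix the numerical classes. A degree $3$, arithmetic genus $0$ curve (the smooth $D'$, or the nodal conic-plus-line $D=C\cup L$) has $\ch(\II_D)=\ch(\II_{D'})=(1,0,-\tfrac3{10}H^2,\tfrac1{20}H^3)$, so that $[\II_D]=[\II_{D'}]=b_1$; from the triangle \eqref{eq_mutconic} one reads $[Q_1]=[\II_C]-[\UU_X]=-b_1+b_2$. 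Plugging into the intersection matrix gives $\chi(Q_2,Q_2)=\chi(Q_2',Q_2')=b_1^2=-2$ and $\chi(Q_1,Q_1)=(-b_1+b_2)^2=-1$.

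For the diagonal groups, each object is simple: for the ideal sheaves $\Hom(\II_D,\II_D)\hookrightarrow\Hom(\II_D,\OO_X)=\C$, while $\hom^0(Q_1,Q_1)=1$ follows from the stability of $Q_1$, equivalently from the reduction $\Ext^\bullet(Q_1,Q_1)\cong\Ext^\bullet(\II_C,Q_1)$ which is valid since $\Ext^\bullet(\UU_X,Q_1)=0$ as $Q_1\in\Ku(X)_1=\langle\UU_X,\OO_X\rangle^\perp$. The top groups vanish by Serre duality, e.g. $\Ext^3(\II_D,\II_D)=\Hom(\II_D,\II_D(-H))^\ast\subseteq H^0(\OO_X(-H))^\ast=0$, and likewise for $\II_{D'}$ and $Q_1$. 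The middle groups also vanish: for the smooth curve $D'$ this is $\Ext^1(\II_{D'},\II_{D'})\cong H^0(N_{D'/X})$ with $N_{D'/X}$ balanced of degree $1$ (hence $h^0=3$, $h^1=0$, so $\Ext^2=0$), and for $Q_1$ it follows either from Serre-invariance of $\sigma(s,q)$, which makes $Q_1$ and its Serre image stable of equal phase, or from a direct computation through \eqref{eq_mutconic}. Reading the values off the Euler characteristics then gives $\hom^1(Q_1,Q_1)=2$ and $\hom^1(Q_2,Q_2)=\hom^1(Q_2',Q_2')=3$.

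For the off-diagonal terms, $\hom(Q_2',Q_2[3])=\Ext^3(\II_{D'},\II_D)=\Hom(\II_D,\II_{D'}(-H))^\ast\subseteq H^0(\OO_X(-H))^\ast=0$ is immediate. To see $\hom(Q_1,Q_2'[1])\neq0$ I would use the long exact sequence obtained by applying $\Hom(-,\II_{D'})$ to \eqref{eq_mutconic}: since $\II_{D'}\in\Ku(X)_1$ forces $\Ext^\bullet(\UU_X,\II_{D'})=0$, this yields $\Ext^\bullet(Q_1,\II_{D'})\cong\Ext^\bullet(\II_C,\II_{D'})$; as $C\cap D'=\emptyset$ one has $\Hom(\II_C,\II_{D'})=0=\Ext^3(\II_C,\II_{D'})$, while Riemann--Roch gives $\chi(\II_C,\II_{D'})=-1$, forcing $\Ext^1(\II_C,\II_{D'})\neq0$. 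For $\hom(Q_2,Q_1)$ I would use the inclusion $\iota\colon\II_D\hookrightarrow\II_C$ induced by $C\subset D$ together with the projection $\pi\colon\II_C\to Q_1$ of \eqref{eq_mutconic}, and prove that $\pi\circ\iota\neq0$.

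The main obstacle is precisely this last non-vanishing. Writing $D=C\cup L$ with node $p$ gives a short exact sequence $0\to\II_D\xrightarrow{\iota}\II_C\to\OO_L(-p)\to0$, and $\pi\circ\iota=0$ would force $\pi$ to factor through the torsion quotient $\OO_L(-p)$; I expect to exclude this by comparing supports, since the induced surjection $\II_C\twoheadrightarrow\mathcal{H}^0(Q_1)=\mathrm{coker}(\UU_X\to\II_C)$ has target supported on a locus depending only on $C$ and not contained in the extra line $L$. This support-and-rank comparison, together with the verification that the reducible cubic $D$ still satisfies $\Ext^2(\II_D,\II_D)=0$ (where the normal-bundle argument available for smooth curves must be replaced by a direct deformation analysis), are the two places where the specific configuration $C\subset D$, $C\cap D'=\emptyset$ is genuinely used.
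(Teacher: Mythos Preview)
Your overall strategy matches the paper's: compute all $\Ext$ groups in $\D(X)$, use Serre duality and slope stability for the top-degree vanishings, Riemann--Roch for the Euler characteristics, and the incidence geometry of $C\subset D$ and $C\cap D'=\emptyset$ for the off-diagonal non-vanishings. The diagonal computations and $\hom(Q_2',Q_2[3])=0$ are handled in essentially the same way.

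The one place where the paper is noticeably simpler is $\hom(Q_2,Q_1)\neq 0$. Rather than your support analysis of the composite $\II_D\hookrightarrow\II_C\to Q_1$, the paper just computes $\Hom(\II_D,\UU_X)=0$ directly: from the structure sequence of $D\subset X$ and $H^\bullet(\UU_X)=0$ one gets $\Hom(\II_D,\UU_X)\cong\Ext^1(\OO_D,\UU_X)$, and by Serre duality together with $\UU_X^\vee(-H)\cong\UU_X$ this is $H^2(D,\UU_X|_D)^\ast$, which vanishes because $\dim D=1$. The mutation triangle then gives the injection $\Hom(\II_D,\II_C)\hookrightarrow\Hom(\II_D,Q_1)$ immediately, and the inclusion $\II_D\hookrightarrow\II_C$ survives. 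Your factorization-through-$\OO_L(-p)$ argument is logically equivalent (both amount to excluding a lift $\II_D\to\UU_X$), but carrying it out would require identifying the support of $\mathcal{H}^0(Q_1)=\mathrm{coker}(\UU_X\to\II_C)$, i.e.\ the degeneracy locus of a section of $\UU_X^\vee\otimes\II_C$, which is more work than the one-line cohomological vanishing above. Similarly, for $\hom(Q_1,Q_2'[1])$ the paper pushes the reduction one step further than you do, using the structure sequence of $C$ and $C\cap D'=\emptyset$ to get the exact value $\hom(Q_1,Q_2'[1])=h^1(\OO_C(-H))=1$ rather than stopping at a Riemann--Roch inequality.
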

\begin{proof}
Note that $\hom^1(Q_1,Q_1)=\hom^1(\II_C, Q_1)$ as $Q_1 \in \Ku(X)$. By Serre duality, we have $\hom^i(\II_C, \UU_X)=\hom^{3-i}(\UU_X^\vee, \II_C)=\hom^{2-i}(\UU_X^\vee, \OO_C)=h^{2-i}(\UU_X|_C)=0$ for every $i$, as $C$ is a generic conic. Thus $\hom^1(\II_C, Q_1)=\hom^1(\II_C,\II_C)=2$ (see \cite[Lemma 4.2.1(ii), Proposition 4.2.5(iii)]{IP}). With a similar computation as in \cite[Proposition 3.8]{Zhang}, we get $\hom^1(Q_2,Q_2)=\hom^1(Q_2', Q_2')=3$. By Serre duality, $\hom^3(Q_2',Q_2)=\hom(Q_2, Q_2'(-H))=0$ by slope stability of $Q_2$ and $Q_2'(-H)$. Now note that 
$$\hom(Q_2, \UU_X)=\hom(\OO_{D}, \UU_X[1])=\hom(\UU_X, \OO_{D}(-H)[2])=h^2(\UU_X|_{D})=0$$
since $h^i(\UU_X)=0$ for every $i$ and Serre duality. It follows that the space $\Hom(Q_2, \II_C)$ has an injection in $\Hom(Q_2,Q_1)$. Since $C$ is a component of $D$, the former is not $0$ and we get $\hom(Q_2,Q_1) \neq 0$. Finally, we have
$$
\hom(Q_1, Q_2'[1])=\hom(\II_C, \II_{D'}[1])=\hom(\OO_C, \II_{D'}[2])= \hom(\OO_C, \OO_X[2])=h^1(\OO_C(-H))=1,  $$
where in the first and second equality we have used $\II_{D'} \in \Ku(X)$ and in the third the fact that $C \cap D' = \emptyset$. 
\end{proof} 
 
\bibliographystyle{alpha}
\bibliography{references} 

Dipartimento di Matematica ``F.\ Enriques'', Universit\`a degli Studi di Milano, Via Cesare Saldini 50, 20133 Milano, Italy. \\
\indent E-mail address: \texttt{laura.pertusi@unimi.it}\\
\indent URL: \texttt{http://www.mat.unimi.it/users/pertusi} \\

Department of Mathematics, South Hall, Room 6607, University of California, Santa Barbara, CA 93106, USA. \\
\indent E-mail address: \texttt{robinett@math.ucsb.edu}\\

\end{document}